\newtheorem{theorem}{Theorem}[section]
\newtheorem{lemma}[theorem]{Lemma}
\newtheorem{corollary}[theorem]{Corollary}
\newtheorem{proposition}[theorem]{Proposition}
\newtheorem{example}[theorem]{Example}
\newtheorem{problem}[theorem]{Problem}
\newtheorem{claim}[theorem]{Claim}
\theoremstyle{definition}
\newtheorem{definition}[theorem]{Definition}
\newtheorem{remark}[theorem]{Remark}
\numberwithin{equation}{section}
\newcommand{\e}{\varepsilon}
\newcommand{\w}{\omega}
\newcommand{\Ra}{\Rightarrow}
\newcommand{\IR}{\mathbb{R}}
\newcommand{\IN}{\mathbb N}
\newcommand{\ff}{\mathbb{F}}
\newcommand{\Tau}{\mathcal T}
\newcommand{\IF}{\mathbb{F}}
\newcommand{\TTT}{\mathcal{T}}
\newcommand{\FF}{\mathcal{F}}
\newcommand{\M}{\mathcal{M}}
\newcommand{\supp}{\mathrm{supp}}
\newcommand{\spn}{\mathrm{span}}
\newcommand{\pr}{\mathrm{pr}}
\newcommand{\CC}{C_k}
\newcommand{\SM}{{\setminus}}
\title[Locally convex spaces with the strong Gelfand--Phillips property]{Locally convex spaces with \\ the strong Gelfand--Phillips property}
\author{Taras Banakh and Saak Gabriyelyan}
\address{Ivan Franko National University of Lviv (Ukraine) and Jan Kochanowski University in Kielce (Poland)}
\email{t.o.banakh@gmail.com}
\address{Department of Mathematics, Ben-Gurion University of the Negev, Beer-Sheva, P.O. 653, Israel}
\email{saak@math.bgu.ac.il}
\subjclass[2010]{Primary 46A03; Secondary  46E10; 46E15; 54A20}
\keywords{the strong Gelfand--Phillips property, locally convex space, Banach space, function space}
\begin{document}

\begin{abstract}

We introduce the strong Gelfand--Phillips property for locally convex spaces and give several characterizations of this property. We characterize  the strong Gelfand--Phillips property among locally convex spaces admitting a stronger Banach space topology.
If $C_\TTT(X)$ is a space of continuous functions on a Tychonoff space $X$, endowed with a locally convex  topology $\TTT$ between the pointwise topology and the compact-open topology, then: (a) the space $C_\TTT(X)$ has the strong Gelfand--Phillips property iff $X$ contains a  compact countable subspace $K\subseteq X$ of finite scattered height such that for every functionally bounded set $B\subseteq X$ the complement $B\setminus K$ is finite, (b) the subspace $C^b_\TTT(X)$ of $C_\TTT(X)$  consisting of all bounded functions on $X$ has the strong Gelfand--Phillips property iff $X$ is a  compact countable space of finite scattered height.
\end{abstract}

\maketitle


\section{Introduction}


All topological spaces are assumed to be Tychonoff, all locally convex spaces are infinite-dimensional over the field $\mathbb{F}$ of real or complex numbers, and all operators between locally convex spaces are linear and continuous. For a locally convex space  $E$ (lcs for short), we denote by $E'$ the dual space of $E$.  For a bounded subset $B\subseteq E$ and a functional $\chi\in E'$,  we put $\|\chi\|_B:= \sup{(\{ |\chi(x)|:x\in B\cup\{0\})}$.

Let $E$ be a Banach space. A bounded subset $B$ of $E$ is called {\em limited\/} if every weak$^\ast$ null sequence $(\chi_n)_{n\in\w}$ in $E'$ converges uniformly on $B$, that is
$
\lim_n \|\chi_n\|_B =0.
$
A Banach space $E$ is called {\em Gelfand--Phillips} if every limited set in $E$ is precompact, i.e., has compact closure in $E$. 

Gelfand--Phillips Banach spaces were intensively studied by many authors, see for example \cite{Drewnowski,DrewEm,Schlumprecht-Ph,Schlumprecht-C} and more recent articles \cite{CGP,GhLe,Jos}. Since every Banach space is (isometrically) embedded in a $C(K)$-space, it is important to recognize Gelfand--Phillips spaces among Banach spaces of continuous functions in terms of the compact space $K$. Some sufficient conditions on compact spaces $K$ to have Gelfand--Phillips space $C(K)$  were obtained by Drewnowski \cite{Drewnowski}, Drewnowski and Emmanuele \cite{DrewEm}, and by Schlumprecht in \cite{Schlumprecht-Ph,Schlumprecht-C}.

{
In our recent article \cite{BG-GP-Banach}, we obtained several new characterizations of Banach spaces $E$ with the Gelfand--Phillips property using different idea: instead of {\em limited} sets in $E$, we consider  {\em bounded non-precompact} subsets of $E$. This approach  leads us to the following characterization of the Gelfand--Phillips property:
\begin{theorem} \label{t:GP-Banach-char}
A Banach space $E$ is Gelfand--Phillips if and only if for every bounded non-precompact set $B\subseteq E$, there is a weak$^\ast$ null sequence $(\chi_n)_{n\in\w}$ in $E'$ such that $\|\chi_n\|_B \not\to 0$.
\end{theorem}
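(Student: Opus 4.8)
The plan is to prove the equivalence by directly unwinding the definition of a limited set, after observing that the displayed condition is precisely a reformulation of the statement ``$B$ is not limited''. By definition, a bounded set $B\subseteq E$ is limited if and only if $\lim_n\|\chi_n\|_B=0$ for every weak$^\ast$ null sequence $(\chi_n)_{n\in\w}$ in $E'$; therefore $B$ fails to be limited exactly when there is a weak$^\ast$ null sequence $(\chi_n)_{n\in\w}$ in $E'$ with $\|\chi_n\|_B\not\to 0$. With this reformulation in hand, both implications become short.

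For the ``only if'' part, I would assume that $E$ is Gelfand--Phillips and take an arbitrary bounded non-precompact set $B\subseteq E$. Since in a Gelfand--Phillips space every limited set is precompact, $B$ cannot be limited; by the reformulation above this yields a weak$^\ast$ null sequence $(\chi_n)_{n\in\w}$ in $E'$ with $\|\chi_n\|_B\not\to 0$, which is exactly what is required.

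For the ``if'' part, I would assume the displayed condition holds for every bounded non-precompact subset of $E$ and let $L\subseteq E$ be an arbitrary limited set; the goal is to show that $L$ is precompact. A limited set is bounded by definition, so if $L$ were not precompact, the hypothesis would produce a weak$^\ast$ null sequence $(\chi_n)_{n\in\w}$ in $E'$ with $\|\chi_n\|_L\not\to 0$, contradicting the fact that $L$ is limited. Hence $L$ is precompact, and $E$ is Gelfand--Phillips.

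I do not expect a genuine obstacle: the whole content of the statement is the tautological equivalence ``$B$ is not limited'' $\Longleftrightarrow$ ``some weak$^\ast$ null sequence in $E'$ does not converge to zero uniformly on $B$'', together with the trivial remark that limited sets are bounded (so that restricting the second quantifier to \emph{bounded} non-precompact sets loses nothing). The only point deserving a word of care is that ``non-precompact'' in the statement is a priori applied to possibly unbounded sets, but since every limited set is bounded this is harmless. The value of the reformulation lies not in its proof but in the fact that the quantifier over bounded non-precompact sets is more convenient to verify in concrete function spaces, which is where the subsequent applications use it.
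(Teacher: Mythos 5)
Your argument is correct: the theorem is precisely the contrapositive reformulation ``every non-precompact bounded set fails to be limited'' of the definition of the Gelfand--Phillips property, and your two implications, together with the observation that limited sets are bounded by definition, establish it completely. The paper itself does not prove this statement but cites it from the companion article \cite{BG-GP-Banach}; your unwinding is the natural argument and there is nothing to add.
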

The importance of this characterization of Gelfand--Phillips spaces is that it gives not only a new characterization of the Gelfand--Phillips property based on a much more handle class of bounded non-precompact sets than the class of limited sets, but it also allows to introduce and study the following {\em strong} version of that property.

\begin{definition}[\cite{BG-GP-Banach}] \label{def:Banach-strong-GP}
A Banach space $E$ is defined to have the {\em strong Gelfand--Phillips property} if $E$ admits a  weak$^\ast$ null-sequence $(\chi_n)_{n\in\w}$ in $E'$ such that $\|\chi_n\|_B \not\to 0$ for every  bounded non-precompact set $B\subseteq E$. In this case we shall say that $E$ is {\em strongly Gelfand--Phillips}.\qed
\end{definition}

It turns out that the class of strongly Gelfand--Phillips Banach spaces is rather narrow.

\begin{theorem}[\cite{BG-GP-Banach}] \label{t:Banach-strong-GP}
A Banach space $E$ is strongly Gelfand--Phillips if and only if it embeds into $c_0$.
\end{theorem}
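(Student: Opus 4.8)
The plan is to prove the two implications separately; the forward one (strongly Gelfand--Phillips $\Ra$ embeds into $c_0$) is the substantial part. The single external fact I will rely on is the classical description of precompact subsets of $c_0$: \emph{a bounded set $K\subseteq c_0$ is precompact if and only if $\sup_{y\in K}|y_n|\to 0$}.

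For sufficiency, suppose $T\colon E\to c_0$ is an isomorphic embedding. I would let $e_n^\ast\in (c_0)'=\ell_1$ denote the $n$-th coordinate functional and put $\chi_n:=T^\ast e_n^\ast$, so that $\chi_n(x)=(Tx)_n$ and $\|\chi_n\|_B=\sup_{y\in T(B)}|y_n|$ for every bounded $B\subseteq E$. Since $e_n^\ast\to 0$ in the weak$^\ast$ topology of $\ell_1$ and the adjoint $T^\ast$ is weak$^\ast$-to-weak$^\ast$ continuous, $(\chi_n)_{n\in\w}$ is a weak$^\ast$ null sequence in $E'$. If $B\subseteq E$ is bounded and non-precompact, then $T(B)$ is bounded (as $T$ is continuous) and non-precompact (as $T^{-1}$ is continuous on $T(E)$), so the characterization above gives $\|\chi_n\|_B\not\to 0$; hence $(\chi_n)_{n\in\w}$ witnesses the strong Gelfand--Phillips property of $E$.

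For necessity, let $(\chi_n)_{n\in\w}$ be a weak$^\ast$ null sequence in $E'$ with $\|\chi_n\|_B\not\to 0$ for every bounded non-precompact $B\subseteq E$. By the uniform boundedness principle $C:=\sup_n\|\chi_n\|<\infty$, so $T\colon E\to c_0$, $Tx:=(\chi_n(x))_{n\in\w}$, is a well-defined bounded operator (well-defined because $(\chi_n)$ is weak$^\ast$ null, bounded because $\|Tx\|_\infty\le C\|x\|$). The crux is that, read through the $c_0$-characterization of precompactness, the hypothesis on $(\chi_n)$ says precisely: \emph{for every bounded $B\subseteq E$, if $T(B)$ is precompact in $c_0$ then $B$ is precompact in $E$} -- i.e.\ $T$ reflects precompactness of bounded sets. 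Applying this to the closed unit ball $B_N$ of the closed subspace $N:=\ker T=\bigcap_n\ker\chi_n$: since $\chi_n\uhr N=0$ we get $T(B_N)=\{0\}$, hence $B_N$ is compact and, by Riesz's lemma, $N$ is finite-dimensional. Because every $\chi_n$ vanishes on $N$, $T$ factors as $T=\bar T\circ q$ with $q\colon E\to E/N$ the quotient map and $\bar T\colon E/N\to c_0$ bounded and injective, and I claim $\bar T$ is an embedding: if not, pick $\bar x_k\in E/N$ with $\|\bar x_k\|=1$, $\bar T\bar x_k\to 0$, lift to $x_k\in E$ with $\|x_k\|\le 2$; then $\{x_k:k\in\w\}$ is bounded and $T(\{x_k:k\in\w\})=\{\bar T\bar x_k:k\in\w\}$ is precompact (a null sequence), so $\{x_k:k\in\w\}$ is precompact, a subsequence $x_{k_j}$ converges in $E$, whence $q(x_{k_j})$ converges to some $\bar x$ with $\|\bar x\|=1$ and $\bar T\bar x=0$ -- contradicting injectivity. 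Thus $E/N$ embeds into $c_0$; as $N$ is finite-dimensional it is complemented in $E$, so $E$ is isomorphic to $N\oplus(E/N)$, and since $N$ embeds into $c_0$, $E/N$ embeds into $c_0$, and $c_0\cong c_0\oplus c_0$, we conclude that $E$ embeds into $c_0$.

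The main obstacle -- really the only step requiring an idea rather than a routine check -- is the translation of the hypothesis ``$\|\chi_n\|_B\not\to 0$ for all bounded non-precompact $B$'' into the rigidity statement that the operator $T=(\chi_n(\cdot))_{n}$ reflects precompactness of bounded subsets of $E$; once that is in place, finite-dimensionality of $\ker T$, the quotient argument, and the reassembly via complementation of a finite-dimensional subspace are all standard.
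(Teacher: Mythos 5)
Your proof is correct, and it follows essentially the same route as the paper's argument for this result (which the paper itself only cites from \cite{BG-GP-Banach}, but reproduces in generalized form in the proof of Theorem \ref{t:strong-GP}): in both, necessity is obtained by forming $T=(\chi_n(\cdot))_{n\in\w}:E\to c_0$, showing $\ker T$ is finite-dimensional because bounded sets on which all $\chi_n$ vanish must be precompact, and proving the induced map is an embedding via a convergent-subsequence argument against injectivity, while sufficiency pulls back the coordinate functionals $e_n'$ through the embedding. The only cosmetic differences are that you dispose of the finite-dimensional kernel by quotienting and complementation (plus $c_0\oplus c_0\cong c_0$) where the paper augments the sequence $(\chi_n)_{n\in\w}$ with finitely many separating functionals, and that your sufficiency direction invokes the $c_0$-precompactness criterion of Proposition \ref{p:precompact-c0} directly instead of extracting a separated sequence and locating large coordinates.
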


In \cite{BG-GP-Banach} we also obtained a complete characterization of $C(K)$-Banach spaces with the strong Gelfand--Phillips property (for the definitions of scattered height and pseudocompactness, see Section \ref{sec:fs-sGP}).

\begin{theorem}[\cite{BG-GP-Banach}] \label{t:Banach-C(K)-strong-GP}
For an infinite pseudocompact space $K$ the following conditions are equivalent:
\begin{enumerate}
\item[{\rm(i)}] the Banach space $C(K)$ is strongly Gelfand--Phillips;
\item[{\rm(ii)}] the Banach space $C(K)$ is isomorphic to a subspace of $c_0$;
\item[{\rm(iii)}] the Banach space $C(K)$ is isomorphic to $c_0$;
\item[{\rm(iv)}] the space $K$ is countable, compact  and has finite scattered height.
\end{enumerate}
\end{theorem}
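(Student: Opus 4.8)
The plan is to prove the cycle of implications (iv)$\Ra$(iii)$\Ra$(ii)$\Ra$(i)$\Ra$(iv), since (iii)$\Ra$(ii) is trivial and (ii)$\Ra$(i) is immediate from Theorem~\ref{t:Banach-strong-GP}. The heart of the argument is the pair of implications (iv)$\Ra$(iii) and (i)$\Ra$(iv), and the pseudocompactness hypothesis is there precisely to force $K$ to be compact once we know it is countable (a countable pseudocompact space is compact, since a countable Tychonoff space is Lindel\"of, hence paracompact, and a pseudocompact paracompact space is compact).

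For (iv)$\Ra$(iii): if $K$ is countable, compact and of finite scattered height $n$, I would argue by induction on $n$ that $C(K)\cong c_0$. A countable compact space of scattered height $1$ is a convergent sequence with its limit, so $C(K)\cong c$, which is isomorphic to $c_0$; alternatively one invokes the Mazurkiewicz--Sierpi\'nski theorem that a countable compact space is homeomorphic to a countable successor ordinal $\w^k\cdot m+1$, and then quotes the classical fact (Bessaga--Pe\l czy\'nski) that $C(\alpha)\cong c_0$ for every infinite countable compact ordinal $\alpha$. The finite-scattered-height condition is automatic here (every countable compact space is scattered of finite or infinite height; for ordinals below $\w^\w$ the height is finite), so strictly speaking in the countable compact case (iv) is equivalent to just ``$K$ countable and compact'' — but keeping the scattered-height formulation makes the statement parallel to the non-compact theorems later in the paper.

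For (i)$\Ra$(iv): assume $C(K)$ is strongly Gelfand--Phillips. By Theorem~\ref{t:Banach-strong-GP}, $C(K)$ embeds isomorphically into $c_0$. Since $c_0$ contains no isomorphic copy of $C[0,1]$ — indeed $c_0$ contains no copy of any $\ell_\infty(\Gamma)$ or, more to the point, $c_0$ is an Asplund space with separable dual — $C(K)$ must be separable, hence $K$ is metrizable; being pseudocompact and metrizable, $K$ is compact and metrizable. Now $C(K)$ embeds into $c_0$, so $C(K)$ does not contain an isomorphic copy of $C[0,1]$; by Milutin-type theory (more precisely, by the classical dichotomy that for compact metric $K$, either $K$ is countable and $C(K)\cong c_0$, or $K$ is uncountable and $C(K)\cong C[0,1]$), $K$ must be countable. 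A countable compact metric space is scattered, and being compact metric scattered it has countable scattered height; but a compact space of scattered height $\geq\w$ would have, for each $n$, a point whose deleted neighbourhoods accumulate points of height exactly $n$, which produces a copy of a compact space whose $C$-space is not embeddable in $c_0$ — so the height is finite. (More cleanly: a countable compact metric space is homeomorphic to an ordinal $\alpha<\w_1$; its $C$-space embeds in $c_0$ iff it is already isomorphic to $c_0$, which holds for all such $\alpha$, so one instead pins down finiteness of the height directly from the ordinal representation, noting $\alpha<\w^\w$ is equivalent to finite Cantor--Bendixson height.)

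The main obstacle I anticipate is the last step of (i)$\Ra$(iv): extracting ``$K$ countable with finite scattered height'' from ``$C(K)$ embeds into $c_0$''. Getting metrizability and then compactness of $K$ is routine via separability of $c_0$ and pseudocompactness; getting countability of $K$ uses that $C[0,1]$ (equivalently, $C(2^\w)$) does not embed into $c_0$, which is standard but worth citing carefully (e.g.\ via the fact that $c_0$ has separable dual while $C[0,1]^*$ is non-separable, combined with the Milutin dichotomy for compact metric spaces). The finiteness of the scattered height is where I would be most careful: I would phrase it through the Mazurkiewicz--Sierpi\'nski classification of countable compact spaces as ordinals $\w^k\cdot m+1$ and observe that the Cantor--Bendixson height of $\w^k\cdot m+1$ is exactly $k$, hence finite, so in fact every countable compact space automatically has finite scattered height and the condition in (iv) is not an extra restriction but a structural description. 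This makes (iv) genuinely equivalent to ``$K$ is countable and compact'', and closes the loop with (iii).
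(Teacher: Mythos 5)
This theorem is not proved in the present paper: it is quoted from the companion paper \cite{BG-GP-Banach}, so there is no internal proof to compare against. Evaluating your argument on its own terms, the overall architecture (the cycle (iv)$\Rightarrow$(iii)$\Rightarrow$(ii)$\Rightarrow$(i)$\Rightarrow$(iv), with (ii)$\Rightarrow$(i) from Theorem~\ref{t:Banach-strong-GP} and the Milutin dichotomy plus separability of $\ell_1=c_0'$ to get $K$ compact, metrizable and countable) is sound. But it contains one genuine and load-bearing error.

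The error is your repeated claim that the finite-scattered-height condition is automatic, i.e.\ that ``every countable compact space has finite Cantor--Bendixson height'' and that ``$C(\alpha)\cong c_0$ for every infinite countable compact ordinal $\alpha$.'' Both statements are false: the ordinal interval $[0,\omega^\omega]$ is a countable compact metric space whose scattered height is $\omega+1$, and by the actual Bessaga--Pe\l czy\'nski theorem $C[0,\alpha]\cong c_0$ holds precisely for $\omega\le\alpha<\omega^\omega$; for $\alpha\ge\omega^\omega$ the space $C[0,\alpha]$ is not isomorphic to, and does not even embed into, $c_0$ (e.g.\ because its Szlenk index exceeds $\omega=\mathrm{Sz}(c_0)$, or because otherwise Example~\ref{exa:closed-non-sGP} of this very paper, which exhibits a countable compact space of infinite scattered height with $C_p(K)$ not strongly Gelfand--Phillips, would be vacuous). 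Consequently (iv) is \emph{not} equivalent to ``$K$ countable and compact,'' and your proof of (i)$\Rightarrow$(iv) never actually establishes finiteness of the height --- the one place where you address it, the parenthetical ``its $C$-space embeds in $c_0$ iff it is already isomorphic to $c_0$, which holds for all such $\alpha$,'' is exactly the false statement. The implication (iv)$\Rightarrow$(iii) survives once corrected, since finite height $k$ forces $K\cong[0,\omega^{k-1}\cdot m]$ with $\omega^{k-1}\cdot m<\omega^\omega$. To repair (i)$\Rightarrow$(iv) you must show that a countable compact metric $K$ with $C(K)$ embeddable into $c_0$ has height below $\omega$; the standard route is that if the height were infinite then $K$ would contain a closed (hence, by Tietze extension, complementedly $C$-embedded) copy of $[0,\omega^\omega]$, and $C[0,\omega^\omega]$ does not embed into $c_0$ by the Szlenk-index (or Bessaga--Pe\l czy\'nski) obstruction.
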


The aforementioned results and discussion naturally leads us to problem of extending the notion of a (strongly) Gelfand--Phillips space to the class of all locally convex space. It turns out that such an extension
cannot be done directly and needs some preliminary notions defined below.

Let $E$ be a locally convex space. We say that a subset $B\subseteq E$ is {\em barrel-bounded} if for any barrel $U\subseteq E$ there is an $n\in\w$ such that $B\subseteq nU$. A subset $A$ of $E$ is called {\em barrel-precompact} if for any barrel  $B\subseteq E$ there exists a finite set $F\subseteq E$ such that $A\subseteq F+B$. Clearly, each barrel-precompact set is barrel-bounded. It is easy to see that if $E$ is a Banach space, then a subset $B$ of $E$ is  barrel-bounded but not  barrel-precompact if and only if $B$ is a bounded non-precompact set of $E$.

Analogously to the case of Banach spaces we defined in \cite{BG-GP-lcs} the notion of limited sets and Gelfand--Phillips spaces as follows. A bounded subset $A$ of a locally convex space $E$ is called {\em limited} if every weak$^\ast$ null sequence $\{\chi_n\}_{n\in\w}$ in $E'$ converges uniformly on $A$, that is, $\lim_{n} \|\chi_n\|_A=0.$ A locally convex space $E$ is called a {\em Gelfand--Phillips space} or has the {\em Gelfand--Phillips property} if every limited subset of $E$ is barrel-precompact. In \cite{BG-GP-lcs} we obtain the following characterization of Gelfand--Phillips spaces which is similar to Theorem \ref{t:GP-Banach-char}.

\begin{theorem}[\cite{BG-GP-lcs}] \label{t:GP-lcs-char}
A locally convex space $E$ is Gelfand--Phillips if and only if for every barrel-bounded non-barrel-precompact set $B\subseteq E$, there is a weak$^\ast$ null sequence $(\chi_n)_{n\in\w}$ in $E'$ such that $\|\chi_n\|_B \not\to 0$.
\end{theorem}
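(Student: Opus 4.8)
The plan is to prove both implications by contradiction and to reduce everything to a single elementary observation: in any locally convex space, every limited set is barrel-bounded (hence bounded). I would first record the routine fact that a barrel-bounded set $B\subseteq E$ is bounded, since every neighbourhood of $0$ contains a closed convex balanced neighbourhood, and such a set is a barrel; thus a set absorbed by all barrels is absorbed by all neighbourhoods of $0$.

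For the implication ($\Rightarrow$), assume $E$ is Gelfand--Phillips and let $B\subseteq E$ be barrel-bounded but not barrel-precompact. If, contrary to the claim, $\|\chi_n\|_B\to 0$ for \emph{every} weak$^\ast$ null sequence $(\chi_n)_{n\in\w}$ in $E'$, then --- because $B$ is barrel-bounded and hence bounded --- this is exactly the statement that $B$ is limited. The Gelfand--Phillips property then forces $B$ to be barrel-precompact, a contradiction. So the required sequence exists.

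For the implication ($\Leftarrow$), assume the displayed condition and let $A\subseteq E$ be limited; we must show $A$ is barrel-precompact. The crucial step is the claim that $A$ is barrel-bounded. To see this, suppose not: there is a barrel $U\subseteq E$ with $A\not\subseteq nU$ for all $n\in\w$, so we may choose $a_n\in A$ with $\tfrac1n a_n\notin U$. Since $U$ is closed, convex and balanced, Hahn--Banach separation provides $\chi_n\in U^{\circ}$ (so $|\chi_n|\le 1$ on $U$) with $|\chi_n(a_n)|>n$. Set $\psi_n:=\tfrac1n\chi_n$. Given $x\in E$, pick $m$ with $x\in mU$ (possible as $U$ is absorbing); then $|\psi_n(x)|\le m/n\to 0$, so $(\psi_n)_{n\in\w}$ is weak$^\ast$ null. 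But $\|\psi_n\|_A\ge|\psi_n(a_n)|=|\chi_n(a_n)|/n>1$, contradicting that $A$ is limited. Hence $A$ is barrel-bounded. Now if $A$ were not barrel-precompact, the hypothesis would yield a weak$^\ast$ null sequence $(\chi_n)_{n\in\w}$ in $E'$ with $\|\chi_n\|_A\not\to 0$, again contradicting the limitedness of $A$. Therefore $A$ is barrel-precompact and $E$ is Gelfand--Phillips.

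The argument is short, and I do not expect a serious obstacle; the only genuine subtlety --- the one point where a general locally convex space behaves differently from a Banach space --- is that ``bounded'' and ``barrel-bounded'' no longer coincide, so one must verify that limited sets land in the smaller class of barrel-bounded sets. That verification (the Hahn--Banach step above) is exactly what makes the notions of ``bounded non-precompact'' and ``barrel-bounded non-barrel-precompact'' align correctly, and with it the proof is a faithful transcription of the Banach-space argument behind Theorem \ref{t:GP-Banach-char}.
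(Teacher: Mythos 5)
The paper does not actually prove this statement -- it is quoted from the companion preprint [BG-GP-lcs] -- so there is no internal proof to compare against. Judged on its own, your argument is correct and complete. The forward direction is the routine contrapositive reading of the definition, using that barrel-bounded sets are bounded (every closed absolutely convex neighbourhood of zero is a barrel, so absorption by all barrels implies absorption by all neighbourhoods). The one point that genuinely needs an argument in the reverse direction is exactly the one you isolate: since a barrel need not be a neighbourhood of zero in a non-barrelled space, "bounded" does not imply "barrel-bounded", so you must show a limited set is barrel-bounded before the hypothesis can be applied to it. Your Hahn--Banach step does this correctly: if a barrel $U$ fails to absorb $A$, pick $a_n\in A$ with $\tfrac1n a_n\notin U$, separate to get $\chi_n\in U^{\circ}$ with $|\chi_n(a_n)|>n$, and observe that $\psi_n=\tfrac1n\chi_n$ is weak$^\ast$ null (because $U$ is absorbing, so $|\chi_n(x)|$ is bounded uniformly in $n$ for each fixed $x$) while $\|\psi_n\|_A>1$, contradicting limitedness. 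With barrel-boundedness in hand, the rest is immediate. The only cosmetic remark is that the index should start at $n=1$ rather than $n=0$ in that step.
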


The main goal of  this article is to define and study the strong Gelfand--Phillips property in the class of all locally convex spaces, in particular, in various classes of function spaces.

The discussion above shows that a Banach space $E$ has the  strong Gelfand--Phillips property if and only if it satisfies the following definition.

\begin{definition} \label{def:GP-strong}
A locally convex space $E$ is said to have the {\em strong  Gelfand--Phillips  property} ({\em strong $(GP)$ property}) if $E$ admits a  weak$^\ast$ null sequence $\{\chi_n\}_{n\in\w}$ in $E'$ such that $\|\chi_n\|_B \not\to 0$ for every barrel-bounded set $B\subseteq E$ which is not  barrel-precompact. Locally convex spaces with the strong Gelfan--Phillips property are called {\em strongly Gelfand--Phillips}.
\end{definition}

Our motivation to study of strongly Gelfand--Phillips spaces is explained also by the following. In \cite{BG-JNP} we introduce and study the class of Josefson--Nissenzweig locally convex spaces. For a locally convex space $E$, we denote by $\beta^\ast(E',E)$ the topology on $E'$ whose neighborhood base at zero consits of the polars of barrel-bounded subsets of $E$.

\begin{definition}[\cite{BG-JNP}] \label{def:JNP}
A locally convex space $E$ is said to have the {\em Jossefson--Nissenzweig property} (briefly, the JNP) if the identity map $\big(E',\sigma(E',E)\big) \to \big(E',\beta^\ast(E',E)\big)$ is  not sequentially continuous.\qed
\end{definition}
\noindent By the classical  Jossefson--Nissenzweig theorem every Banach space has the JNP, however, by \cite{BLV} (see also \cite{BG-JNP}), a  Fr\'{e}chet space has the JNP if and only if it is not Montel.

It is clear that for a locally convex space containing a barrel-bounded but not barrel-precompact set we have the implications
\[
\xymatrix@C=20pt{ \mbox{strong $(GP)$} \ar@{=>}[r] & \mbox{$(GP)$} \ar@{=>}[r] &\mbox{JNP} }
\]
and, by \cite{BG-GP-Banach}, none of these implications is reversible even in the class of Banach spaces.

Our study of the strongly  Gelfand--Phillips spaces is also motivated by the following very natural relationship between he notions introduced above in the simplest case of Banach spaces. Let $E$ be a Banach space. Then the Jossefson--Nissenzweig theorem states that there exists a weak$^\ast$ null sequence $\{\chi_n\}_{n\in\w}$ in $E'$ such that $\|\chi_n\|_{B_E}\not\to 0$. Taking into account that $B_E$ is trivially a barrell-bounded, non barrel-precompact subset of $E$ and keeping in mind the general case of locally convex spaces, one can naturally ask: When the sequence $\{\chi_n\}_{n\in\w}$ is {\em universal} by modulo  the Jossefson--Nissenzweig theorem in the sense that $\|\chi_n\|_{B}\not\to 0$ for {\em every} barrel-bounded, non-precompact subset $B$ of $E$?  Therefore we can consider the strong $(GP)$ property as a {\em universal} version of the Jossefson--Nissenzweig property.
On the other hand, we can also ask: When for a {\em chosen} barrel-bounded, non-barrel-precompact subset of $E$ there exists a weak$^\ast$ null sequence $\{\chi_n\}_{n\in\w}$ in $E'$ (now it depends also on $B$) such that  $\|\chi_n\|_{B}\not\to 0$? Therefore the  $(GP)$ property can be considered as a version of the Jossefson--Nissenzweig property which holds {\em everywhere}.

We can now describe the content of the paper.}
In Section \ref{sec:lcs-sGP} we characterize strongly  Gelfand--Phillips spaces, see Theorem \ref{t:char-sGP}. The study of Banach spaces endowed with the weak topology is one of the most important directions in Banach Space Theory, see for example \cite{Corson,edgar,fabian-10}. So locally convex spaces $E$ which admits a stronger Banach space topology is of independent interest. We shall say that a locally convex space $E$ is {\em $\beta$-Banach} if the space $E$ endowed with the topology whose neighborhood base at zero consists of barrels is a Banach space. In Theorem \ref{t:strong-GP} we characterize $\beta$-Banach spaces which have the strong  Gelfand--Phillips  property. As a corollary we obtain that for no Banach space $E$, the dual space $E'_{w^\ast}$ endowed with the weak$^\ast$ topology is strongly Gelfand--Phillips. We show that the class of strongly  Gelfand--Phillips spaces is closed under finite products, but even the countable power of a strongly  Gelfand--Phillips Banach space can fails to have the strong  Gelfand--Phillips property, see Proposition \ref{p:finite-prod-sGP} and Example \ref{exa:c-product-sGP}. In Example \ref{exa:b-feral-normed}  we show that the completion of a  strongly  Gelfand--Phillips  normed space can fail to be  strongly  Gelfand--Phillips, as well as a complete  strongly  Gelfand--Phillips space may contain a dense subspace without the  strong Gelfand--Phillips property, see Example \ref{exa:dense-subspace-sGP}.  As an application we present a characterization of Jossefson--Nissenzweig spaces in the terms of operators to spaces with the (strong) Gelfand--Phillips property, see Theorem \ref{t:JNP-characterization}.

For a Tychonoff space $X$, let $C(X)$ be the space of all continuous functions on $X$. Denote by $C^b(X)$ the subspace of $C(X)$ consisting of all bounded functions. Locally convex properties of the spaces $C(X)$ and $C^b(X)$ endowed with the topologies of uniform convergence on various families of sets in $X$ are widely studied in Functional Analysis, we refer the reader to the books \cite{Dales-Lau,kak,Schmets-76}. In Section \ref{sec:fs-sGP}  using general Theorem \ref{t:main3}, we obtain  complete characterizations of spaces $X$ for which the spaces $C(X)$ and $C^b(X)$ endowed with one of those topologies have  the strong  Gelfand--Phillips  property, see Theorem \ref{t:sGP-C} and Theorem \ref{t:Cb-sGP}, respectively.




\section{Locally convex spaces with the strong Gelfand--Phillips property} \label{sec:lcs-sGP}


In this section we characterize locally convex spaces with the strong Gelfand--Phillips property. First we recall some definitions.
Let $E$ be a locally convex space.  We denote by $E_w$ the space $E$ endowed with the weak topology and by $E'_{w^\ast}$ the dual space $E'$ endowed with the weak$^\ast$ topology. A subset $A$ of $E$ is called {\em precompact} if for any neighborhood $V$ of zero in $E$ there is a finite subset $F\subseteq E$ such that $A\subseteq F+V$. Let $U$ be a neighborhood of zero in $E$. A subset $A$ of $E$ is called {\em $U$-separated} if  $a-a'\not\in U$ for any distinct $a,a'\in A$. A subset of $E$ is {\em uniformly discrete} if it is  $U$-separated for some neighborhood $U$ of zero in $E$.
We shall use the following result which follows from  Theorem 5 of \cite{BGP}.
\begin{proposition} \label{p:lcs-precompact}
A subset $A$ of a locally convex space $E$ is precompact if and only if every uniformly discrete subset of $A$ is finite.
\end{proposition}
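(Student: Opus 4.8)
The plan is to prove both implications directly from the definitions; this is essentially what one extracts from Theorem 5 of \cite{BGP}, so an alternative would be to simply quote that result.

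First I would dispatch the easy direction: if $A$ is precompact and $D\subseteq A$ is uniformly discrete, then $D$ is finite. Fix a neighborhood $U$ of zero witnessing that $D$ is $U$-separated, and pick a symmetric neighborhood $V$ of zero with $V+V\subseteq U$. Precompactness gives a finite set $F\subseteq E$ with $A\subseteq F+V$. The point is that each translate $x+V$ with $x\in F$ contains at most one element of $D$: if $d,d'\in(x+V)\cap D$, then $d-d'\in V-V=V+V\subseteq U$, so $d=d'$ by $U$-separation. Hence $|D|\le|F|<\infty$.

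For the converse, assume every uniformly discrete subset of $A$ is finite; I must show $A$ is precompact. Fix an arbitrary neighborhood $U$ of zero and choose a symmetric neighborhood $V$ of zero with $V\subseteq U$. By Zorn's lemma select a maximal $V$-separated subset $D\subseteq A$. Then $D$ is uniformly discrete, hence finite by hypothesis. By maximality, for every $a\in A\setminus D$ the set $D\cup\{a\}$ fails to be $V$-separated, so there is $d\in D$ with $a-d\in V$ (using the symmetry of $V$). Therefore $A\subseteq D+V\subseteq D+U$, and since $D$ is finite this exhibits $A$ as precompact.

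The argument is short, and the only non-elementary ingredient is the appeal to Zorn's lemma in the converse — one could instead build the maximal $V$-separated set by transfinite recursion, the recursion terminating precisely because the set cannot acquire infinitely many points. I do not expect any genuine obstacle; the only thing to watch is the routine bookkeeping with symmetric neighborhoods and the inclusion $V-V\subseteq U$.
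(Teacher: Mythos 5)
Your proof is correct. Note that the paper does not actually prove this proposition: it simply cites Theorem~5 of \cite{BGP}, so there is no in-text argument to compare against. Your two implications are the standard ones and both are sound: the forward direction via a symmetric $V$ with $V+V\subseteq U$ and the pigeonhole count $|D|\le|F|$, and the converse via a maximal $V$-separated subset (Zorn applies because $V$-separation is a pairwise condition, hence preserved under unions of chains), whose finiteness by hypothesis and maximality yield $A\subseteq D+V\subseteq D+U$. The only bookkeeping worth double-checking is the one you already flagged, namely that the symmetry of $V$ converts ``$d-a\in V$ or $a-d\in V$'' into $a\in d+V$; that is handled correctly. Your self-contained argument is a perfectly good substitute for the citation.
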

Let us recall also the following well-known description of precompact subsets of the Banach space $c_0$, where $e'_n$ is the $n$th coordinate functional of $c_0$.

\begin{proposition} \label{p:precompact-c0}
A subset $A$ of $c_0$ is precompact if and only if $\lim\limits_{n\to\infty}\|e'_n\|_A=0$.
\end{proposition}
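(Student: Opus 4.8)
The plan is to first rewrite the condition in concrete terms: for $A\subseteq c_0$ and $n\in\w$ one has $\|e'_n\|_A=\sup\{|x(n)|:x\in A\cup\{0\}\}$, so the displayed equality means precisely that $\sup_{x\in A}|x(n)|\to 0$ as $n\to\infty$. I would also record the trivial observation that this condition makes the sequence $\big(\|e'_n\|_A\big)_{n}$ bounded, hence makes $A$ itself norm-bounded, and use this freely in the nontrivial implication.

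For the ``only if'' implication I would fix $\e>0$, use precompactness of $A$ to choose a finite set $F\subseteq c_0$ with $A\subseteq F+\tfrac\e2\ball$ (where $\ball$ is the closed unit ball of $c_0$), and then use that each of the finitely many points of $F$ lies in $c_0$ to find $N\in\w$ with $|y(n)|<\tfrac\e2$ for all $y\in F$ and all $n\ge N$. For $x\in A$, picking $y\in F$ with $\|x-y\|_\infty\le\tfrac\e2$, this yields $|x(n)|<\e$ for $n\ge N$, i.e.\ $\|e'_n\|_A\le\e$ for all $n\ge N$; since $\e$ was arbitrary, $\|e'_n\|_A\to 0$.

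For the ``if'' implication I would invoke Proposition~\ref{p:lcs-precompact} and show that every uniformly discrete $D\subseteq A$ is finite. If $D$ is $U$-separated with $\delta\ball\subseteq U$, then $\|d-d'\|_\infty\ge\delta$ for all distinct $d,d'\in D$; choosing $N$ with $\|e'_n\|_A$ so small (say $<\delta/3$) that $|d(n)-d'(n)|<\delta$ for all $n\ge N$ forces $\max_{n<N}|d(n)-d'(n)|\ge\delta$. Hence the coordinate projection $d\mapsto(d(n))_{n<N}$ is injective on $D$ and maps it onto a $\delta$-separated subset of a bounded subset of $\IF^N$; such a set is finite, because bounded sets in the finite-dimensional space $\IF^N$ are precompact. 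So $D$ is finite, and $A$ is precompact. Alternatively, one can avoid Proposition~\ref{p:lcs-precompact} and argue directly: given $\e>0$, pick $N$ with $\|e'_n\|_A\le\e$ for $n\ge N$, take a finite $\e$-net of the bounded ``head'' set $\{(x(n))_{n<N}:x\in A\}\subseteq\IF^N$, lift it to a finite subset of $A$, and check that this set is a $2\e$-net for $A$.

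There is no substantial obstacle here: the statement is classical, and the only point requiring a little care is the split into the finitely many ``head'' coordinates $n<N$, handled by finite-dimensionality of $\IF^N$, and the ``tail'' coordinates $n\ge N$, handled by the hypothesis $\|e'_n\|_A\to0$; boundedness of $A$, which is needed so that the head set is bounded, comes for free from that same hypothesis.
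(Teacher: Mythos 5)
The paper offers no proof of this proposition: it is stated as a ``well-known description of precompact subsets of the Banach space $c_0$'' immediately after Proposition~\ref{p:lcs-precompact}, so there is no argument of the authors' to compare yours against. Your proof is correct. The ``only if'' direction via a finite $\tfrac{\e}{2}$-net and the vanishing of the finitely many net points' tails is the standard argument, and both of your routes for the ``if'' direction work: the one through Proposition~\ref{p:lcs-precompact} (showing every $U$-separated subset of $A$ projects injectively onto a $\delta$-separated, bounded, hence finite subset of $\IF^N$) fits naturally with how the paper sets things up, and the direct construction of a $2\e$-net from a net of the head set in $\IF^N$ is equally fine. The one point to phrase more carefully is your claim that norm-boundedness of $A$ ``comes for free'' from $\lim_{n\to\infty}\|e'_n\|_A=0$: this follows only if every term $\|e'_n\|_A$ is a finite real number, since the limit condition constrains just the tail; a set unbounded in a single coordinate (say all scalar multiples of one basis vector) satisfies the displayed limit vacuously yet is not precompact. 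In the paper's conventions $\|\chi\|_A$ is introduced only for bounded sets $A$, so the proposition is implicitly restricted to bounded subsets and the issue evaporates, but you should either invoke that convention explicitly or state that all terms of the sequence are assumed finite.
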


For a locally compact space $X$, let
\[
C_0(X):=\{f\in C(X):\mbox{ the set }\{x\in X:|f(x)|\ge \e\}\mbox{ is compact for each } \e>0\}
\]
be the space of $\IF$-valued continuous functions on $X$  tending to zero at the infinity. The space $C_0(X)$ endowed with the sup-norm $\|f\|_{X} :=\sup\{ |f(x)|: x\in X\}$ is a Banach space. We denote by $C_p^0(X)$ the space $C_0(X)$ endowed with the subspace topology inherited from $C_p(X)$, where $C_p(X)$ denotes the space of $\mathbb{F}$-valued continuous functions on $X$ endowed with the pointwise topology.

Let $E$ be a locally convex space. Denote by $E_\beta$ the space $E$ endowed with the locally convex topology $\beta(E,E')$ whose neighborhood base at zero consists of barrels. Observe that a subset $A$ of $E$ is barrel-precompact if and only if $A$ is precompact in $E_\beta$, and $E$ is barrelled if and only if $E=E_\beta$.
Recall that a subset $A$ of $E$ is defined to be {\em barrel-precompact} if for any barrel  $B\subseteq E$ there exists a finite set $F\subseteq E$ such that $A\subseteq F+B$. In other words, $A$ is  barrel-precompact if and only if $A$ is precompact in the topology $\beta(E,E')$.

In the next theorem we characterize strongly Gelfand--Phillips spaces.
\begin{theorem} \label{t:char-sGP}
For a locally convex space $E$ the following assertions are equivalent:
\begin{enumerate}
\item[{\rm (i)}] $E$ has the strong Gelfand--Phillips property.
\item[{\rm (ii)}] There exists a null sequence $\{\chi_n\}_{n\in\w}$ in $E'_{w^\ast}$ such that for any infinite barrel-bounded, barrel-separated subset $D$ of $E$, there are  an infinite   subset $D_0 \subseteq D$ and $\delta>0$ such that
    \[
    \sup_{n\in \w}|\chi_n(x-x')|>\delta \; \mbox{ for every distinct $x,x'\in D_0$}.
    \]
\item[{\rm (iii)}] There is  a continuous operator $T:E\to C_p^0(\w)$ such that for any infinite barrel-bounded barrel-separated set $D$ in $E$, the set $T(D)$ is not  precompact in the Banach space $c_0$.
\end{enumerate}
\end{theorem}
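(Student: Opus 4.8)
The plan is to prove the cycle of implications (i)$\Rightarrow$(iii)$\Rightarrow$(ii)$\Rightarrow$(i), which lets each step reuse the concrete coordinate description of precompact sets in $c_0$ from Proposition~\ref{p:precompact-c0}.

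\textbf{From (i) to (iii).} Assume $E$ is strongly Gelfand--Phillips, witnessed by a weak$^\ast$ null sequence $\{\chi_n\}_{n\in\w}$ in $E'$. Define $T\colon E\to\IF^\w$ by $T(x):=(\chi_n(x))_{n\in\w}$. Weak$^\ast$ convergence $\chi_n\to 0$ says exactly that $T(x)\in c_0$ for each $x$, and that $T$ is continuous as a map into $C_p^0(\w)$ (each coordinate $x\mapsto\chi_n(x)$ is continuous, and pointwise topology on the target is the initial topology of the coordinates). Now let $D\subseteq E$ be infinite, barrel-bounded and barrel-separated; I must check $T(D)$ is non-precompact in $c_0$. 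A barrel-separated set is in particular not barrel-precompact (its closure in $E_\beta$ contains an infinite uniformly discrete set, cf.\ Proposition~\ref{p:lcs-precompact}), so by Definition~\ref{def:GP-strong} we have $\|\chi_n\|_D\not\to 0$, i.e.\ $\limsup_n\sup_{x\in D}|\chi_n(x)|>0$, which by Proposition~\ref{p:precompact-c0} is precisely the statement that $T(D)$ is not precompact in $c_0$. The only mild subtlety here is relating ``barrel-bounded but not barrel-precompact'' to ``barrel-bounded and barrel-separated,'' but Proposition~\ref{p:lcs-precompact} supplies an infinite barrel-separated subset inside any non-barrel-precompact barrel-bounded set, and running the argument on that subset and then passing back up is harmless.

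\textbf{From (iii) to (ii).} Given the operator $T=(\chi_n)_{n\in\w}\colon E\to C_p^0(\w)$, set $\chi_n:=e'_n\circ T\in E'$; continuity of $T$ into $C_p^0(\w)$ forces $\chi_n\to 0$ in $E'_{w^\ast}$ (convergence in $C_p^0(\w)$ means coordinatewise, hence $\chi_n(x)=T(x)_n\to 0$ for every $x$). Let $D\subseteq E$ be infinite, barrel-bounded and barrel-separated. By hypothesis $T(D)$ is non-precompact in $c_0$, so by Proposition~\ref{p:precompact-c0} there are $\delta_0>0$ and infinitely many $n$ with $\sup_{x\in D}|\chi_n(x)|>2\delta_0$. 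The remaining task is to extract an infinite $D_0\subseteq D$ on which $\sup_n|\chi_n(x-x')|>\delta$ for all distinct $x,x'$, with $\delta:=\delta_0$. This is a standard gliding-hump / Ramsey-type selection: choose inductively points $x_k\in D$ and indices $n_k$ with $|\chi_{n_k}(x_k)|>2\delta_0$, and simultaneously thin out so that on the (finite) initial block of coordinates $n_0,\dots,n_{k-1}$ already chosen the new point $x_k$ is close to a fixed limiting value (possible since $D$ is barrel-bounded, hence bounded, so $(\chi_{n_j}(x))_{x\in D}$ is a bounded scalar set for each fixed $j$ and we may pass to a subsequence convergent in finitely many coordinates at a time, a diagonal argument). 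Then for $k>\ell$ the $n_k$-th coordinate separates $x_k$ from $x_\ell$: $|\chi_{n_k}(x_k-x_\ell)|\ge|\chi_{n_k}(x_k)|-|\chi_{n_k}(x_\ell)|>2\delta_0-\delta_0=\delta_0$, once the thinning guarantees $|\chi_{n_k}(x_\ell)|<\delta_0$ (which needs $n_k$ chosen large relative to $\ell$, arranged in the induction). I expect this diagonal extraction to be the main technical obstacle — one has to interleave the choice of separating coordinates with the thinning that makes earlier coordinates asymptotically constant on the tail, and be careful that barrel-boundedness of $D$ is exactly what makes the scalar sequences bounded so that convergent subsequences exist.

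\textbf{From (ii) to (i).} Suppose (ii) holds with sequence $\{\chi_n\}_{n\in\w}$ and take any barrel-bounded $B\subseteq E$ that is not barrel-precompact. By Proposition~\ref{p:lcs-precompact} applied in $E_\beta$, $B$ contains an infinite uniformly discrete (hence barrel-separated) subset $D$; $D$ is still barrel-bounded since $D\subseteq B$. Apply (ii) to get an infinite $D_0\subseteq D$ and $\delta>0$ with $\sup_n|\chi_n(x-x')|>\delta$ for distinct $x,x'\in D_0$. Fix distinct $x,x'\in D_0$: there is $n$ with $|\chi_n(x-x')|>\delta$, hence $\max(|\chi_n(x)|,|\chi_n(x')|)>\delta/2$, so $\|\chi_n\|_{D_0}>\delta/2$. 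If we had $\|\chi_n\|_B\to 0$ then also $\|\chi_n\|_{D_0}\to 0$, and then for large $n$ and \emph{any} two distinct $x,x'\in D_0$ the inequality $|\chi_n(x-x')|>\delta$ would fail — but $D_0$ is infinite, so fixing one pair and letting $n$ range gives a contradiction provided we note that the pair realizing $\sup_n|\chi_n(x-x')|>\delta$ may depend on $n$; to make this airtight one observes that for \emph{each} distinct pair $x,x'\in D_0$ the supremum over $n$ exceeds $\delta$, so $\sup_{n}\|\chi_n\|_{D_0}\ge\delta/2$ in fact $\limsup_n\|\chi_n\|_{D_0}\ge\delta/2$ because on a fixed infinite $D_0$ only finitely many coordinates can be simultaneously small while still separating infinitely many pairs — spelled out: if $\|\chi_n\|_{D_0}<\delta/2$ for all $n\ge N$, pick distinct $x,x'\in D_0$; the witnessing index $n$ with $|\chi_n(x-x')|>\delta$ must satisfy $n<N$, so all pairs are separated by one of the finitely many functionals $\chi_0,\dots,\chi_{N-1}$, forcing $D_0$ to be finite, a contradiction. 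Hence $\|\chi_n\|_B\not\to 0$, and Definition~\ref{def:GP-strong} gives that $E$ is strongly Gelfand--Phillips.
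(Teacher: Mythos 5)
Your proof is correct in substance but closes the cycle in the opposite direction from the paper: you prove (i)$\Rightarrow$(iii)$\Rightarrow$(ii)$\Rightarrow$(i), whereas the paper proves (i)$\Rightarrow$(ii)$\Rightarrow$(iii)$\Rightarrow$(i). The ingredients are the same: the gliding-hump extraction (which the paper performs inside (i)$\Rightarrow$(ii) and you perform inside (iii)$\Rightarrow$(ii), with identical mechanics --- choose $n_k$ so large that the finitely many points already selected satisfy $|\chi_{n_k}(x_\ell)|<\delta_0$, then pick $x_k$ with $|\chi_{n_k}(x_k)|>2\delta_0$; your worries about diagonal subsequences are unnecessary, since weak$^*$ nullity alone controls finitely many points at a time), and the translation between $\|\chi_n\|_D\not\to 0$ and non-precompactness of $T(D)$ via Proposition~\ref{p:precompact-c0}. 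The genuinely new step in your arrangement is (ii)$\Rightarrow$(i), which the paper never needs because its closing implication (iii)$\Rightarrow$(i) reads $\|\chi_n\|_B\not\to 0$ directly off Proposition~\ref{p:precompact-c0}. Your (ii)$\Rightarrow$(i) does work, but the final inference --- that all pairs of $D_0$ being $\delta$-separated by one of the finitely many functionals $\chi_0,\dots,\chi_{N-1}$ forces $D_0$ to be finite --- is not true as literally stated: $\delta\ZZ^N$ is an infinite $\delta$-separated subset of $\IF^N$. You must also record that each $\chi_n$ is bounded on $D_0$; this is immediate, since $\{x\in E:|\chi_n(x)|\le 1\}$ is a barrel and $B\supseteq D_0$ is barrel-bounded, whence $\|\chi_n\|_{D_0}<\infty$. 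Then the image of $D_0$ under $x\mapsto(\chi_0(x),\dots,\chi_{N-1}(x))$ is an infinite, bounded, $\delta$-separated subset of $\IF^N$, which is the contradiction you want. With that sentence added your argument is complete; the paper's ordering of the implications simply avoids having to make it.
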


\begin{proof}
(i)$\Rightarrow$(ii) Let $S=\{\chi_n\}_{n\in\w}$ be a null sequence in $E'_{w^\ast}$ witnessing the strong Gelfand--Phillips property. We show that it satisfies (ii). Indeed, let $D$ be an infinite barrel-bounded, barrel-separated subset of $E$. Then,  by Proposition \ref{p:lcs-precompact}, $D$ is not barrel-precompact. By the choice of $S$, we have $\|\chi_n\|_D \not\to 0$. Choose $\delta>0$ and a strictly increasing sequence $(n_k)_{k\in\w}\in\w$ such that $\|\chi_{n_k}\|_D >2\delta$ for every $k\in\w$. Now, choose arbitrarily $x_0\in D$ such that $|\chi_{n_0}(x_0)|>2\delta$ and set $k_0:=0$. Since $S$ is a null sequence in $E'_{w^\ast}$, by induction on $i>0$, we can find $k_i>k_{i-1}$ and $x_i\in D$ such that
\[
\big|\chi_{n_{k_j}}(x_i)\big|<\delta \;\; \mbox{ and }\;\; \big|\chi_{n_{k_j}}(x_j)\big|>2\delta \; \mbox{ for every } \; 0\leq i<j.
\]
Set $D_0:=\{x_i\}_{i\in\w}$. Then, by construction, we have
\[
\sup_{n\in \w}|\chi_n(x_i-x_j)|\geq  \big|\chi_{n_{k_j}}(x_i-x_j)\big|\geq \big|\chi_{n_{k_j}}(x_j)\big|-\big|\chi_{n_{k_j}}(x_i)\big|  >\delta
\]
for every distinct $i<j$ in $\w$, as desired.
\smallskip

(ii)$\Rightarrow$(iii) Let $S=\{\chi_n\}_{n\in\w}$ be a null sequence in $E'_{w^\ast}$ satisfying (ii). Then the operator
\[
T:E\to C_p^0(\w), \quad T(x):=\big( \chi_n(x)\big)_{n\in\w},
\]
is well-defined and continuous. Now, let $D$ be an  infinite barrel-bounded barrel-separated set in $E$. The choice of the sequence $S$ implies that there are  a sequence   $D_0=\{x_n\}_{n\in\w} \subseteq D$ and $\delta>0$ such that
\[
\sup_{n\in \w}|\chi_n(x_i-x_j)|>\delta \; \mbox{ for every distinct $i,j\in \w$}.
\]
But this means that $\|\chi_n\|_{D_0}\not\to 0$ and hence, by Proposition \ref{p:precompact-c0}, $T(D_0)$ is not precompact in $c_0$. Thus also $T(D)$ is not precompact in $c_0$, as desired.
\smallskip

(iii)$\Rightarrow$(i) Let $T:E\to C_p^0(\w)$ be  a continuous operator such that for any infinite barrel-bounded barrel-separated set $D$ in $E$, the set $T(D)$ is not  precompact in the Banach space $c_0$. For every $n\in\w$, set $\chi_n :=e'_n\circ T$, where $e'_n$ is the $n$th coordinate functional on $C_p^0(\w)$. Clearly, the sequence $S=\{\chi_n\}_{n\in\w}$ is null in $E'_{w^\ast}$. We show that $S$  witnesses the strong Gelfand--Phillips property of $E$. To this end, fix a barrel-bounded subset $B$ of $E$ which is not barrel-precompact. Then, by Proposition \ref{p:lcs-precompact} applied to the topology $\beta(E,E')$, $B$ contains an infinite barrel-separated subset $D$ of $E$. As a subset of $B$, the set $D$ is also barrel-bounded. Then the choice of the operator $T$ implies that $T(D)$ is not precompact in $c_0$. Finally, by  Proposition \ref{p:precompact-c0}, we obtain that
$\|\chi_n\|_B = \|e'_n\circ T\|_B \not\to 0$.
Thus $E$ has  the strong Gelfand--Phillips property.
%
%
\end{proof}


Let $E$ be  a Banach space. Recall that a subset $A$ of the dual space $E'$ of $E$ is called {\em norming} if there is a real constant $\lambda \geq 1$ such that
\begin{equation} \label{equ:norming}
\sup\{ |\chi(x)|: \chi \in A\cap B_{E'}\} \geq \tfrac{1}{\lambda} \| x\|
\end{equation}
for every $x\in E$, where $B_{E'}$ is the closed unit ball of $E'$.

We shall use repeatedly the following assertion.

\begin{lemma} \label{l:JNP-norming}
Let $E$ be a Banach space, $L$ be a norming subspace in $E'$, and let $\TTT$ be a weaker locally convex topology on $E$ such that $\sigma(E,L)\subseteq\TTT$. Then $(E,\TTT)_\beta=E$. In particular:
\begin{enumerate}
\item[{\rm (i)}] $\big(C_p^0(\w)\big)_{\!\beta}=c_0$; 
\item[{\rm (ii)}] if $X$ is a pseudocompact space and $\TTT$ is a locally convex vector topology on $C(X)$ such that $\Tau_p\subseteq \TTT\subseteq \TTT_{n}$, where $\TTT_{n}$ is the norm topology on $C(X)$, then
$
\big(C_\TTT(X)\big)_{\!\beta}=C(X).
$
\item[{\rm (iii)}] if $E=L'$ for some Banach space $L$, then $\big(E_{\w^\ast}\big)_{\!\beta}= E$.
\end{enumerate}
\end{lemma}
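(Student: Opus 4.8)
The plan is to prove $(E,\TTT)_\beta=E$ by showing that the topology generated by the barrels of $(E,\TTT)$ coincides with the original norm topology of $E$, and then to read off (i)--(iii) by exhibiting a concrete norming subspace in each case. For the \emph{coarse inclusion}, I would note that since $\TTT$ is coarser than the norm topology of $E$, every $\TTT$-closed set is norm-closed; hence each barrel of $(E,\TTT)$ — being $\TTT$-closed, absolutely convex and absorbing — is also a barrel of the Banach space $E$. As Banach spaces are barrelled, every such barrel is a norm-neighborhood of $0$, so the barrel topology of $(E,\TTT)$ is coarser than the norm topology.

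For the \emph{fine inclusion} the idea is to place a barrel of $(E,\TTT)$ inside the closed unit ball $B_E$. Let $C:=\{x\in E:|\chi(x)|\le 1\ \text{for all}\ \chi\in L\cap B_{E'}\}$ be the polar in $E$ of $L\cap B_{E'}$. Each $\chi\in L$ is $\sigma(E,L)$-continuous, hence $\TTT$-continuous because $\sigma(E,L)\subseteq\TTT$; therefore $C$ is $\TTT$-closed, and it is plainly absolutely convex. From $|\chi(x)|\le\|\chi\|\,\|x\|\le 1$ for $\chi\in B_{E'}$ and $x\in B_E$ we get $B_E\subseteq C$, so $C$ is absorbing, and thus $C$ is a barrel of $(E,\TTT)$. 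On the other hand, the norming inequality \eqref{equ:norming} applied with $A=L$ forces $C\subseteq\lambda B_E$, so $\lambda^{-1}C$ is a barrel of $(E,\TTT)$ contained in $B_E$. Hence $B_E$ is a neighborhood of $0$ in the barrel topology of $(E,\TTT)$, and since $\{rB_E:r>0\}$ is a neighborhood base at $0$ for the norm topology, the norm topology is coarser than the barrel topology. Combined with the previous paragraph this yields $(E,\TTT)_\beta=E$.

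For the three corollaries I would simply supply the norming subspace $L$ with $\sigma(E,L)\subseteq\TTT$. In (i), $E=c_0$ and $L=\spn\{e'_n:n\in\w\}$: since $\sup_n|e'_n(x)|=\|x\|_\infty$ and $\|e'_n\|=1$, $L$ is norming with $\lambda=1$, and $\sigma(c_0,L)$ is exactly the pointwise topology of $C_p^0(\w)$. In (ii), pseudocompactness of $X$ makes $C(X)=C^b(X)$ a Banach space in the sup-norm; take $L=\spn\{\delta_x:x\in X\}$, so that $\sup_{x\in X}|\delta_x(f)|=\|f\|_X$ with $\|\delta_x\|=1$, whence $L$ is norming with $\lambda=1$ and $\sigma(C(X),L)=\Tau_p\subseteq\TTT$. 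In (iii), $E=L'$; viewing $L\subseteq E'=L''$ via the canonical embedding one has $\sup\{|x(\phi)|:x\in L\cap B_{L''}\}=\|\phi\|_{L'}$ for $\phi\in L'$, so $L$ is norming with $\lambda=1$ and $\sigma(E,L)$ is precisely the weak$^\ast$ topology. In each case the main assertion applies.

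No step presents a serious obstacle; the one point requiring care is the bookkeeping of the topology directions — a coarser $\TTT$ has fewer open sets, yet the polars of $\TTT$-continuous functionals remain $\TTT$-closed — which is exactly what makes $C$ a genuine $\TTT$-barrel, after which everything is routine.
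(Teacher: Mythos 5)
Your proof is correct and follows essentially the same strategy as the paper: barrels of $(E,\TTT)$ are norm-barrels and hence norm-neighborhoods of zero by barrelledness of Banach spaces, while conversely a $\TTT$-closed barrel trapped between $B_E$ and a fixed multiple of it is manufactured from the norming subspace $L$. The only (harmless) difference is that you take this barrel to be the polar of $L\cap B_{E'}$, bounded by $\lambda B_E$ directly from the norming inequality, whereas the paper uses the $\TTT$-closure of $B_E$ and shows it lies in $2\lambda B_E$ by a pointwise separation argument; your choices of $L$ in (i)--(iii) also match the paper's.
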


\begin{proof}
Choose $\lambda\geq 1$ such that  for $A=L$, the inequality (\ref{equ:norming}) holds for every $x\in E$. Define $S_L:= \{ \chi\in L: \| \chi\|=1\}$ and set $H:=(E,\TTT)$. 

Now, let $U$ be a barrel in $H$. Then $U$ is a barrel in $E$, and hence $E$ is a neighborhood of zero in the Banach space $E$.  Thus $\beta(H,H')$ is contained in the topology $\TTT_n$ generated by the norm of the Banach space $E$.

To prove that $\beta(H,H') \supseteq \TTT_{n}$, it suffices to show that the $\TTT$-closure $\overline{B}$ of the  closed unit ball  $B=\{x\in E:\|x\|\le 1\}$ of $E$ is contained in the ball $2\lambda B$ (because $\overline{B}$ is a  barrel in $H$, so $\tfrac{1}{2\lambda} \overline{B}\subseteq B$ is a $\beta(H,H')$-neighborhood of zero). To this end, for any $x\in E\setminus 2\lambda B$ choose $\chi_x \in S_L$ such that
\[
|\chi_x (x)| >\tfrac{1}{2\lambda} \| x\| >1.
\]
On the other hand, if $y\in B$, we have $|\chi_x(y)|\leq\|\chi_x\|\cdot\| y\| \leq 1$.
Since $\sigma(E,L)\leq\TTT$, each $\chi_x$ is $\TTT$-continuous and hence
\[
\overline{B} \subseteq \bigcap_{x\in E\SM 2\lambda B}\{y\in E:|\chi_x(y)|\le 1\} \subseteq \bigcap_{x\in E\SM 2\lambda B} E\SM\{x\} = 2\lambda B,
\]
as desired.
\smallskip

(i) To prove the  equality $\big(C_p^0(\w)\big)_{\!\beta}=c_0$, let $L:=\spn\big\{ e'_n:n\in\w\big\}$, where $e'_n$ is the  projection of $C_p^0(\w)$ onto the $n$th coordinate. Then $L$ is a norming subspace of $c_0$ and $C_p^0(\w)=(c_0,\sigma(c_0,L))$. As we proved above $\big(C_p^0(\w)\big)_{\!\beta}=c_0$.
\smallskip

(ii) It is clear that the subspace $L=C_p(X)'$ of $C(X)'$ is norming for the Banach space $C(X)$, and $\sigma(C(X),L)=\TTT_p \subseteq\TTT\subseteq \TTT_n$. Thus $\big(C_\TTT(X)\big)_{\!\beta}=C(X)$.
\smallskip

(iii) follows from the fact that $L$ is a norming subspace of $E'$.
\end{proof}

We define a locally convex space $E$ to be {\em $\beta$-Banach} if the space $E_\beta$ is topologically isomorphic to a Banach space. In particular, each Banach space is $\beta$-Banach.  Since each linear continuous functional on the space $E$ remains continuous in the topology of the space $E_\beta$, we can identify the dual space $E'$ of $E$ with a subspace of $(E_\beta)'$.

To prove Theorem \ref{t:strong-GP} we shall use  the following simple lemma.
\begin{lemma}\label{l:C0-c0-cont}
Each continuous operator $T:L\to C_p^0(\w)$ from a barrelled space $L$ remains continuous as an operator from $L$ to $c_0$.
\end{lemma}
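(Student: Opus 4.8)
The plan is to exploit the fact that barrelledness gives us exactly the tool needed to promote pointwise-type continuity into norm continuity, via the closed-graph/uniform-boundedness circle of ideas, but here a direct argument is cleaner. Let $T:L\to C_p^0(\w)$ be a continuous operator from a barrelled space $L$. For each $n\in\w$, write $\chi_n:=e'_n\circ T\in L'$, which is continuous because $T$ is continuous into $C_p^0(\w)$ and $e'_n$ is a coordinate functional, hence $\sigma$-continuous on $C_p^0(\w)$. Since $T(x)\in c_0$ for every $x$, the sequence $(\chi_n(x))_{n\in\w}$ tends to $0$, so in particular $\sup_{n\in\w}|\chi_n(x)|<\infty$ for every $x\in L$; that is, $\{\chi_n\}_{n\in\w}$ is a pointwise-bounded family of continuous linear functionals on $L$. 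Here is where barrelledness enters: the set $W:=\{x\in L:\sup_{n}|\chi_n(x)|\le 1\}=\bigcap_n\{x:|\chi_n(x)|\le 1\}$ is a barrel in $L$ (closed, absolutely convex, and absorbing precisely because of pointwise boundedness), so $W$ is a neighborhood of zero in the barrelled space $L$.

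Now I would simply unwind what this gives. For $x\in W$ we have $\|T(x)\|_{\w}=\sup_n|\chi_n(x)|\le 1$, so $T(W)$ is contained in the closed unit ball of $c_0$; equivalently $T:L\to c_0$ maps the zero-neighborhood $W$ into a bounded set. Since $L$ is barrelled (in particular any neighborhood of zero is absorbing and scalings $\tfrac1m W$ form a neighborhood base behavior suffices), continuity of $T:L\to c_0$ at $0$ follows: given $\e>0$, the set $\e W$ is a neighborhood of zero in $L$ with $T(\e W)=\e\,T(W)\subseteq \e B_{c_0}$. By linearity $T$ is continuous as an operator from $L$ into the Banach space $c_0$.

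The only point that requires a word of care — and the one I expect a careful reader to want spelled out — is the claim that $W$ is genuinely a \emph{barrel}, i.e. that it is closed in $L$ and absorbing. Closedness is immediate since each $\{x:|\chi_n(x)|\le 1\}$ is closed ($\chi_n$ being continuous on $L$) and an intersection of closed sets is closed; absolute convexity is clear. Absorbency is exactly the pointwise-boundedness of $\{\chi_n\}$: for fixed $x$, choosing $m\in\w$ with $\sup_n|\chi_n(x)|\le m$ gives $x\in mW$. Thus no hidden obstacle arises; the whole content of the lemma is the one invocation of the definition of barrelledness applied to the barrel $W$. (Alternatively, one can phrase this as: the identity map $c_0=\big(C_p^0(\w)\big)_\beta\hookrightarrow c_0$ together with Lemma \ref{l:JNP-norming}(i) shows that the $\beta$-topology of $C_p^0(\w)$ is the norm topology, and a continuous operator from a barrelled space into $C_p^0(\w)$ automatically factors continuously through its $\beta$-modification; but the elementary argument above is self-contained.)
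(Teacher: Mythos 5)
Your proof is correct and is essentially the paper's argument: the set $W=\bigcap_n\{x:|\chi_n(x)|\le 1\}$ you construct is exactly $T^{-1}(B)$ where $B$ is the closed unit ball of $c_0$, and the paper likewise observes that $B$ is a barrel in $C_p^0(\w)$, so its preimage under the continuous operator $T$ is a barrel in $L$, hence a neighborhood of zero by barrelledness. Your explicit verification of closedness, absolute convexity and absorbency is just an unpacking of the standard fact that preimages of barrels under continuous linear maps are barrels.
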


\begin{proof}
Let $\{e'_n\}_{n\in\w}$ be the sequence of coordinate functionals on the Banach space $c_0$. The definition of the topology of the space $C_p^0(\w)$ ensures that each functional $e_n'$ remains continuous on the locally convex space $C_p^0(\w)$. Observe that the intersection $B:=\bigcap_{n\in\w}\{x\in C_p^0(\w):|e'_n(x)|\le 1\}$ coincides with the closed unit ball of the Banach space $c_0$. Since $B$ is a barrel also in $C^0_p(\w)$, the continuity of the operator $T$ implies that the set $T^{-1}(B)$ is a barrel in $L$. Since $L$ is barrelled, $T^{-1}(B)$ is a neighborhood of zero, which means that the operator $T:L\to c_0$ is continuous.
\end{proof}

Below we characterize $\beta$-Banach  locally convex spaces with the strong Gelfand--Phillips property. This result essentially generalizes Theorem \ref{t:Banach-strong-GP}.

\begin{theorem}\label{t:strong-GP}
A $\beta$-Banach locally convex space $E$ has the strong  Gelfand--Phillips  property if and only if  the space $E_\beta$ embeds into the Banach space $c_0$ and $E'$ is dense in the Banach space $(E_\beta)'$.
\end{theorem}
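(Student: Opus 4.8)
The plan is to deduce both directions from the characterization of the strong Gelfand--Phillips property given in Theorem~\ref{t:char-sGP}, combined with the identification $(C_p^0(\w))_\beta = c_0$ from Lemma~\ref{l:JNP-norming}(i) and the elementary Lemma~\ref{l:C0-c0-cont}. Write $H := E_\beta$, which by hypothesis is (isomorphic to) a Banach space; note $H$ is barrelled, that $E'\subseteq H'$, and that barrel-bounded (resp.\ barrel-precompact) subsets of $E$ are exactly the bounded (resp.\ precompact) subsets of $H$, while barrel-separated subsets of $E$ are exactly the uniformly discrete subsets of $H$.

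For the ``only if'' direction, assume $E$ is strongly Gelfand--Phillips and take the operator $T\colon E\to C_p^0(\w)$ supplied by Theorem~\ref{t:char-sGP}(iii). Since $T$ is continuous on $E$, it is continuous on the finer space $H=E_\beta$, so by Lemma~\ref{l:C0-c0-cont} it is a continuous operator $T\colon H\to c_0$. I claim $T$ is injective: if $Tx=0$ for some $x\ne 0$ in $H$, then $D:=\{0,x\}$ — more precisely, a countable uniformly discrete set built from scalar multiples $\{nx : n\in\w\}\cap B$ inside a suitable ball, or simply by taking a bounded uniformly discrete sequence lying on the line $\mathbb{F}x$ — is an infinite barrel-bounded barrel-separated set with $T(D)=\{0\}$ precompact, contradicting the defining property of $T$. (If $\mathbb{F}x$ cannot carry an infinite uniformly discrete bounded set this forces $x=0$ anyway, since a one-dimensional normed space does.) Thus $T$ embeds $H$ into $c_0$. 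Finally, each $\chi_n=e_n'\circ T$ lies in $E'$, and the family $\{\chi_n\}$ separates points of $H$ because $T$ is injective; since $H$ is a Banach space and $\{\chi_n\}\subseteq E'\subseteq H'$ separates points, a standard argument (the $\chi_n$ norming the image, as in the proof of Lemma~\ref{l:JNP-norming}) shows $\spn\{\chi_n\}$, hence $E'$, is dense in $(E_\beta)' = H'$ for the norm topology; more carefully, one uses that $T$ is an embedding so $\|x\|_{H}$ is comparable to $\sup_n|\chi_n(x)|$, and functionals of sup-norm-one type are limits of finite combinations of the $\chi_n$.

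For the ``if'' direction, assume $E_\beta$ embeds into $c_0$, say via an isomorphic embedding $j\colon H\hookrightarrow c_0$, and that $E'$ is dense in $H'$. Let $p_n'$ be the coordinate functionals of $c_0$; then $j^*p_n' \in H'$, and using the norm-density of $E'$ in $H'$ together with the fact that $\{j^*p_n'\}$ norms $H$ (since $j$ is an embedding), one perturbs each $j^*p_n'$ slightly within $E'$ to obtain functionals $\chi_n\in E'$ with $\|\chi_n - j^*p_n'\|_{H'}\to 0$ fast enough that $\{\chi_n\}$ still norms $H$ and is weak$^*$ null in $E'_{w^*}$ (it is weak$^*$ null already because $\chi_n\to 0$ in $H'$-norm, as $j^*p_n'\to 0$ weak$^*$ in $H'$ and the perturbations are norm-small --- actually one needs only weak$^*$ nullity, which follows from $j^*p_n' \xrightarrow{w^*} 0$ in $H'$ plus norm-small corrections). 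Define $T\colon E\to C_p^0(\w)$ by $T(x) = (\chi_n(x))_{n\in\w}$; this lands in $c_0$ and is continuous into $C_p^0(\w)$. If $D\subseteq E$ is infinite, barrel-bounded and barrel-separated, then $D$ is bounded and uniformly discrete in $H$, hence not precompact in $H$; since $\{\chi_n\}$ norms $H$, the map $x\mapsto(\chi_n(x))_n$ is bounded below, so $T(D)$ is a uniformly discrete (hence non-precompact, by Proposition~\ref{p:precompact-c0}) subset of $c_0$. Thus $T$ satisfies condition (iii) of Theorem~\ref{t:char-sGP}, and $E$ is strongly Gelfand--Phillips.

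The main obstacle I anticipate is the perturbation argument in the ``if'' direction (and its mirror in the ``only if'' direction): one must convert the \emph{norm-density} of $E'$ in $(E_\beta)'$ into the existence of an \emph{honest} sequence $\{\chi_n\}\subseteq E'$ that both norms $E_\beta$ and is weak$^*$ null, without destroying either property under the approximation. The quantitative bookkeeping --- choosing the approximation errors $\e_n\to 0$ summably small relative to the norming constant, so that $\sup_n|\chi_n(x)| \geq \tfrac1{2\lambda}\|x\|_H$ survives --- is the technical heart, though it is routine once set up correctly; everything else reduces to Theorem~\ref{t:char-sGP} and Lemmas~\ref{l:JNP-norming}--\ref{l:C0-c0-cont}.
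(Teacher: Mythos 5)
Your converse direction is essentially sound, and its route differs slightly from the paper's in a pleasant way: after perturbing the coordinate functionals inside the dense subspace $E'$ so that $T(x)=(\chi_n(x))_{n\in\w}$ is bounded below on $E_\beta$, you observe that $T$ carries infinite barrel-bounded barrel-separated sets to infinite uniformly discrete subsets of $c_0$ and invoke Theorem~\ref{t:char-sGP}(iii); the paper instead argues directly from the definition via a diagonal extraction of indices $(n_k,m_k)$ with $|e'_{m_k}(x_{n_k})|>\tfrac13\e$. Your version works once the quantitative bookkeeping you mention is carried out (one only needs $\sup_n\e_n$ small relative to the lower embedding constant). The genuine problems are all in the ``only if'' direction. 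First, the injectivity argument fails: a bounded subset of the one-dimensional space $\IF x$ is precompact, so $\IF x$ carries \emph{no} infinite bounded uniformly discrete set, and condition (iii) of Theorem~\ref{t:char-sGP} yields no contradiction from a single nonzero $x\in\ker T$. What the hypothesis actually gives is that every bounded subset of the closed subspace $\ker T\subseteq E_\beta$ is precompact, hence $\ker T$ is finite-dimensional; one must then \emph{augment} the sequence $(\chi_n)_{n\in\w}$ by finitely many functionals separating the points of $\ker T$ (as the paper does) before $T$ becomes injective.

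Second, even granting injectivity, the step ``thus $T$ embeds $H$ into $c_0$'' is unjustified: a continuous injective operator between Banach spaces need not be bounded below (consider the inclusion $\ell_1\to\ell_2$). The paper devotes a separate argument to this point: if $T$ were not an embedding one finds unit vectors $x_n$ with $\|T(x_n)\|\le 2^{-n}$, shows $\{x_n\}_{n\in\w}$ is not precompact in $E_\beta$ (using completeness and the injectivity of $T$), and then computes that $\|\chi_k\|_{\{x_n\}}\to0$, contradicting the witnessing property of $(\chi_n)_{n\in\w}$. You omit this entirely. Third, your density argument leans on the principle that a norming sequence has dense linear span in the dual, which is false --- the paper's own example of $c_p$ exhibits the norming sequence $(e'_n)_{n\in\w}$ in $c'$ whose span is not dense, and indeed that example is precisely a $\beta$-Banach space failing the strong Gelfand--Phillips property for this reason. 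The correct argument uses that $T:E_\beta\to c_0$ being an embedding makes the adjoint $T^\ast:\ell_1\to(E_\beta)'$ surjective, so that $(E_\beta)'=T^\ast\big(\overline{\spn}\{e'_n:n\in\w\}\big)\subseteq\overline{\spn\{\chi_n:n\in\w\}}\subseteq\overline{E'}$. All three gaps are repairable, but each requires an idea that is absent from your write-up.
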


\begin{proof}
Assume that $E$ has the strong Gelfand--Phillips property. Then the dual space $E'_{w^\ast}$ contains a null sequence $\{\chi_n\}_{n\in\w}$ such that $\|\chi_n\|_B\not\to 0$ for any barrel-bounded subset $B\subseteq E$, which is not barrel-precompact. This implies that each barrel-bounded subset $B\subseteq Z$ in the closed linear subspace $Z=\bigcap_{n\in\w}\chi_n^{-1}(0)$ of $E$ is barrel-precompact. Then each bounded subset in the subspace $Z$ of the Banach space $E_\beta$ is precompact, which implies that the space $Z$ is finite-dimensional. Unifying the null sequence $\{\chi_n\}_{n\in\w}$ with a finite set of functionals separating points of the finite-dimensional subspace $Z$, we can assume that $Z=\{0\}$. In this case the   operator
\[
T:E\to C_p^0(\w), \quad T:x\mapsto\big(\chi_n(x)\big)_{n\in\w},
\]
is injective and continuous.

We claim that the operator $T:E_\beta\to c_0$ is an isomorphic topological embedding. Indeed, since the space $E_\beta$ is Banach and hence barrelled, Lemma \ref{l:C0-c0-cont} ensures that the operator $T:E_\beta\to c_0$ is continuous. By our assumption, the topology $\beta(E,E')$ of the space $E_\beta$ is generated by a (complete) norm $\|\cdot\|$.

Assuming that the operator $T:E_\beta\to c_0$ is not an embedding, we can find a sequence $\{x_n\}_{n\in\w}\subseteq E$ of elements of norm $\|x_n\|=1$ such that $\sup_{m\in\w}|\chi_m(x_n)|=\|T(x_n)\|\le \frac1{2^n}$ for every $n\in\w$.
We claim that the set $B=\{x_n\}_{n\in\w}$ is not precompact in the Banach space $E_\beta$. Indeed, in the opposite case, by the completeness of $E_\beta$, the sequence $\{x_n\}_{n\in\w}$ would contain a subsequence $\{x_{n_k}\}_{k\in\w}$ that converges in $E_\beta$ to some element $x_\infty\in E$ of norm $\|x_\infty\|=1$. The continuity of the operator $T$ ensures that $T(x_\infty)=\lim_{n\to\infty}T(x_n)=0$, which contradicts the injectivity of $T$. This contradiction shows that the set  $B$ is not precompact in $E_\beta$ and hence is not barrel-precompact in $E$. Now the choice of the sequence $\{\chi_n\}_{n\in\w}$ ensures that the sequence $\{\|\chi_n\|_B\}_{n\in\w}$ does not converge to zero. On the other hand, for every $\e>0$, we can find an $n\in\w$ such that $\frac1{2^n}<\e$. Since the sequence $\{\chi_k\}_{k\in\w}$ converges to zero in $E'_{w^\ast}$, there exists a natural number $m$ such $|\chi_k(x_i)|<\e$ for all $i\le n$ and $k\ge m$. Then for every $k\ge m$, we have
\[
\|\chi_k\|_B=\sup_{i\in\w}|\chi_k(x_i)|=\max\{\max_{i\le n}|\chi_k(x_i)|,\sup_{i>n}|\chi_k(x_i)|\}\le\max\{\e,\sup_{i>n}\tfrac1{2^i}\}\le\max\{\e,\tfrac1{2^n}\}=\e,
\]
which means that $\|\chi_k\|_B\to 0$. This contradiction shows that the operator $T:E_\beta\to c_0$ is a topological embedding.
\smallskip

To show that $E'$ is dense in the Banach space $(E_\beta)'$, we observe first that the adjoint map $T^\ast:(c_0)'=\ell_1 \to (E_\beta)'$ is surjective because $T$ is an embedding. Therefore it suffices to show that $E'$ contains all vectors $T^\ast(e'_n)$, where $\{ e'_n\}_{n\in\w}$ is the canonical basis in $\ell_1$. But this follows from the construction of the operator $T$ since $T^\ast(e'_n)=\chi_n$ for every $n\in\w$.
\smallskip

Conversely, assume now that the space $E_\beta$ embeds into the Banach space $c_0$ and $E'$ is dense in the Banach space $(E_\beta)'$.
We shall identify the space $E_\beta$ with a subspace of $c_0$. Let  $B=\{x\in E_\beta:\|x\|\le 1\}$ be the closed unit ball in the subspace $E_\beta$ of $c_0$.


For every $n\in\w$, let $\chi_n=e_n'{\restriction}_E$ be the restriction of the coordinate functional $e_n'\in c_0' =\ell_1$ to the subspace $E_\beta\subseteq c_0$. By our assumption, $\chi_n\in \overline{E'}\subseteq (E_\beta)'_\beta$, so there exists a functional $\mu_n\in E'$ such that
\begin{equation} \label{equ:JNP-01}
\|\mu_n-\chi_n\|:=\sup_{x\in B}|\mu_n(x)-\chi_n(x)|<\tfrac1{2^n}.
\end{equation}
 Since the sequence $\{e_n'\}_{n\in\w}$ is null in $(c_0)'_{w^\ast}$, the sequence $\{\chi_n\}_{n\in\w}$ is  $\sigma((E_\beta)',E_\beta)$-null in the dual space $(E_\beta)'$,  and
(\ref{equ:JNP-01}) implies that $\{\mu_n\}_{n\in\w}$ is $\sigma(E',E)$-null in $E'$.

Now take any barrel-bounded subset $P\subseteq E$ which is not barrel-precompact. Then there exists a barrel $D\subseteq E$ such that $P\not\subseteq F+D$ for any finite set $F\subseteq E$. Since $D$ is a barrel in the Banach space $E_\beta$, there exists $\e>0$ such that $\e B\subseteq D$. Then $P\nsubseteq F+\e B$ for any finite set $F\subseteq E$. This allows us to select inductively a sequence $\{x_n\}_{n\in\w}\subseteq P$ such that $\|x_m-x_n\|>\e$ for any $n<m$. Since the set $P\subseteq E_\beta\subseteq c_0$ is barrel-bounded, it is bounded in $\IF^\w$ and hence has compact closure in $\IF^\w$. Replacing the sequence $\{x_n\}_{n\in\w}$ by a suitable subsequence, we can assume that it converges to some element of $\IF^\w$.

We claim that there are two strictly increasing sequences $\{n_k\}_{k\in\w}$ and $\{m_k\}_{k\in\w}$ in $\w$ such that
\begin{equation} \label{equ:JNP-4}
|e'_{m_k}(x_{n_k})|>\tfrac{1}{3}\e.
\end{equation}
We proceed by induction on $k$. Since $\inf_{i\ne j}\|x_i-x_j\|\ge \e$, there exists a number $n_0\in\w$ such that $\|x_{n_0}\| >\tfrac{1}{3}\e$. By the definition of the norm $\|x_{n_0}\|$, there exists a number $m_0\in\w$ such that $|e_{m_0}'(x_{n_0})|>\tfrac{1}{3}\e$. Now, assume that for some $k\in\w$, we constructed strictly increasing sequences $\{n_i\}_{i<k}$ and $\{m_i\}_{i<k}$. Since the sequence $\{x_n\}_{n\in\w}$ converges in $\IF^\w$, there exists a number $l_k> n_{k-1}$ such that
\begin{equation} \label{equ:JNP-4-1}
\big|e'_p(x_i -x_j)|<\tfrac{2}{3}\e \; \mbox{ for all }\; i,j\ge l_k \;\mbox{ and }\; p\le m_{k-1}.
\end{equation}
As $\|x_{l_k}-x_{l_k+1}\|\ge\e$, (\ref{equ:JNP-4-1}) implies that there exists a natural number $m_k>m_{k-1}$ such that
$|e'_{m_k}(x_{l_k}-x_{l_k+1})|>\tfrac{2}{3}\e$.
Then, for some number $n_k\in\{l_k,l_k+1\}$, we get $|e'_{m_k}(x_{n_k})|>\frac13\e$.  Noting that $n_k\geq l_k>n_{k-1}$, we complete the inductive step.
\smallskip

Now, for every $k\in\w$, (\ref{equ:JNP-4}) implies
\[
\|\chi_{m_k}\|_P \ge |e'_{m_k}(x_{n_k})|>\tfrac{1}{3}\e.
\]
Observe also that, by  (\ref{equ:JNP-01}), we have
\[
|\mu_{m_k}(x_{n_k})-\chi_{m_k}(x_{n_k})|\le\|\mu_{m_k}-\chi_{m_k}\|\cdot\|P\|<\frac1{2^{m_k}}\|P\|,
\]
 where $\|P\|=\sup_{x\in P}\|x\|$ is finite as $P$ is bounded in $E_\beta\subseteq c_0$. Then (\ref{equ:JNP-4}) ensures that
\[
\|\mu_{m_k}\|_P\ge |\chi_{m_k}(x_{n_k})|-|\mu_{m_k}(x_{n_k})-\chi_{m_k}(x_{n_k})|>\tfrac13\e-\tfrac1{2^{m_k}}\cdot\|P\|,
\]
which implies that $\|\mu_n\|_P\not\to 0$, witnessing that the space $E$ is strongly Gelfand--Phillips.
\end{proof}

\begin{corollary} \label{c:sGP-C0p}
The space $C^0_p(\w)$ is strongly Gelfand--Phillips.
\end{corollary}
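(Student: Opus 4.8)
The plan is to derive this directly from Theorem~\ref{t:strong-GP}. First I would note that $E:=C^0_p(\w)$ is $\beta$-Banach: by Lemma~\ref{l:JNP-norming}(i) we have $E_\beta=\big(C^0_p(\w)\big)_\beta=c_0$, which is a Banach space. Consequently $E_\beta=c_0$ trivially embeds into the Banach space $c_0$, and it only remains to check that $E'$ is dense in $(E_\beta)'=(c_0)'=\ell_1$.

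For this I would recall, as in the proof of Lemma~\ref{l:JNP-norming}(i), that $C^0_p(\w)=\big(c_0,\sigma(c_0,L)\big)$ with $L:=\spn\{e'_n:n\in\w\}$, so that $E'=\big(C^0_p(\w)\big)'=L$ is precisely the linear span of the coordinate functionals, i.e.\ the set of finitely supported elements of $\ell_1$. This subspace is norm-dense in $\ell_1$, so both conditions of Theorem~\ref{t:strong-GP} are fulfilled and hence $C^0_p(\w)$ is strongly Gelfand--Phillips.

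There is essentially no obstacle here; the argument is a one-line application of Theorem~\ref{t:strong-GP} once the identifications $E_\beta=c_0$ and $E'=L$ are in place, and both of these were already established in the proof of Lemma~\ref{l:JNP-norming}(i). As an alternative self-contained route one could instead verify condition~(iii) of Theorem~\ref{t:char-sGP} for the identity operator $T=\id:C^0_p(\w)\to C^0_p(\w)$: since $\big(C^0_p(\w)\big)_\beta=c_0$, every infinite barrel-bounded barrel-separated subset $D\subseteq C^0_p(\w)$ is an infinite bounded, $\e B_{c_0}$-separated subset of $c_0$ for some $\e>0$, hence is not precompact in $c_0$ by Proposition~\ref{p:lcs-precompact}, so that $T(D)=D$ is not precompact in $c_0$, as required.
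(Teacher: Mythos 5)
Your main argument is exactly the paper's proof: invoke Lemma~\ref{l:JNP-norming}(i) to get $\big(C^0_p(\w)\big)_\beta=c_0$ and then apply Theorem~\ref{t:strong-GP}, observing that $C^0_p(\w)'$ is the linear span of the coordinate functionals and hence dense in $\ell_1$. The alternative route via Theorem~\ref{t:char-sGP}(iii) that you sketch is also sound, but it is not needed; the identifications you make are precisely those the paper uses.
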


\begin{proof}
By Lemma \ref{l:JNP-norming}, we have $\big(C_p^0(\w)\big)_{\!\beta}=c_0$. To apply Theorem \ref{t:strong-GP} it remains to note that the dual space $C_p^0(\w)'$ coincides with the linear hull of the standard basis of the Banach space $\ell_1=(c_0)'=\big(C_p^0(\w)\big)_{\!\beta}'$.
\end{proof}

A weaker locally convex topology $\TTT$ on a Banach space $E$ is called {\em compatible} if $(E,\Tau)'=E$.

\begin{corollary} \label{c:weak-dual-Banach}
For a Banach space $E$ the following assertions are equivalent:
\begin{enumerate}
\item[{\rm(i)}] for every compatible locally convex topology $\TTT$ on $E$, the space $(E,\TTT)$ is strongly Gelfand--Phillips;
\item[{\rm(ii)}] $E_w$ is a strongly Gelfand--Phillips space;
\item[{\rm(iii)}] $E$ embeds into the Banach space $c_0$.
\end{enumerate}
\end{corollary}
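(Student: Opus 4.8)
The plan is to derive Corollary \ref{c:weak-dual-Banach} from Theorem \ref{t:strong-GP} together with Lemma \ref{l:JNP-norming}(iii) and the classical Josefson--Nissenzweig-type argument about compatible topologies. The key observation is that for any compatible locally convex topology $\TTT$ on a Banach space $E$ (so that $(E,\TTT)'=E'$), the bidual pairing is unchanged, and hence the original dual $E'$ is a norming subspace of itself with $\sigma(E,E')\subseteq\TTT$; therefore Lemma \ref{l:JNP-norming} applies and gives $(E,\TTT)_\beta=E$, i.e.\ the space $(E,\TTT)$ is $\beta$-Banach with $(E,\TTT)_\beta$ topologically isomorphic to the original Banach space $E$, and moreover $((E,\TTT)_\beta)'=E'$. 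In particular, the weak topology $\TTT=\sigma(E,E')$ is compatible, which links (i) and (ii).

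First I would prove (i)$\Rightarrow$(ii): this is trivial since $E_w$ is the space $(E,\TTT)$ for the compatible topology $\TTT=\sigma(E,E')$. Next, (ii)$\Rightarrow$(iii): by the preceding paragraph, $(E_w)_\beta=E$ and $(E_w)'=E'=((E_w)_\beta)'$, so $E'$ is (trivially) dense in $((E_w)_\beta)'$; applying Theorem \ref{t:strong-GP} to the $\beta$-Banach space $E_w$, its strong Gelfand--Phillips property forces $(E_w)_\beta=E$ to embed into $c_0$. Finally, (iii)$\Rightarrow$(i): suppose $E$ embeds into $c_0$ and let $\TTT$ be any compatible locally convex topology. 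Again by Lemma \ref{l:JNP-norming}(iii) (or directly the general statement of Lemma \ref{l:JNP-norming} with $L=E'$), $(E,\TTT)$ is $\beta$-Banach with $(E,\TTT)_\beta=E$, which embeds into $c_0$ by hypothesis, and $(E,\TTT)'=E'=((E,\TTT)_\beta)'$, so the density condition of Theorem \ref{t:strong-GP} is automatic; hence $(E,\TTT)$ has the strong Gelfand--Phillips property.

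The one point that requires a little care, rather than being purely formal, is the verification that a compatible topology $\TTT$ really does satisfy the hypotheses of Lemma \ref{l:JNP-norming}: we need $\sigma(E,E')\subseteq\TTT$ (true because $\TTT$ is a locally convex topology with continuous dual containing $E'$, so every $\chi\in E'$ is $\TTT$-continuous) and that $E'$ is a norming subspace of the Banach dual $E'$ of $E$ — but this is immediate with $\lambda=1$ from the definition of the norm $\|x\|=\sup\{|\chi(x)|:\chi\in B_{E'}\}$. So $(E,\TTT)_\beta=E$ as Banach spaces and the dual of $(E,\TTT)_\beta$ equals the dual of $E$, which equals $(E,\TTT)'=E'$, making the density clause in Theorem \ref{t:strong-GP} trivially satisfied. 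With these identifications in hand, the three implications close the cycle (iii)$\Rightarrow$(i)$\Rightarrow$(ii)$\Rightarrow$(iii), and I expect no genuine obstacle: the whole proof is a short deduction from the already-established Theorem \ref{t:strong-GP} and Lemma \ref{l:JNP-norming}.
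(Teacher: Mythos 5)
Your proposal is correct and follows essentially the same route as the paper: both derive the corollary from Theorem \ref{t:strong-GP} after using Lemma \ref{l:JNP-norming} to see that any compatible topology $\TTT$ satisfies $(E,\TTT)_\beta=E$ and that the density condition $(E,\TTT)'=E'=\bigl((E,\TTT)_\beta\bigr)'$ is automatic. Your extra remark checking that $\sigma(E,E')\subseteq\TTT$ and that $E'$ is norming (with $\lambda=1$) is exactly the verification the paper leaves implicit, so there is nothing to add.
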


\begin{proof}
The implication (i)$\Rightarrow$(ii) is trivial.
\smallskip

(ii)$\Rightarrow$(iii) By Lemma \ref{l:JNP-norming} we have $(E_w)_\beta= E$. Since $(E_w)'=E'$ the space $E_w$ is strongly Gelfand--Phillips, by Theorem \ref{t:strong-GP}.
\smallskip

(iii)$\Rightarrow$(i) By Lemma \ref{l:JNP-norming} we have $(E,\TTT)_\beta=E$. Since $(E,\TTT)'=E'$ the space $(E,\TTT)$ is a strongly Gelfand--Phillips space by Theorem \ref{t:strong-GP}.
\end{proof}

It follows from Corollary \ref{c:weak-dual-Banach} that there are Banach spaces which in the weak topology are strongly Gelfand--Phillips, for example $E=(c_0)_w$. It is natural to ask whether there exist a Banach space $E$ such that the weak$^\ast$ dual $E'_{w^\ast}$ is strongly Gelfand--Phillips. Below we answer this question in the negative. We recall that a Banach space $E$ is called {\em reflexive} if the canonical map $I: E\to E'', I(x)(\chi):=\chi(x)$, is surjective. Observe that since $I$ is an embedding and $E$ is complete, $E$ is reflexive if and only if $I(E)$ is dense in the bidual Banach space $E''$.

\begin{corollary} \label{c:weak*-dual-sGP}
For every Banach space $E$, the weak$^\ast$ dual $E'_{w^\ast}$ of $E$ does not have the strong Gelfand--Phillips property.
\end{corollary}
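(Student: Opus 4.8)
The plan is to apply Theorem~\ref{t:strong-GP} to the $\beta$-Banach space $E'_{w^\ast}$, using Lemma~\ref{l:JNP-norming}(iii) to identify $(E'_{w^\ast})_\beta$. First I would observe that $E'_{w^\ast}$ is indeed $\beta$-Banach: by Lemma~\ref{l:JNP-norming}(iii), since $E$ is itself a norming subspace of $(E')'=E''$ (the canonical copy of $E$ in the bidual is norming by Hahn--Banach), we get $(E'_{w^\ast})_\beta = E'$ with its dual Banach space norm. Hence $(E'_{w^\ast})_\beta = E'$, a Banach space, and its dual is $(E')' = E''$.

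Next I would invoke Theorem~\ref{t:strong-GP}: $E'_{w^\ast}$ has the strong Gelfand--Phillips property if and only if $(E'_{w^\ast})_\beta = E'$ embeds into $c_0$ \emph{and} the dual space of $E'_{w^\ast}$, namely $E$ (viewed inside $E''$ via the canonical embedding $I$), is dense in the Banach space $(E'_{w^\ast})'_\beta = E''$. I would then argue that the second condition fails. Indeed, $I(E)$ is dense in $E''$ precisely when $E$ is reflexive (as recalled in the paragraph preceding the corollary, since $I$ is an isometric embedding and $E$ is complete, $I(E)$ is automatically closed in $E''$, so density forces $I(E)=E''$, i.e.\ reflexivity). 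So the strong Gelfand--Phillips property of $E'_{w^\ast}$ would force $E$ to be reflexive and $E'$ to embed into $c_0$.

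To finish, I would derive a contradiction from these two consequences. If $E$ is reflexive and $E' \hookrightarrow c_0$, then $E'$ is a reflexive subspace of $c_0$; but $c_0$ contains no infinite-dimensional reflexive subspace (every infinite-dimensional closed subspace of $c_0$ contains a copy of $c_0$, by a classical result, and $c_0$ is not reflexive), so $E'$ — and hence $E$ — is finite-dimensional, contradicting our blanket assumption that all locally convex spaces in the paper are infinite-dimensional. Therefore $E'_{w^\ast}$ cannot have the strong Gelfand--Phillips property.

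The main obstacle here is really just bookkeeping: one must be careful that $E'_{w^\ast}$ genuinely falls under the hypothesis of Theorem~\ref{t:strong-GP}, i.e.\ that $(E'_{w^\ast})_\beta$ is a \emph{Banach} space (not merely normable), which is exactly the content of Lemma~\ref{l:JNP-norming}(iii) applied with the norming subspace $I(E)\subseteq (E')'$, and that the dual of $E'_{w^\ast}$ is correctly identified with $E$ rather than with $E''$ — the weak$^\ast$ topology is designed so that $(E'_{w^\ast})' = I(E)$. Once these identifications are in place, the argument is immediate from the density-forces-reflexivity observation and the structural fact that $c_0$ has no infinite-dimensional reflexive subspaces.
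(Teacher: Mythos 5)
Your proposal is correct and follows essentially the same route as the paper's proof: identify $(E'_{w^\ast})_\beta=E'$ via Lemma~\ref{l:JNP-norming}(iii), apply Theorem~\ref{t:strong-GP} to get that $E'$ embeds into $c_0$ and that $E$ is dense in $E''$ (hence $E$ and $E'$ are reflexive), and derive a contradiction from the fact that every infinite-dimensional subspace of $c_0$ contains a copy of $c_0$ and so cannot be reflexive. The only cosmetic difference is that you phrase the final contradiction as forcing $E'$ to be finite-dimensional, whereas the paper phrases it as $E'$ being non-reflexive; these are the same argument.
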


\begin{proof}
Suppose for a contradiction that $E'_{w^\ast}$ has the strong Gelfand--Phillips property. By Lemma \ref{l:JNP-norming} we have $(E'_{w^\ast})_\beta$ is the dual Banach space $E'$. Therefore $E'_{w^\ast}$ is $\beta$-Banach. By Theorem \ref{t:strong-GP}, $E'$ embeds into $c_0$ and $E=\big(E'_{w^\ast}\big)'$ is a dense subspace of the Banach bidual space $E''$. 
Hence the Banach space $E$ is reflexive. Therefore also $E'$ is reflexive. On the other hand, by Proposition 2.a.2 in \cite{LT}, each infinite-dimensional subspace of $c_0$ contains a copy of $c_0$, and hence the space $E'\subseteq c_0$ is not reflexive. This contradiction shows that $E'_{w^\ast}$ is not  strongly Gelfand--Phillips.
\end{proof}





\begin{example}\label{exa:Ostrovski}
There exists a separable locally convex space $E$ such that the space $E_\beta$ is isomorphic to the Banach space $c_0$ but $E$ is not strongly Gelfand--Phillips.
\end{example}

\begin{proof}
Let $Z$ be a closed, proper, and  norming subspace of $\ell_1$, and let $E:=\big(c_0,\sigma(c_0, Z)\big)$. Then, by Lemma \ref{l:JNP-norming}, $E_\beta$ is isomorphic to the space $c_0$. Since $Z$ is a closed, proper subspace of $\ell_1$, Theorem~\ref{t:strong-GP} implies that $E$ is not  strongly Gelfand--Phillips. Mikhail Ostrovskii suggested to take for $Z$ the linear subspace of $\ell_1$ consisting of  all sequences $z=(z_n)_{n\in\w}\in\ell_1$ such that $\sum_{n\in\omega}z_n=0$.
\end{proof}

Another example with the same properties can be found among function spaces $C_p(K)$.

\begin{example}
The locally space $c_p=\{x\in\IF^\w:\exists\lim_{n\to\infty}x(n)\}$ is not strongly Gelfand--Phillips but $(c_p)_\beta$ is isomorphic to the Banach space $c_0$.
\end{example}

\begin{proof}
Observe that the topology of $c_p$ is the topology on the Banach space $c$  defined by the norming subspace $L=\spn(e'_n)_{n\in\w}$ of $c'$, where $e'_n$ is the $n$th coordinate functional. Therefore, by Lemma \ref{l:JNP-norming}, $(c_p)_\beta =c$ and hence $c_p$ is $\beta$-Banach. It is well known that $c$ is isomorphic to $c_0$. Since $(c_p)'=L$ is not dense in $(c_p)'_\beta=c'$, Theorem \ref{t:strong-GP} implies that the space $c_p$ is not strongly Gelfand--Phillips.
\end{proof}

Now we consider some hereditary properties of the class of strongly  Gelfand--Phillips  spaces. We define a linear subspace $X$ of a locally convex space $E$ to be {\em $\beta$-embedded} if the identity inclusion $X_\beta\to E_\beta$ is a topological embedding. It is easy to see that $X$ is $\beta$-embedded in $E$ if and only if for any barrel $B\subseteq X$ there exists a barrel $D\subseteq E$ such that $D\cap X\subseteq B$.

\begin{proposition}\label{p:beta-emb}
A subspace $X$ of a locally convex space $E$ is $\beta$-embedded if one of the following conditions is satisfied:
\begin{enumerate}
\item[{\rm (i)}] $X$ is complemented in $E$;
\item[{\rm(ii)}] $X$ is barrelled;
\item[{\rm(iii)}] $X$ and $E$ are $\beta$-Banach and $X_\beta$ is closed in $E_\beta$.
\end{enumerate}
\end{proposition}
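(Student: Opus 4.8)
The plan is to verify each of the three sufficient conditions in turn, using the characterization that $X$ is $\beta$-embedded in $E$ if and only if for every barrel $B\subseteq X$ there is a barrel $D\subseteq E$ with $D\cap X\subseteq B$. In each case I would start with an arbitrary barrel $B\subseteq X$ and manufacture a barrel $D$ in $E$ with the required trace property.

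For (i), suppose $P\colon E\to X$ is a continuous projection onto $X$. Given a barrel $B\subseteq X$, set $D:=P^{-1}(B)$. Then $D$ is absolutely convex and absorbing (since $X=P(E)$ and $B$ is absorbing in $X$), and it is closed because $P$ is continuous and $B$ is closed in $X$ (closedness in $X$ suffices here because $P$ maps into $X$ continuously); hence $D$ is a barrel in $E$. Finally, for $x\in D\cap X$ we have $P(x)=x\in B$, so $D\cap X\subseteq B$, as desired.

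For (ii), if $X$ is barrelled then $X_\beta=X$ carries the original topology, so it suffices to show that the inclusion $X\to E_\beta$ is a topological embedding, i.e.\ that every neighbourhood of zero in $X$ contains the trace of a barrel of $E$. But given any neighbourhood $V$ of zero in $X$, pick a barrel $B\subseteq X$ with $B\subseteq V$ (possible since $X$ is barrelled, so barrels form a neighbourhood base); now take the closure $D$ in $E$ of the absolutely convex hull of $B$. Then $D$ is a barrel in $E$, and one checks $D\cap X\subseteq \overline{B}^{\,X}=B\subseteq V$ using that $B$ is already closed and absolutely convex in $X$ and that $X$ carries a weaker topology than $E$ — actually the cleanest route is: $D\cap X$ is an absolutely convex closed absorbing set in $X$ containing $B$; since in a barrelled space the topology is determined by barrels, and $B$ is the given barrel, one reduces to showing $D\cap X\subseteq B$, which follows because every $\sigma(X,X')$-continuous functional bounded by $1$ on $B$ is $E$-continuous (as $X'\subseteq E'$) and hence bounded by $1$ on $D$, whence on $D\cap X$; taking the intersection over all such functionals recovers $B$ by the bipolar theorem. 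So $X$ is $\beta$-embedded.

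For (iii), both $X_\beta$ and $E_\beta$ are Banach spaces and $X_\beta$ is a closed linear subspace of $E_\beta$; a closed linear subspace of a Banach space always carries the subspace topology, so the inclusion $X_\beta\to E_\beta$ is automatically a topological embedding, which is exactly the definition of $\beta$-embedded. The main obstacle throughout is the interplay between ``closed in $X$'' and ``closed in $E$'': one must be careful that the barrel $B\subseteq X$ need only be closed in the (possibly strictly weaker) topology of $X$, so the honest verifications in (i) and (ii) go through the dual-pairing/bipolar description of barrels (a barrel is the polar of a $\sigma$-bounded set, and $X'$ sits inside $E'$) rather than through naive topological closure arguments; once that is set up, all three cases are short.
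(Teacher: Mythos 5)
Your part (i) is correct and coincides with the paper's argument: $D=P^{-1}(B)$ is a barrel in $E$ whose trace on $X$ is $B$. The problems are in (ii) and (iii).

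In (ii) the set you construct is not a barrel of $E$. If $B$ is a barrel of $X$ (already absolutely convex and closed in $X$), then $D=\overline{\acx(B)}^{\,E}=\overline{B}^{\,E}$ is closed and absolutely convex in $E$, but it need not be \emph{absorbing} in $E$: if $X$ is a proper closed subspace of $E$ (say a closed hyperplane), then $\overline{B}^{\,E}\subseteq\overline{X}^{\,E}=X\ne E$, so $D$ absorbs no point outside $X$. The subsequent bipolar discussion only addresses the inclusion $D\cap X\subseteq B$, which was never the problematic part. Moreover, your argument uses barrelledness of $X$ only to say that barrels form a neighbourhood base of $X$, which is not where the hypothesis does its work. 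The correct (and much shorter) route, which is the paper's: since $X$ is barrelled, the barrel $B$ is a neighbourhood of zero in $X$, hence $B\supseteq U\cap X$ for some neighbourhood $U$ of zero in $E$; now choose a \emph{closed absolutely convex neighbourhood} $D$ of zero in $E$ with $D\subseteq U$ (such sets form a base and are barrels of $E$), and then $D\cap X\subseteq B$.

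In (iii) your justification is circular. The assertion that ``a closed linear subspace of a Banach space always carries the subspace topology'' presupposes exactly what must be proved, namely that the intrinsic topology $\beta(X,X')$ of $X_\beta$ coincides with the topology induced on $X$ by $\beta(E,E')$; a priori these could differ, since $X_\beta$ comes with its own complete norm. The missing ingredients are: (a) the identity inclusion $X_\beta\to E_\beta$ is continuous (because the trace on $X$ of a barrel of $E$ is a barrel of $X$), and (b) the Banach Open Mapping Principle applied to this continuous injective operator between Banach spaces with closed image, which forces it to be a topological embedding. Once you insert these two steps, (iii) is complete and agrees with the paper's proof.
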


\begin{proof}
Given a barrel $B\subseteq X$, we should find a barrel $D\subseteq E$ such that $D\cap X\subseteq B$.
\smallskip

(i) If $X$ is complemented in $E$, then there exists a linear continuous operator $R:E\to X$ such that $R(x)=x$ for all $x\in X$. In this case the set $D=R^{-1}(B)$ is a barrel in $E$ with $D\cap X=B$.
\smallskip

(ii) If $X$ is barrelled, then the barrel $B$ is a neighborhood of zero. Since $X$ is a subspace of $E$, there exists a barrel neighborhood $D\subseteq E$ of zero such that $D\cap X\subseteq B$.
\smallskip

(iii) Assume that the spaces $X$ and $E$ are $\beta$-Banach and $X_\beta$ is closed in $E_\beta$. Then the identity inclusion $I:X_\beta\to E_\beta$ is a continuous injective operator between Banach spaces such that the image $I(X_\beta)$ is closed in $E_\beta$. By the Banach Open Mapping Principle, the operator $I:X_\beta\to E_\beta$ is a topological embedding.
\end{proof}

In the next proposition we give some sufficient conditions on a subspace of a  strong  Gelfand--Phillips  space to have the  strong Gelfand--Phillips property.

\begin{proposition} \label{p:sGP-hereditary}
Assume that a locally convex space $E$ is strongly Gelfand--Phillips. Then:
\begin{enumerate}
\item[{\rm (i)}] Every $\beta$-embedded subspace of $E$ is strongly Gelfand--Phillips.
\item[{\rm (ii)}] Every barrelled subspace of $E$ is strongly Gelfand--Phillips.
\item[{\rm (iii)}] If $E_\beta$ is barrelled (for example, $E$ is $\beta$-Banach), then $E_\beta$ is strongly Gelfand--Phillips.
\end{enumerate}
\end{proposition}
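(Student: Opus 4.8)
The plan is to take the $w^\ast$-null witnessing sequence $\{\chi_n\}_{n\in\w}\subseteq E'$ supplied by the strong Gelfand--Phillips property of $E$ (Definition \ref{def:GP-strong}) and re-use it, after an obvious restriction, on the subspace in (i)--(ii) and on $E_\beta$ itself in (iii). The underlying principle is elementary: if $F$ is a locally convex space whose underlying vector space is a linear subspace of that of $E$ (possibly $F=E$ equipped with a finer topology) and $\psi_n:=\chi_n{\restriction}_F$, then $\{\psi_n\}_{n\in\w}$ is $\sigma(F',F)$-null and $\|\psi_n\|_B=\|\chi_n\|_B$ for every $B\subseteq F$; hence $\{\psi_n\}$ witnesses the strong Gelfand--Phillips property of $F$ as soon as one knows that each barrel-bounded, non-barrel-precompact subset of $F$ is still barrel-bounded and non-barrel-precompact when regarded inside $E$. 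So everything reduces to comparing barrel-boundedness and barrel-precompactness of sets in $F$ and in $E$.

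For (i), let $X$ be a $\beta$-embedded subspace of $E$. Barrel-boundedness passes from $X$ to $E$ for free, since $U\cap X$ is a barrel of $X$ for every barrel $U\subseteq E$. For non-barrel-precompactness I would argue through Proposition \ref{p:lcs-precompact}: if $B\subseteq X$ is not barrel-precompact, then, working in $X_\beta$, it contains an infinite $U$-separated subset $D$ for some barrel $U\subseteq X$; by $\beta$-embeddedness pick a barrel $V\subseteq E$ with $V\cap X\subseteq U$. Since any difference of two points of $D$ lies in $X$, it cannot lie in $V$ (otherwise it would lie in $V\cap X\subseteq U$), so $D$ is $V$-separated in $E$; by Proposition \ref{p:lcs-precompact} again, $B$ is not barrel-precompact in $E$. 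Thus $\|\chi_n{\restriction}_X\|_B=\|\chi_n\|_B\not\to 0$, and $\{\chi_n{\restriction}_X\}_{n\in\w}$ witnesses the strong Gelfand--Phillips property of $X$. Item (ii) is then immediate, because by Proposition \ref{p:beta-emb}(ii) a barrelled subspace is automatically $\beta$-embedded.

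For (iii), assume $E_\beta$ is barrelled and take $F=E_\beta$, $\psi_n=\chi_n$ (each $\chi_n$ is $\beta(E,E')$-continuous, and convergence of $\{\chi_n\}$ to $0$ in $\sigma((E_\beta)',E_\beta)$ is just pointwise convergence on the same underlying set $E$). Two observations are needed. First, since $\beta(E,E')$ is finer than the original topology of $E$, every barrel of $E$ is still a barrel of $E_\beta$; consequently a barrel-bounded subset of $E_\beta$ is absorbed by every barrel of $E$, i.e.\ is barrel-bounded in $E$. Second, $E_\beta$ being barrelled gives $(E_\beta)_\beta=E_\beta$, so "barrel-bounded, non-barrel-precompact in $E_\beta$" means simply "bounded, non-precompact in the barrelled space $E_\beta$", and such a set is in particular not precompact in $E_\beta$, which is exactly "not barrel-precompact in $E$". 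Both hypotheses of the general principle are therefore met, so $\{\chi_n\}$ witnesses the strong Gelfand--Phillips property of $E_\beta$; the parenthetical special case follows since a $\beta$-Banach space has $E_\beta$ Banach, hence barrelled.

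I expect the main obstacle to be the transfer of separatedness in (i): $\beta$-embeddedness controls barrels only from the side of $X$, and one must be careful that it nonetheless forces an infinite separated family for a genuine barrel of the ambient space $E$; the observation that all the relevant differences already belong to $X$ is precisely what makes this work. In (iii) the analogous care is that barrelledness of $E_\beta$ is exactly what is needed to identify $(E_\beta)_\beta$ with $E_\beta$ (and to pull barrels of $E$ up to neighbourhoods of zero in $E_\beta$); everything else is routine bookkeeping with the definitions of barrel, barrel-bounded and barrel-precompact.
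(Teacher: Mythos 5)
Your proof is correct, and for (i)--(ii) it is essentially the paper's own argument: restrict the witnessing sequence to the subspace, note that barrel-boundedness transfers by continuity of the inclusion, and transfer non-barrel-precompactness via Proposition \ref{p:lcs-precompact} by pushing a $U$-separated set through a barrel $V\subseteq E$ with $V\cap X\subseteq U$, using exactly the observation that the relevant differences lie in $X$. For (iii) you take a slightly more direct route: the paper passes through the operator characterization of Theorem \ref{t:char-sGP} (transporting an infinite barrel-separated set from $E_\beta$ down to $E$ and applying the operator $T$), whereas you argue straight from Definition \ref{def:GP-strong} by re-using the sequence $\{\chi_n\}$ on $E_\beta$ and checking that a barrel-bounded, non-barrel-precompact subset of the barrelled space $E_\beta$ remains so in $E$; both versions rest on the same fact that the barrels of $E$ are barrels of $E_\beta$ and, by barrelledness, form a neighbourhood base there, so the difference is only in packaging.
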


\begin{proof}
(i) Fix a weak$^\ast$ null sequence $\{\chi_n\}_{n\in\w}$ in $E'$ such that $\|\chi_n\|_A \not\to 0$ for every barrel-bounded set $A\subseteq E$ which is not  barrel-precompact. Let $X$ be a $\beta$-embedded subspace of $E$. Since the identity operator $I:X\to E$ is continuous, the sequence $(\chi_n\circ I)_{n\in\w}$ is null in the weak$^\ast$ topology on $X'$. We claim that the weak$^\ast$ null sequence $(\chi_n\circ I)_{n\in\w}$ witnesses the strong Gelfand--Phillips property of $X$. To this end, let $A$ be an infinite barrel-bounded but not barrel-precompact subset of $X$. Since $I$ is continuous the set $I(A)$ is barrel-bounded in $E$. To show that $I(A)$ is not barrel-precompact in $E$, we observe that, by Proposition \ref{p:lcs-precompact}, the set $A$ contains an infinite subset $D$ which is $B$-separated for some barrel $B\subseteq X$.  Since $X$  is $\beta$-embedded, there exists a barrel $\hat B\subseteq E$ such that $\hat B\cap X\subseteq B$. Observe that $I(D)$ is also $\hat B$-separated in $E$. Therefore,  by Proposition \ref{p:lcs-precompact},  $I(A)$ is not barrel-precompact in $E$. Now the choice of $(\chi_n)_{n\in\w}$ implies
$
\|\chi_n\circ I\|_A=\|\chi_n\|_{I(A)}\not\to 0.
$
Thus $X$ is strongly Gelfand--Phillips.
\vskip3pt

(ii) follows from (i) and  Proposition \ref{p:beta-emb}(ii).
\smallskip

(iii) Assume that the space $E_\beta$ is barrelled. By Theorem \ref{t:char-sGP}, there is  a continuous operator $T:E\to C_p^0(\w)$ such that for any infinite barrel-bounded barrel-separated set $D$ in $E$, the set $T(D)$ is not  precompact in the Banach space $c_0$. Set ${\bar T}:= T\circ i: E_\beta\to C_p^0(\w)$, where $i: E_\beta\to E$ is  the identity inclusion. Now take any infinite barrel-bounded barrel-separated subset $D\subseteq E_\beta$. Find a barrel $B\subseteq E_\beta$ such that $D$ is $B$-separated.  Since $E_\beta$ is barrelled, $B$ is a neighborhood of zero in $E_\beta$. By the definition of $E_\beta$, we can assume that $B$ is a barrel also in $E$. Hence $D$ is an infinite barrel-bounded barrel-separated subset of $E$. Then the image $T\circ i(D)=T(D)$  is not  precompact in  $c_0$. Thus, by Theorem  \ref{t:char-sGP}, $E_\beta$ is strongly Gelfand--Phillips.
\end{proof}

\begin{proposition} \label{p:finite-prod-sGP}
If $E$ and $L$ are locally convex spaces, then $E\times L$ is strongly Gelfand--Phillips if and only if so are $E$ and $L$.
\end{proposition}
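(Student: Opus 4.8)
The plan is to prove both directions by exploiting the characterization of the strong Gelfand--Phillips property given in Theorem \ref{t:char-sGP}(iii), namely the existence of a suitable continuous operator into $C_p^0(\w)$, together with elementary facts about barrels, barrel-bounded and barrel-separated sets in products.

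\textbf{The ``if'' direction.} Suppose $E$ and $L$ are strongly Gelfand--Phillips. By Theorem \ref{t:char-sGP} there are continuous operators $T_E\colon E\to C_p^0(\w)$ and $T_L\colon L\to C_p^0(\w)$ such that $T_E(D)$ (resp.\ $T_L(D)$) is non-precompact in $c_0$ whenever $D$ is an infinite barrel-bounded barrel-separated subset of $E$ (resp.\ of $L$). First I would record the easy facts about the product: the projections $\pr_E\colon E\times L\to E$ and $\pr_L\colon E\times L\to L$ are continuous, open, and carry barrels to barrels (more precisely, if $U\subseteq E$ and $V\subseteq L$ are barrels then $U\times V$ is a barrel in $E\times L$, and $\pr_E^{-1}(U)$, $\pr_L^{-1}(V)$ are barrels); consequently a subset of $E\times L$ is barrel-bounded iff both coordinate projections are barrel-bounded, and if $A\subseteq E\times L$ is barrel-separated (say $(U\times V)$-separated) then $\pr_E(A)$ need not itself be barrel-separated, but an infinite $A$ must have either $\pr_E(A)$ or $\pr_L(A)$ containing an infinite barrel-separated subset — indeed if $A$ is $(U\times V)$-separated and both $\pr_E(A)$ and $\pr_L(A)$ were barrel-precompact, then by Proposition \ref{p:lcs-precompact} every $U$-separated subset of $\pr_E(A)$ and every $V$-separated subset of $\pr_L(A)$ would be finite, which by a pigeonhole/Ramsey argument forces $A$ to be finite, a contradiction. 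Now define $T\colon E\times L\to C_p^0(\w)$ by interleaving $T_E\circ\pr_E$ and $T_L\circ\pr_L$ on the even and odd coordinates; this is continuous and takes values in $C_p^0(\w)$. Given an infinite barrel-bounded barrel-separated $D\subseteq E\times L$, one of $\pr_E(D)$, $\pr_L(D)$ contains an infinite barrel-bounded barrel-separated subset $D'$; since $T_E(D')$ (or $T_L(D')$) is non-precompact in $c_0$ and the interleaving embeds this non-precompactness into $T(D)$, Proposition \ref{p:precompact-c0} gives that $T(D)$ is non-precompact in $c_0$. By Theorem \ref{t:char-sGP}, $E\times L$ is strongly Gelfand--Phillips.

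\textbf{The ``only if'' direction.} Suppose $E\times L$ is strongly Gelfand--Phillips; I claim $E$ is (and symmetrically $L$). One route: $E$ is complemented in $E\times L$ via the inclusion $x\mapsto(x,0)$ and the projection $\pr_E$; hence by Proposition \ref{p:beta-emb}(i) the subspace $E\times\{0\}$ is $\beta$-embedded in $E\times L$, and then by Proposition \ref{p:sGP-hereditary}(i) it is strongly Gelfand--Phillips, so $E$ is too. This is the cleanest argument and I would use it, reducing the whole direction to the two already-proved propositions.

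\textbf{Main obstacle.} The ``if'' direction is where care is needed: the subtle point is that barrel-separatedness of $D\subseteq E\times L$ does not pass directly to the projections, so the pigeonhole step extracting an infinite barrel-separated subset of one projection — and verifying that the interleaving operator genuinely preserves the resulting non-precompactness in $c_0$ (rather than merely boundedness) — is the technical heart. The ``only if'' direction is essentially immediate from the complementation results already established.
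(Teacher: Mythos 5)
Your proof is correct. The ``only if'' direction is exactly the paper's argument (complementation, then Proposition \ref{p:beta-emb}(i) and Proposition \ref{p:sGP-hereditary}(i)), and the ``if'' direction rests on the same two ideas as the paper's: interleave the witnesses coming from $E$ and $L$, and show that at least one coordinate projection of a bad set in $E\times L$ is itself bad. The difference is in execution. The paper works directly with the definition: it takes a barrel-bounded, non-barrel-precompact $B\subseteq E\times L$, notes $B\subseteq B_E\times B_L$, and observes that if both projections were barrel-precompact then $B_E+B_L$ would be covered by $(F_E+F_L)+D$ for any barrel $D$ (using that $D\cap E$ and $D\cap L$ are barrels), so $B$ would be barrel-precompact --- no combinatorics needed. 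You instead pass through Theorem \ref{t:char-sGP}(iii), which forces you to manipulate barrel-separated sets and to extract an infinite barrel-separated subset of one projection via a Ramsey/pigeonhole argument. That step is fine (colour pairs of $D$ by which coordinate of the difference escapes $U\times V$, after replacing an arbitrary separating barrel $W$ by the product barrel $\tfrac12\bigl((W\cap E)\times(W\cap L)\bigr)\subseteq W$), but it is avoidable: you could simply note that if both projections were barrel-precompact then $\pr_E(D)\times\pr_L(D)\supseteq D$ would be barrel-precompact, contradicting Proposition \ref{p:lcs-precompact}. One cosmetic remark: the parenthetical claim that the projections ``carry barrels to barrels'' is not literally true (images of barrels need not be closed), but you never use it in that form, only the correct statements about products and preimages that immediately follow it.
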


\begin{proof}
Assume that $E\times L$ is a strongly Gelfand--Phillips space. By Proposition \ref{p:beta-emb}(i), $E$ and $L$ are $\beta$-embedded in $E\times L$ and therefore, by Proposition \ref{p:sGP-hereditary}(i), $E$ and $L$  are strongly Gelfand--Phillips.

Conversely, let $E$ and $L$ be strongly Gelfand--Phillips spaces. Choose weak$^\ast$ null sequences $(\chi_n)_{n\in\w} \subseteq E'$ and $(\eta_n)_{n\in\w} \subseteq L'$ witnessing the strong  Gelfand--Phillips property. Identify $E\times L$ with the direct sum $E\oplus L$ of the spaces $E$ and $L$.  Consider the null sequence $\{\xi_n\}_{n\in\w}\subseteq (E\oplus L)'_{w^\ast}$ defined by $\xi_{2n}:=\chi_n$ and $\xi_{2n+1}=\eta_n$ for every $n\in\w$. We claim that $\|\xi_n\|_B\not\to0$ for any  barrel-bounded set $B\subseteq E\times L$ which is not barrel-precompact. Given such a set $B$, observe that the projections $B_E$ and $B_L$ of $B$ onto $E$ and $L$, respectively, are barrel-bounded. Indeed, if $U$ is a barrel in $E$, then $U\times L$ is a barrel in $E\times L$. Therefore there is $a>0$ such that $B\subseteq a(U\times L)$. So $B_E \subseteq aU$ and hence $B_E$ is barrel-bounded.  Analogously one can prove that $B_L$ is barrel-bounded. We claim that at least one of the sets $B_E$ or $B_L$ is not barrel-precompact. To derive a contradiction, assume that $B_E$ and $B_L$ are barrel-precompact. Observe that for every barrel $D$ in $E\oplus L$, the intersection $D\cap E$ and $D\cap L$ are barrels in $E$ and $L$, respectively. By the barrel-precompactness of the sets $B_E$ and $B_L$, there are finite sets $F_E\subseteq E$ and $F_L\subseteq L$ such that $B_E\subseteq F_E+ \tfrac{1}{2} D$ and $B_L\subseteq F_L+\frac{1}{2}D$. Then $B\subseteq B_E+B_F\subseteq (F_E+F_L)+D$ and hence $B$ is barrel-precompact in $E\oplus L$, which contradicts the choice of $B$. This contradiction shows that one of the sets $B_E$ and $B_L$ is not barrel-precompact. Without loss of generality we assume that $B_E$ is not barrel-precompact. Then the choice of $(\chi_n)_{n\in\w}$ ensures that
$
\|\xi_{2n}\|_{B}=\| \chi_n\|_{B_E} \not\to 0.
$
Thus the sequence $(\xi_n)_{n\in\w}$ witnesses the strong Gelfand--Phillips  property of $E\times L$.
\end{proof}

\begin{remark}
In Example \ref{exa:c-product-sGP} we shall construct a strongly Gelfand--Phillips Banach space whose countable power is not strongly Gelfand--Phillips, and in Example~\ref{exa:closed-non-sGP} we construct a strongly Gelfand--Phillips locally convex space containing a closed subspace which is not strongly Gelfand--Phillips.\qed
\end{remark}

\begin{problem} Let $X$ be a closed subspace of a locally convex space $Y$.
\begin{itemize}
\item[{\rm (i)}] Is the quotient space $Y/X$ strongly Gelfand--Phillips if so is the space $Y$?
\item[{\rm (ii)}] Is $Y$ strongly Gelfand--Phillips if $X$ and $Y/X$ are strongly Gelfand--Phillips?
\end{itemize}
\end{problem}

{
We finish this section with the following characterization of Josefson--Nissenzweig spaces in the terms of operators to spaces with the (strong) Gelfand--Phillips property. We recall (see \cite{BG-GP-lcs}) that an operator $T:X\to Y$ between locally convex spaces is {\em $\beta$-to-$\beta$ precompact} if for any barrel-bounded set $B\subseteq X$ the image $T(B)$ is barrel-precompact in $Y$.

\begin{theorem} \label{t:JNP-characterization}
For a locally convex space $E$ 
the following conditions are equivalent:
\begin{enumerate}
\item[{\rm (i)}] $E$ has the Josefson-Nissenzweig property;
\item[{\rm (ii)}]  there exists a continuous operator $T:E\to C_p^0(\w)$, which is not $\beta$-to-$\beta$ precompact;
\item[{\rm (iii)}] for some locally convex space $Y$ with the strong Gelfand--Phillips property, there exists a continuous operator $T:E\to Y$, which is not $\beta$-to-$\beta$ precompact;
\item[{\rm (iv)}] for some locally convex space $Y$ with the Gelfand--Phillips  property, there exists a continuous operator $T:E\to Y$, which is not $\beta$-to-$\beta$ precompact.
\end{enumerate}
\end{theorem}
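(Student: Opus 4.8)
The plan is to establish the cycle (i)$\Rightarrow$(ii)$\Rightarrow$(iii)$\Rightarrow$(iv)$\Rightarrow$(i), where the two middle arrows are essentially free and the work concentrates on the two end implications. For (i)$\Rightarrow$(ii), unwinding Definition~\ref{def:JNP} the JNP produces a $\sigma(E',E)$-null sequence $\{\chi_n\}_{n\in\w}$ in $E'$ that is not $\beta^\ast(E',E)$-null. Since a neighborhood base at zero for $\beta^\ast(E',E)$ consists of polars of barrel-bounded sets, this says precisely that there is a barrel-bounded set $B\subseteq E$ with $\|\chi_n\|_B\not\to 0$. As $\chi_n(x)\to 0$ for every $x\in E$, the formula $T(x):=\big(\chi_n(x)\big)_{n\in\w}$ defines a map $T\colon E\to C_p^0(\w)$ which is continuous because each of its coordinates is the continuous functional $\chi_n$. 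It then remains to see that $T$ is not $\beta$-to-$\beta$ precompact, and $B$ is the witnessing set: by Lemma~\ref{l:JNP-norming}(i) we have $\big(C_p^0(\w)\big)_\beta=c_0$, so $T(B)$ is barrel-precompact in $C_p^0(\w)$ exactly when it is precompact in $c_0$, which by Proposition~\ref{p:precompact-c0} is equivalent to $\|e'_n\|_{T(B)}\to 0$, i.e. $\|\chi_n\|_B\to 0$, and this fails by the choice of $B$.

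The implications (ii)$\Rightarrow$(iii) and (iii)$\Rightarrow$(iv) are immediate. For the first, take $Y=C_p^0(\w)$, which is strongly Gelfand--Phillips by Corollary~\ref{c:sGP-C0p}, and keep the same operator $T$. For the second, recall that the strong Gelfand--Phillips property implies the Gelfand--Phillips property (a universal weak$^\ast$ null sequence works for each individual barrel-bounded non-barrel-precompact set in Theorem~\ref{t:GP-lcs-char}), so again the same $Y$ and $T$ serve.

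For (iv)$\Rightarrow$(i), suppose $Y$ is a Gelfand--Phillips space and $T\colon E\to Y$ is a continuous operator that is not $\beta$-to-$\beta$ precompact, so that some barrel-bounded $B\subseteq E$ has $T(B)$ not barrel-precompact in $Y$. First I would check that $T(B)$ is barrel-bounded in $Y$: for a barrel $U\subseteq Y$, the preimage $T^{-1}(U)$ is a barrel in $E$ (closedness, convexity and balancedness are preserved by a continuous linear map, and absorbency holds since $U$ absorbs points of $Y$), hence $B\subseteq nT^{-1}(U)$ for some $n\in\w$ and $T(B)\subseteq nU$. Now apply Theorem~\ref{t:GP-lcs-char} to the barrel-bounded, non-barrel-precompact set $T(B)$ in the Gelfand--Phillips space $Y$ to obtain a weak$^\ast$ null sequence $\{\eta_n\}_{n\in\w}$ in $Y'$ with $\|\eta_n\|_{T(B)}\not\to 0$. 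Put $\chi_n:=\eta_n\circ T\in E'$; then $\{\chi_n\}_{n\in\w}$ is $\sigma(E',E)$-null by continuity of $T$, while $\|\chi_n\|_B=\|\eta_n\|_{T(B)}\not\to 0$ with $B$ barrel-bounded shows $\{\chi_n\}_{n\in\w}$ is not $\beta^\ast(E',E)$-null. Thus the identity $\big(E',\sigma(E',E)\big)\to\big(E',\beta^\ast(E',E)\big)$ is not sequentially continuous and $E$ has the JNP.

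The only genuinely delicate point is inside (i)$\Rightarrow$(ii): checking that $T$ actually lands in $C_p^0(\w)$ and is continuous for the pointwise topology, and translating ``$T(B)$ not barrel-precompact'' through Lemma~\ref{l:JNP-norming}(i) and Proposition~\ref{p:precompact-c0} back into $\|\chi_n\|_B\not\to 0$. Everything else is bookkeeping with the definitions of barrel, barrel-bounded and barrel-precompact.
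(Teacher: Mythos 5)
Your proof is correct and follows the same overall cycle (i)$\Rightarrow$(ii)$\Rightarrow$(iii)$\Rightarrow$(iv)$\Rightarrow$(i) as the paper; the implications (ii)$\Rightarrow$(iii)$\Rightarrow$(iv)$\Rightarrow$(i) match the paper's argument essentially verbatim (your explicit verification that $T(B)$ is barrel-bounded in $Y$, needed before invoking Theorem~\ref{t:GP-lcs-char}, is a detail the paper leaves implicit). The one substantive difference is in (i)$\Rightarrow$(ii): the paper simply cites the equivalence (i)$\Leftrightarrow$(ii) from \cite{BG-JNP}, whereas you give a self-contained argument by unwinding Definition~\ref{def:JNP}, building $T(x)=(\chi_n(x))_{n\in\w}$, and translating non-barrel-precompactness of $T(B)$ through Lemma~\ref{l:JNP-norming}(i) and Proposition~\ref{p:precompact-c0}; this is correct and has the advantage of making the theorem independent of the external reference for the direction actually needed to close the cycle.
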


\begin{proof}
The equivalence (i)$\Leftrightarrow$(ii) is proved in \cite{BG-JNP}.

The implication (ii)$\Rightarrow$(iii) follows from Corollary \ref{c:sGP-C0p}, and (iii)$\Rightarrow$(iv) is trivial.
\smallskip

To prove that (iv)$\Ra$(i), assume that $E$ admits a continuous operator $T:E\to Y$ into a locally convex space $Y$ with the Gelfand--Phillips property such that $T$ is not $\beta$-to-$\beta$ precompact. Then for some barrel-bounded set $B\subseteq X$, the image $T(B)$ is not barrel-precompact in $Y$. Since $Y$ has the $(GP)$ property, there exists a null sequence $\{\mu_n\}_{n\in\w}\subseteq Y'_{w^\ast}$ such that  $\|\mu_n\|_{T(B)}\not\to 0$. For every $n\in\w$, consider the functional $\eta_n=\mu_n\circ T\in E'$ and observe that the sequence $\{\eta_n\}_{n\in\w}$ converges to zero in the topology $\sigma(E',E)$. However, since
\[
\| \eta_n\|_B =  \|\mu_n\|_{T(B)} \not\to 0 \quad \mbox{ as } \; n\to\infty,
\]
the sequence $\{ \eta_n\}_{n\in\w}$ does not converge to zero in the topology $\beta^\ast(E',E)$. This means that $E$ has the JNP.
\end{proof}

}


\section{Function spaces with the strong Gelfand--Phillips property} \label{sec:fs-sGP}


Let $X$ be a set, and let $f:X\to\IF$ be a function to the field $\ff$ of real or complex numbers. For a subset $A\subseteq X$ and $\e>0$, let
\[
\|f\|_A:=\sup(\{|f(x)|:x\in A\}\cup\{0\})\in [0,\infty],
\]
and if $\FF$ is a subfamily of $\ff^X$, we set
\[
[A;\e]_{\FF}:=\{f\in \FF:\|f\|_A\le\e\}.
\]
If the family $\FF$ is clear from the context, then we shall omit the subscript $\FF$ and write $[A;\e]$ instead of $[A;\e]_{\FF}$.
A family $\mathcal S$ of subsets of  $X$ is {\em directed} if for any sets $A,B\in\mathcal S$ the union $A\cup B$ is contained in some set $C\in\mathcal S$.

For a Tychonoff space $X$, we denote by $C(X)$ the space of all continuous functions $f:X\to \IF$ on $X$. A subset $A\subseteq X$ is called {\em functionally bounded} if $\|f\|_A<\infty$ for any continuous function $f\in C(X)$. A Tychonoff space $X$ is {\em pseudocompact} if $X$ is functionally bounded in $X$.

A Tychonoff space $X$ is defined to be a {\em $\mu$-space} if every functionally bounded subset of $X$ has compact closure in $X$. We denote by $\upsilon X$, $\mu X$ and $\beta X$ the Hewitt completion (=realcompactification), the Diedonn\'e completion and  the Stone-\v Cech compactification of $X$, respectively. It is known (\cite[8.5.8]{Eng}) that $X\subseteq \mu X\subseteq \upsilon X \subseteq \beta X$. Also it is known that all paracompact spaces and all realcompact spaces are Diedonn\'e complete and each Diedonn\'e complete space is a $\mu$-space, see \cite[8.5.13]{Eng}. On the other hand, each pseudocompact $\mu$-space is compact.

A topological space $X$ is {\em scattered} if each nonempty subspace of $X$ has an isolated point. For a topological space $X$, let $X^{(0)}:=X$ and let $X^{(1)}$ be the space of non-isolated points of $X$. For a non-zero ordinal $\alpha$, let $X^{(\alpha)}:=\bigcap_{\beta<\alpha}(X^{(\beta)})^{(1)}$. It is well known that a topological space $X$ is scattered if and only if $X^{(\alpha)}=\emptyset$ for some ordinal $\alpha$. The smallest ordinal $\alpha$ with $X^{(\alpha)}=\emptyset$ is called the {\em scattered height} of $X$.

For a Tychonoff space $X$, the space $C(X)$ carries many important {\em locally convex topologies}, i.e., topologies turning $C(X)$ into a locally convex space. For a locally convex topology $\TTT$ on $C(X)$, we denote by $C_\TTT(X)$ the space $C(X)$ endowed with the topology $\TTT$.

Each directed family $\mathcal S$ of functionally bounded sets in a Tychonoff space $X$ induces a locally convex topology $\TTT_{\mathcal{S}}$ on $C(X)$ whose neighborhood base at zero consists of the sets $[S;\e]$ where $S\in\mathcal{S}$ and $\e>0$. The topology $\TTT_{\mathcal{S}}$ is called {\em the topology of uniform convergence on sets of the family $\mathcal{S}$}. The topology $\TTT_{\mathcal{S}}$ is Hausdorff if and only if the union $\bigcup\mathcal{S}$ is dense in $X$.

If $\mathcal{S}$ is the family of all finite (resp. compact or functionally bounded) subsets of $X$, then the topology $\TTT_{\mathcal{S}}$ will be denoted by $\TTT_p$ (resp. $\TTT_k$ and  $\TTT_b$), and the function space $C_{\TTT_{\mathcal{S}}}(X)$ will be denoted by $C_p(X)$ (resp. $\CC(X)$ or $C_b(X)$).

\begin{lemma} \label{l:Ck=Cb}
A Tychonoff space $X$ is a $\mu$-space if and only if $\CC(X)=C_b(X)$.
\end{lemma}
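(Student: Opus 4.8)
The statement to prove is: a Tychonoff space $X$ is a $\mu$-space if and only if $\CC(X)=C_b(X)$, where $\CC(X)$ carries the compact-open topology and $C_b(X)$ carries the topology of uniform convergence on functionally bounded sets. Since both spaces have the same underlying set $C(X)$, the equality means equality as \emph{topological} spaces, i.e.\ the two topologies $\TTT_k$ and $\TTT_b$ coincide. Because every compact subset of a Tychonoff space is functionally bounded, the family of compact sets is always contained in the family of functionally bounded sets, so $\TTT_k\subseteq\TTT_b$ holds unconditionally. Thus the whole content of the lemma is the equivalence ``$X$ is a $\mu$-space'' $\iff$ ``$\TTT_b\subseteq\TTT_k$''.

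The plan is to establish each direction by comparing the basic neighborhoods $[B;\e]$ (for $B$ functionally bounded) with the basic neighborhoods $[K;\e]$ (for $K$ compact). For the \emph{forward} direction, assume $X$ is a $\mu$-space. Let $B\subseteq X$ be functionally bounded and $\e>0$; I want a compact $K$ with $[K;\e']\subseteq[B;\e]$ for suitable $\e'$. Since $X$ is a $\mu$-space, $K:=\overline{B}$ is compact, and clearly $\|f\|_B=\|f\|_{\overline B}$ for every $f\in C(X)$ by continuity, so in fact $[K;\e]=[B;\e]$ directly. Hence every $\TTT_b$-basic neighborhood is a $\TTT_k$-basic neighborhood, giving $\TTT_b\subseteq\TTT_k$ and therefore $\TTT_b=\TTT_k$, i.e.\ $\CC(X)=C_b(X)$.

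For the \emph{converse}, assume $\CC(X)=C_b(X)$, i.e.\ $\TTT_b\subseteq\TTT_k$, and let $B\subseteq X$ be functionally bounded; I must show $\overline{B}$ is compact. The neighborhood $[B;1]$ of $0$ in $C_b(X)$ is then also a $\TTT_k$-neighborhood, so there are a compact $K\subseteq X$ and $\delta>0$ with $[K;\delta]\subseteq[B;1]$. This says: whenever $f\in C(X)$ satisfies $\|f\|_K\le\delta$, then $\|f\|_B\le1$. Scaling, for every $f\in C(X)$ one gets $\|f\|_B\le\delta^{-1}\|f\|_K$ (apply the implication to $tf$ for appropriate $t>0$, or note the implication is positively homogeneous). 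Now suppose toward a contradiction that $\overline{B}$ is not compact; then, since $\overline B$ is not compact while $K$ is, pick a point $x\in\overline B\setminus K$ (if $\overline B\subseteq K$ then $\overline B$ would be a closed subset of a compact set, hence compact). By complete regularity there is $f\in C(X)$ with $f(x)=2\delta^{-1}$ and $f\restriction K\equiv 0$; then $\|f\|_K=0$ but $\|f\|_B=\|f\|_{\overline B}\ge f(x)=2\delta^{-1}>\delta^{-1}\cdot 0$, contradicting $\|f\|_B\le\delta^{-1}\|f\|_K$. Hence $\overline B\subseteq K$, so $\overline B$ is compact, and $X$ is a $\mu$-space.

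The only mildly delicate point—and the one I expect to be the ``main obstacle,'' though it is routine—is extracting from the single inclusion $[K;\delta]\subseteq[B;1]$ of \emph{basic} neighborhoods the clean domination $\|f\|_B\le\delta^{-1}\|f\|_K$ for \emph{all} $f\in C(X)$, and handling the case $\|f\|_K=0$ (where one needs $\|f\|_B=0$ as well, which the contradiction step above supplies). Once that is in place, the complete-regularity separation of a putative point of $\overline B\setminus K$ from $K$ finishes the argument. I would present the two directions in this order, remarking at the start that $\TTT_k\subseteq\TTT_b$ always, so that only the reverse containment is at stake.
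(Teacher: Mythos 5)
Your proof is correct and follows essentially the same route as the paper: the forward direction is immediate from $\|f\|_B=\|f\|_{\overline B}$ with $\overline B$ compact, and the converse extracts a compact $K$ with $[K;\delta]\subseteq[B;1]$ and uses a Urysohn-type function vanishing on $K$ but large at a point of $\overline B\setminus K$ to force $\overline B\subseteq K$. The only cosmetic difference is that you pass through the scaled inequality $\|f\|_B\le\delta^{-1}\|f\|_K$, whereas the paper applies the inclusion $[K;\e]\subseteq[B;1]$ to the separating function directly.
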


\begin{proof}
If $X$ is a $\mu$-space, then each closed functionally bounded set in $X$ is compact, which implies the equality $C_b(X)=\CC(X)$. Now assume conversely that $C_b(X)=\CC(X)$. Given any closed functionally bounded set $B\subseteq X$, consider the neighborhood of zero $[B;1]$ in $C_b(X)$. Since $C_b(X)=\CC(X)$, there exists a compact set $K\subseteq X$ and $\e>0$ such that $[K;\e]\subseteq[B;1]$. We claim that $B\subseteq K$. In the opposite case, by the Tychonoff property of $X$, we could find a continuous function $f:X\to\IR$ such that $f(K)\subseteq\{0\}$ and $f(x)>1$ for some point $x\in B\setminus K$. Then $f\in [K;\e]\subseteq [B;1]$ and hence $|f(x)|\le 1$, which contradicts the choice of $f$. This contradiction shows that $B\subseteq K$ and hence  $B$ is compact.
\end{proof}

Let $X$ be a dense subspace of a Tychonoff space $M$ (for example, $M=\mu X$, $\upsilon X$ or $\beta X$). Then the union $\bigcup\mathcal{S}$ of the directed  family of all finite (resp. compact) subsets of $X$ is dense in $M$. Therefore $\mathcal{S}$  naturally defines a Hausdorff locally convex vector topology on the space $C(M)$, which will be denoted by $\TTT_{p{\restriction}X}$ (resp. $\TTT_{k{\restriction}X}$). For simplicity of notations we shall denote the function spaces $C_{\TTT_{p{\restriction}X}}(M)$ and $C_{\TTT_{k{\restriction}X}}(M)$ by $C_{p{\restriction}X}(M)$ and $C_{k{\restriction}X}(M)$,  respectively.

For a Tychonoff space $X$ and a linear functional $\mu\in C(X)'$, the {\em support} $\supp(\mu)$ of $\mu$ is the set of all points $x\in X$ such that for every neighborhood $O_x\subseteq X$ of $x$ there exists a function $f\in C(X)$ such that $\mu(f)\ne0$ and $\supp(f)\subseteq O_x$, where $\supp(f)=\overline{\{x\in X:f(x)\ne 0\}}$. This definition implies that the support $\supp(\mu)$ is a closed subset of $X$. Although the next lemma is proved in \cite[Lemma~3.2]{BG-JNP} we add its short proof to make the paper self-contained.

\begin{lemma}\label{l:3.1}
Let $X$ be a Tychonoff space, and let $\mathcal{S}$ be a directed set of functionally bounded sets in $X$. If a functional $\mu\in C(X)'$ is continuous in the topology $\Tau_{\mathcal S}$, then $\supp(\mu)\subseteq\overline{S}$ for some set $S\in\mathcal S$ such that $[S;0]\subseteq\mu^{-1}(0)$.
\end{lemma}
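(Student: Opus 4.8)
The plan is to extract the set $S$ directly from the continuity of $\mu$ in the topology $\Tau_{\mathcal S}$ and then verify the two required properties. First I would use continuity of $\mu$ at $0$ to obtain a basic $\Tau_{\mathcal S}$-neighborhood $[S;\e]$ of zero with $S\in\mathcal S$ and $\e>0$ such that $\mu([S;\e])\subseteq\{z\in\IF:|z|<1\}$. Since $[S;0]\subseteq[S;\e]$ is a linear subspace of $C(X)$ on which $\mu$ is bounded (by $1$ in absolute value), homogeneity forces $\mu$ to vanish on $[S;0]$; this gives the inclusion $[S;0]\subseteq\mu^{-1}(0)$ at once.

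It remains to show $\supp(\mu)\subseteq\overline S$. I would argue by contraposition: suppose $x\in X\setminus\overline S$. By the Tychonoff property there is a neighborhood $O_x$ of $x$ with $O_x\cap\overline S=\emptyset$. Now take any $f\in C(X)$ with $\supp(f)\subseteq O_x$; then $f$ vanishes on a neighborhood of $\overline S$, so in particular $f\uhr S\equiv 0$, i.e. $f\in[S;0]$. Scaling, $nf\in[S;0]$ for every $n\in\NN$, and since $\mu$ vanishes on $[S;0]$ we get $\mu(f)=0$. As $f$ was an arbitrary function with support inside $O_x$, the point $x$ fails the defining condition for membership in $\supp(\mu)$, so $x\notin\supp(\mu)$. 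Taking contrapositive, $\supp(\mu)\subseteq\overline S$.

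I do not anticipate a serious obstacle here; the only point requiring a little care is that the basic neighborhood furnished by continuity is of the form $[S;\e]$ with a \emph{single} set $S\in\mathcal S$ (rather than a finite union), which is exactly where directedness of $\mathcal S$ is used: the neighborhood base of $\Tau_{\mathcal S}$ at zero consists precisely of the sets $[S;\e]$, $S\in\mathcal S$, $\e>0$, because finite unions of members of $\mathcal S$ are again contained in members of $\mathcal S$. Once that is noted, the rest is the routine homogeneity-plus-support bookkeeping sketched above.
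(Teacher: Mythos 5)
Your proposal is correct and follows essentially the same route as the paper's proof: continuity at zero yields a basic neighborhood $[S;\e]$ with a single $S\in\mathcal S$, homogeneity (equivalently, the identity $[S;0]=\bigcap_{n}[S;\e/n]$ used in the paper) gives $[S;0]\subseteq\mu^{-1}(0)$, and the support inclusion is the same argument stated contrapositively rather than by contradiction. No gaps.
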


\begin{proof}
By the continuity of $\mu$ in the topology $\Tau_{\mathcal S}$, there exist a set $S\subseteq \mathcal S$ and $\e>0$  such that $\mu([S;\e])\subseteq(-1,1)$. Then $$[S;0]=\bigcap_{n\in\IN}[S;\tfrac\e{n}]\subseteq\bigcap_{n\in\w}\mu^{-1}\big((-\tfrac1n,\tfrac1n)\big)=\mu^{-1}(0).$$ It remains to prove that $\supp(\mu)\subseteq\overline S$. In the opposite case we can find a function $f\in C(X)$ such that $\mu(f)\ne 0$ and $\supp(f)\cap \overline S=\emptyset$. On the other hand, $f\in [S;0]\subseteq \mu^{-1}(0)$ and hence $\mu(f)=0$. This contradiction shows that $\supp(\mu)\subseteq\overline S$.
\end{proof}


\begin{lemma}\label{l:3.2}
Let $X$ be a Tychonoff space, and let $D$ be a dense subset of $X$. If a linear functional $\mu\in C(X)'$ is continuous in the topology $\TTT_{k{\restriction}D}$, then $\supp(\mu)$ is a compact subset of $D$ and $[\supp(\mu);0]\subseteq\mu^{-1}(0)$.
\end{lemma}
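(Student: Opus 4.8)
The plan is to deduce this from Lemma \ref{l:3.1} applied to the directed family $\mathcal{S}$ of all compact subsets of $D$, which is exactly the family defining the topology $\TTT_{k{\restriction}D}$. Since $\mu$ is $\TTT_{k{\restriction}D}$-continuous, Lemma \ref{l:3.1} yields a compact set $K\subseteq D$ with $\supp(\mu)\subseteq\overline{K}$ and $[K;0]\subseteq\mu^{-1}(0)$. The set $K$ is already compact and contained in $D$; since $D$ is dense in $X$ (hence in particular $K$ is closed in $X$, being compact), we get $\overline{K}=K$, so $\supp(\mu)\subseteq K\subseteq D$. As $\supp(\mu)$ is closed in $X$ and contained in the compact set $K$, it is itself compact and contained in $D$. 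This already gives the first two assertions.

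For the inclusion $[\supp(\mu);0]\subseteq\mu^{-1}(0)$, note that $\supp(\mu)\subseteq K$ implies $[K;0]\subseteq[\supp(\mu);0]$, which is the wrong direction; so I would instead argue directly using the defining property of the support. Take any $f\in C(X)$ with $\|f\|_{\supp(\mu)}=0$, i.e. $f$ vanishes on $\supp(\mu)$. I claim $\mu(f)=0$. Suppose not. Since $\supp(\mu)\subseteq K$ is compact and $f$ vanishes on it while $f$ is continuous, the open set $U:=\{x\in X:|f(x)|<\e\}$ (for a small $\e>0$ to be chosen) is a neighborhood of $\supp(\mu)$; more usefully, one can write $f$ as a uniform-on-$K$ limit, or better, approximate: for any $\e>0$ the set $\{x:|f(x)|\ge\e\}$ is a closed subset of $X$ disjoint from $\supp(\mu)$. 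Using the standard localization argument (as in the proof of Lemma \ref{l:3.1}), partition $f$ via a function that is $1$ near $\supp(\mu)$; the cleanest route is: since $\supp(\mu)\subseteq K$ and $[K;0]\subseteq\mu^{-1}(0)$, and $f$ vanishes on $\supp(\mu)$ but perhaps not on all of $K$, replace $K$ by $\supp(\mu)$ in the support argument. Concretely, repeat the last paragraph of the proof of Lemma \ref{l:3.1} verbatim: if $\mu(f)\neq 0$ and $f$ vanishes on $\supp(\mu)$, then since the support is characterized by local non-triviality, there is a point $x\in\supp(\mu)$ every neighborhood of which carries a function $g$ with $\supp(g)$ inside that neighborhood and $\mu(g)\neq0$; choosing the neighborhood to be $\{y:|f(y)|<\text{small}\}$ does not immediately contradict anything, so instead I use the direct approach below.

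The cleaner way to get $[\supp(\mu);0]\subseteq\mu^{-1}(0)$: apply Lemma \ref{l:3.1} but observe that its proof actually gives more. In the proof, once we know $\supp(\mu)\subseteq\overline{S}$ for $S=K$ and $[K;0]\subseteq\mu^{-1}(0)$, we can shrink: let $f\in C(X)$ with $f\uhr\supp(\mu)\equiv 0$. For each $n$, the set $F_n:=\{x\in K:|f(x)|\ge\tfrac1n\}$ is compact and disjoint from $\supp(\mu)$. By the definition of support, no point of $K\setminus\supp(\mu)$ lies in $\supp(\mu)$, and a standard argument (covering $F_n$ by finitely many open sets on each of which $\mu$ "sees nothing", i.e. $[O;0]\subseteq\mu^{-1}(0)$ locally) shows $\mu(fh)=0$ for any $h\in C(X)$ supported in $X\setminus\supp(\mu)$. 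Writing $f$ as a limit in $\TTT_{k{\restriction}D}$ of functions supported off $\supp(\mu)$ — using Tychonoff normality to build bump functions equal to $1$ on $F_n$ and $0$ near $\supp(\mu)$ — and using $\TTT_{k{\restriction}D}$-continuity of $\mu$ together with the fact that $f\chi_{F_n}\to f$ uniformly on $K$, we conclude $\mu(f)=\lim_n\mu(f h_n)=0$. I expect the main obstacle to be precisely this last step: rigorously producing the approximating sequence of functions supported off $\supp(\mu)$ that converges to $f$ in the topology $\TTT_{k{\restriction}D}$, which requires that $f$ vanish on all of $\supp(\mu)$ (not merely on $K$) and the compactness of $K\subseteq D$ to control the uniform approximation; all other steps are formal consequences of Lemma \ref{l:3.1} and the density of $D$.
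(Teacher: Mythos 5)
The first assertion (that $\supp(\mu)$ is a compact subset of $D$) is handled correctly and exactly as in the paper: Lemma \ref{l:3.1} applied to the directed family of compact subsets of $D$ gives a compact $K\subseteq D$ with $\supp(\mu)\subseteq\overline K=K$ and $[K;0]\subseteq\mu^{-1}(0)$, and closedness of $\supp(\mu)$ in $X$ finishes it. You also correctly recognize that $[K;0]\subseteq\mu^{-1}(0)$ does not formally yield $[\supp(\mu);0]\subseteq\mu^{-1}(0)$ and that a genuine localization argument is needed.

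That localization argument, however, is where the proposal has a real gap --- one you flag yourself. The crux is the claim that $\mu(fh_n)=0$ when $fh_n$ vanishes near $\supp(\mu)$, and this does not follow ``by a standard argument'' from the definition of support, which is purely local: it only provides, for each single point $x\notin\supp(\mu)$, a neighborhood $O_x$ killing functionals supported inside $O_x$. To globalize, one must (a) cover the compact set $K\setminus U$ (where $U$ is a small neighborhood of $\supp(\mu)$ on which $|f|<\e$) by finitely many such $O_x$, and (b) decompose the function into summands each supported in a single $O_x$, which requires a partition of unity; since $X$ is only Tychonoff, the paper carries this out in $\beta X$ after a Tietze extension of $f{\restriction}_K$. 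Moreover, the leftover summand supported in $U$ is \emph{not} annihilated by $\mu$; it is only controlled quantitatively via the continuity estimate $\mu([K;\e])\subseteq(-1,1)$, yielding $|\mu(f_1)|\le 1$ against $\mu(f)=2$. Your limiting variant ($\|f-fh_n\|_K\le 1/n$, hence $\mu(fh_n)\to\mu(f)$ once one notes that $[K;0]\subseteq\mu^{-1}(0)$ and $\mu([K;\e])\subseteq(-1,1)$ make $\mu$ continuous for $\|\cdot\|_K$) is a viable reorganization, but it still rests entirely on the unproved annihilation claim, so the essential content of the lemma is missing. (Minor additional slips: the first attempted route is abandoned mid-argument; $f\chi_{F_n}$ is not continuous and should read $fh_n$; and ``$[O;0]\subseteq\mu^{-1}(0)$ locally'' states the wrong condition --- the support definition concerns functions \emph{supported in} $O$, not functions \emph{vanishing on} $O$.)
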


\begin{proof}
By the continuity of $\mu$ in the topology $\Tau_{k{\restriction}D}$, there exist a compact subset $K\subseteq D$ and $\e>0$ such that $\mu([K;\e])\subseteq(-1,1)$. By (the proof of) Lemma~\ref{l:3.1}, $\supp(\mu)\subseteq K$ and $[K;0]\subseteq\mu^{-1}(0)$. Since $\supp(\mu)$ is a closed subset of $X$, $\supp(\mu)$ is closed in $K$ and hence $\supp(\mu)$ is a compact subset of $D$.

It remains to prove that $[\supp(\mu);0]\subseteq\mu^{-1}(0)$. 
This can be done repeating the arguments of the proof of Lemma 3.3 in \cite{BG-JNP}, nevertheless we provide a detailed proof for the reader convenience. 
To derive a contradiction, assume that $[\supp(\mu);0]\not\subseteq\mu^{-1}(0)$ and hence there exists a continuous function $f\in C(X)$ such that $\mu(f)\ne 0$ but $f{\restriction}_{\supp(\mu)}=0$. Multiplying $f$ by a suitable constant, we can assume that $\mu(f)=2$. Embed the space $X$ into its Stone--\v{C}ech compactification $\beta X$. By the Tietze--Urysohn Theorem, there exists a continuous function $\bar f\in C(\beta X)$ such that $\bar f{\restriction}_K=f{\restriction}_K$. It follows from $[K;0]\subseteq\mu^{-1}(0)$ that $\mu(f)=\mu(\bar f{\restriction}_X)$.

Consider the open neighborhood $U=\{x\in \beta X:|\bar f(x)|<\e\}$ of $\supp(\mu)$ in $\beta X$. By the definition of support $\supp(\mu)$, every point $x\in K\setminus U$ has an open neighborhood $O_x\subseteq \beta X$ such that $\mu(g)=0$ for any function $g\in C(X)$ with $\supp(g)\subseteq O_x\cap X$. Observe that $U\cup\bigcup_{x\in K\setminus U}O_x$ is an open neighborhood of the compact set $K$ in $\beta X$. So there is a finite family $F\subseteq K\SM U$ such that $K\subseteq U\cup\bigcup_{x\in F} O_x$. Let $1_{\beta X}$ denote the constant function $\beta X\to\{1\}$. By the paracompactness of the compact space $\beta X$, there is a finite family $\{\lambda_0,\dots,\lambda_n\}$ of continuous functions $\lambda_i:\beta X \to[0,1]$ such that $\sum_{i=0}^n\lambda_i=1_{\beta X}$ and for every $i\in\{0,\dots,n\}$, the support $\supp(\lambda_i)$ is contained in some set $V\in\{\beta X\setminus K,U\}\cup\{O_x:x\in F\}$. We lose no generality assuming that
\[
\bigcup_{i=0}^j \supp(\lambda_i)\subseteq \beta X\setminus K, \quad \bigcup_{i=j+1}^s\supp(\lambda_i)\subseteq U,
\]
and for every $i\in\{s+1,\dots,n\}$ there exists $x_i\in F$ such that $\supp(\lambda_i)\subseteq O_{x_i}$.

Replacing the functions, $\lambda_0,\dots,\lambda_j$ by the single function $\sum_{i=0}^j\lambda_i$ and the functions, $\lambda_j+1,\dots,\lambda_s$ by the single function $\sum_{j+1}^s\lambda_i$, we can assume that $j=0$ and $s=1$. In this simplified case we have $\supp(\lambda_0)\subseteq \beta X\setminus K$, $\supp(\lambda_1)\subseteq U$ and $\supp(\lambda_i)\subseteq O_{x_i}$ for all $i\in\{2,\dots,n\}$.

For every $i\in n$, consider the function $f_i\in C(\beta X)$, defined by  $f_i(x):=\lambda_i(x)\cdot \bar f(x)$ for $x\in\beta X$. Then $\bar f=\sum_{i\in n} f_i$.

It follows from $\supp(f_0)\subseteq\supp(\lambda_0)\subseteq\beta X\setminus K$ and $[K;0]\subseteq \mu^{-1}(0)$ that  $f_0{\restriction}_K =0$ and $\mu(f_i{\restriction}_X)=0$.

Since $K\subseteq U$, $\bar f(U)\subseteq(-\e,\e)$ and $\supp(f_1)\subseteq \supp(\lambda_1)\subseteq U$, the function $f_1{\restriction}_X= ({\bar f}{\cdot}\lambda_1){\restriction}_X$ belongs to the set $[K;\e]\subseteq\mu^{-1}\big((-1,1)\big)$ and hence
\begin{equation}\label{eq:fle1}
\big|\mu\big(f_1{\restriction}_X\big)\big|\le 1.
\end{equation}

For every $i\in\{2,\dots,n\}$, we have $\supp(f_i)\subseteq\supp(\lambda_i)\subseteq O_{x_i}$ and hence $\mu(f_i{\restriction}_X)=0$ by the choice of $O_{x_i}$.
\smallskip

Now we see that
\[
2=\mu(f)=\mu(\bar f{\restriction}_X)=\mu(f_0{\restriction}_X)+\mu(f_1{\restriction}_X) +\mu\Big(\sum_{i=2}^n f_i{\restriction}_X\Big)=\mu(f_1{\restriction}_X),
\]
which contradicts (\ref{eq:fle1}).
\end{proof}

The following example shows that Lemma~\ref{l:3.2} is not true for linear  functionals which are continuous only in the topology $\Tau_b$.

\begin{example} {\em
Let $\w_1$ be the space of countable ordinals, endowed with the order topology. It is well-known that the space $\w_1$ is pseudocompact and every continous function $f:\w_1\to\IF$ is eventually constant and hence has $\lim\limits_{\alpha\to\w_1}f(\alpha)$. Then $\lim_{\w_1}:C(\w_1)\to\IF$, $\lim_{\w_1}:f\mapsto \lim\limits_{\alpha\to\w_1}f(\alpha)$, is a well-defined continuous linear functional on the Banach space $C_b(\w_1)$. It is easy to see that $\supp(\lim_{\w_1})=\emptyset$ and hence $[\supp(\lim_{\w_1});0]=[\emptyset;0]=C(\w_1)\not\subseteq\lim_{\w_1}^{-1}(0)$.\qed}
\end{example}

The next lemma follows from Lemma 3.4 from \cite{BG-JNP} because $\TTT_{\mathcal{S}}\subseteq \TTT_b$.

\begin{lemma}[\protect{\cite[Lemma~3.4]{BG-JNP}}]\label{l:3.3}
Let $X$ be a Tychonoff space, and let $\mathcal S$ be a directed family of functionally bounded sets in $X$. For any bounded subset $\mathcal M\subseteq (C_{\TTT_{\mathcal S}}(X))'_{w^\ast}$ the set $\supp(\M)=\bigcup_{\mu\in\mathcal M}\supp(\mu)$ is functionally bounded in $X$.
\end{lemma}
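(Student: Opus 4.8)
The plan is to deduce the lemma from the special case already available, namely \cite[Lemma~3.4]{BG-JNP}, which treats the topology $\TTT_b$ of uniform convergence on functionally bounded subsets of $X$. The mechanism is that passing from $\TTT_{\mathcal S}$ to $\TTT_b$ only enlarges the dual space and does not destroy weak$^\ast$ boundedness of $\M$, so the cited result applies directly.

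First I would check that $\TTT_{\mathcal S}\subseteq\TTT_b$. By hypothesis every $S\in\mathcal S$ is functionally bounded in $X$, hence each basic $\TTT_{\mathcal S}$-neighbourhood $[S;\e]$ of $0$ is also a basic $\TTT_b$-neighbourhood of $0$; thus every $\TTT_{\mathcal S}$-open set is $\TTT_b$-open. In particular every $\TTT_{\mathcal S}$-continuous linear functional on $C(X)$ is $\TTT_b$-continuous, so $\big(C_{\TTT_{\mathcal S}}(X)\big)'\subseteq\big(C_b(X)\big)'$, and therefore $\M\subseteq\big(C_b(X)\big)'$.

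Next I would verify that $\M$ is bounded in $\big(C_b(X)\big)'_{w^\ast}$. Since $C_{\TTT_{\mathcal S}}(X)$ and $C_b(X)$ have the same underlying vector space $C(X)$, the two weak$^\ast$ topologies $\sigma\big((C_{\TTT_{\mathcal S}}(X))',C(X)\big)$ and $\sigma\big((C_b(X))',C(X)\big)$ are generated by the same seminorms $\mu\mapsto|\mu(f)|$, $f\in C(X)$. As a subset of a locally convex space is bounded precisely when it is bounded with respect to every generating seminorm, the weak$^\ast$ boundedness of $\M$ in $\big(C_{\TTT_{\mathcal S}}(X)\big)'$ is exactly the condition $\sup_{\mu\in\M}|\mu(f)|<\infty$ for all $f\in C(X)$, which is in turn the weak$^\ast$ boundedness of $\M$ in $\big(C_b(X)\big)'$. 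Now $\M$ is a bounded subset of $\big(C_b(X)\big)'_{w^\ast}$, so \cite[Lemma~3.4]{BG-JNP} (the instance where $\mathcal S$ is the family of all functionally bounded subsets of $X$) gives that $\supp(\M)$ is functionally bounded in $X$; and $\supp(\M)=\bigcup_{\mu\in\M}\supp(\mu)$ denotes the same set whether $\M$ is regarded inside $\big(C_{\TTT_{\mathcal S}}(X)\big)'$ or inside $\big(C_b(X)\big)'$, which finishes the argument.

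The only point requiring a moment's attention — a routine verification rather than a genuine obstacle — is the observation in the third paragraph that ``weak$^\ast$ bounded'' must be read relative to the same family of test functions $C(X)$ in both duals, so the notion is unaffected by enlarging the dual from $\big(C_{\TTT_{\mathcal S}}(X)\big)'$ to $\big(C_b(X)\big)'$. A self-contained proof is of course possible in principle — assuming some $f\in C(X)$ unbounded on $\supp(\M)$, choosing $x_n\in\supp(\mu_n)$ with $|f(x_n)|\to\infty$, separating them by the pairwise disjoint open sets $f^{-1}\big(\{t\in\IF:|t-f(x_n)|<1\}\big)$, and assembling from the definition of support a single $h\in C(X)$ with $\sup_{\mu\in\M}|\mu(h)|=\infty$ — but patching the resulting functions together continuously is delicate, so the reduction above is the cleaner route.
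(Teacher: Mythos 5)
Your reduction is exactly the paper's own argument: the authors dispose of this lemma with the single remark that it ``follows from Lemma~3.4 of \cite{BG-JNP} because $\TTT_{\mathcal S}\subseteq\TTT_b$'', and your proposal just spells out why that inclusion suffices (the dual only enlarges, and weak$^\ast$ boundedness is tested against the same space $C(X)$ in both duals). The argument is correct and takes the same route as the paper.
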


\begin{lemma}\label{l:almost-trivial}
Let $X$ be a dense subspace a Tychonoff space $K$ and $Z$ be a finite-dimensional linear subspace of $C(K)$. Then there exists a finite set $F\subseteq X$ such that the restriction operator $R:C(K)\to\IF^F$, $R:f\mapsto (f(x))_{x\in F}$, is injective on the set $Z$.
\end{lemma}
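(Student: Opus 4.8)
The plan is to prove the statement by induction on $n:=\dim_{\IF}Z$. For $n=0$ the empty set $F=\emptyset$ works vacuously, so assume $n\ge 1$ and that the assertion holds for every subspace of $C(K)$ of dimension strictly less than $n$ (the ambient space $K$ and its dense subspace $X$ being fixed throughout).

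The first step is to extract one useful point from $X$ using only density. Choose any nonzero $g\in Z$. Since $g\in C(K)$ does not vanish identically, the set $\{x\in K:g(x)\ne 0\}$ is a nonempty open subset of $K$, hence it meets the dense subspace $X$; pick a point $x_0$ in this intersection. Then the evaluation functional $f\mapsto f(x_0)$ is a nonzero linear functional on $Z$, so its kernel
\[
Z_0:=\{f\in Z:f(x_0)=0\}
\]
is a linear subspace of $C(K)$ with $\dim_{\IF}Z_0=n-1$.

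Now I would apply the inductive hypothesis to $Z_0$: there is a finite set $F_0\subseteq X$ such that the restriction map $f\mapsto(f(x))_{x\in F_0}$ is injective on $Z_0$. I claim that $F:=\{x_0\}\cup F_0$ has the required property. Indeed, let $f\in Z$ with $f(x)=0$ for all $x\in F$. From $f(x_0)=0$ we get $f\in Z_0$, and then $f(x)=0$ for every $x\in F_0$ forces $f=0$ by the choice of $F_0$. Hence $R$ restricted to $Z$ is injective, and $|F|\le n$, which completes the induction.

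I do not expect a genuine obstacle here: the statement is elementary, and the only input from the topology of $K$ is that a nonzero continuous function on $K$ is nonzero at some point of the dense set $X$. (Equivalently, one can phrase the whole argument as: the $n$ evaluation functionals $f\mapsto f(x_i)$ chosen along the induction are linearly independent on $Z$, hence span $Z'$, because a functional in $Z'$ vanishing on all of them would correspond to an $f\in Z$ — after identifying $Z\cong Z''$ — vanishing on $X$, hence on $K$; but the inductive presentation above is the cleanest.) No compactness of $K$ is used.
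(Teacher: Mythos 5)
Your proof is correct, and it takes a genuinely different (and more elementary) route than the paper's. The paper fixes a basis $e_1,\dots,e_n$ of $Z$, observes that the restrictions $e_1{\restriction}_X,\dots,e_n{\restriction}_X$ are linearly independent in $\IF^X$ by density, and then runs a compactness argument: the sphere $S=\{(x_1,\dots,x_n)\in\IF^n:\sum_{i=1}^n|x_i|=1\}$ is compact, the map $(x_i)_{i=1}^n\mapsto\sum_{i=1}^n x_ie_i{\restriction}_X$ is continuous and avoids $0_X$, so some basic neighborhood of zero in the Tychonoff product $\IF^X$ is disjoint from its image, and the finite support $F$ of that basic neighborhood witnesses the lemma. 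Your induction on $\dim Z$ replaces all of this by choosing the points of $F$ one at a time; the only fact used at each step is that a nonzero continuous function on $K$ must be nonzero at some point of the dense set $X$, and the linear-algebra bookkeeping (the kernel of a nonzero functional on an $n$-dimensional space has dimension $n-1$) is routine. Both arguments are complete; yours avoids the product-topology and compactness machinery entirely and yields the sharper bound $|F|\le\dim Z$ for free, whereas the paper's $F$ could a priori be larger (though it can always be pruned afterwards to an $n$-point subset on which the restrictions remain independent). Your closing observation is also accurate: neither proof uses compactness of $K$, only the continuity of the elements of $Z$ and the density of $X$.
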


\begin{proof} The density of $X$ in $K$ implies that the restriction operator $R:C(K)\to\IF^X$ is injective. Let $e_1,\dots,e_n$ be a basis of the finite-dimensional linear space $Z$. Then the vectors $e_1,\dots,e_n$ are linearly independent in $C(K)$ and, by the injectivity of the operator $R$, the vectors $R(e_1),\dots,R(e_n)$ are linearly independent in $\IF^X$.
Consider the compact set $S=\{(x_1,\dots,x_n)\in \IF^n:\sum_{i=1}^n|x_i|=1\}$ in $\IF^n$ and the continuous map
\[
\Lambda_X:S\times (\IF^X)^n\to\IF^X,\quad \Lambda_X:((x_i)_{i=1}^n,(f_i)_{i=1}^n)\mapsto\sum_{i=1}^nx_if_i.
\]
The linear independence of the vectors $R(e_1),\dots,R(e_n)$ implies that  $0_X\notin \Lambda_X\big(S\times\big\{\big(R(e_i)\big)_{i=1}^n\big\}\big)$, where $0_X:X\to\{0\}\subseteq\IF$ is the zero of the linear space $\IF^X$. By the compactness of $S$, there exists a neighborhood $U$ of zero in $\IF^X$ such that $U\cap \Lambda\big(S\times\prod_{i=1}^n(R(e_i)+U)\big)=\emptyset$.
By the definition of the Tychonoff product topology on $\IF^X$, there exists a finite set $F\subseteq X$ such that $\pr_F^{-1}(0_F)\subseteq U$, where $\pr_F:\IF^X\to\IF^F$, $\pr_F:f\mapsto f{\restriction}_F$, is the projection map and $0_F$ is the zero of the linear space $\IF^F$. The choice of $F$ and $U$ ensure that the vectors $e_1{\restriction}_F,\dots,e_n{\restriction}_F$ are linearly independent in $\IF^F$ and hence the restriction operator $R:C(K)\to\IF^F$ is injective on the set $Z$.
\end{proof}

A subset $X$ of a topological space $M$ is called
\begin{itemize}
\item {\em $k$-dense} in $M$ if any compact set $K\subseteq M$ is contained in a compact set $S\subseteq M$ such that $S\cap X$ is dense in $S$;
\item {\em sequentially closed} in $M$ if for any sequence $\{x_n\}_{n\in\w}\subseteq X$ that converges in $M$ we have $\lim\limits_{n\to\infty}x_n\in X$.
\end{itemize}

The following theorem is the main result of this section.

\begin{theorem} \label{t:main3}
For a $\mu$-space $M$ and a dense subset $X$ in $M$, the following assertions are equivalent:
\begin{enumerate}
\item[{\rm(i)}] for every locally convex  topology $\TTT$ on $C(M)$ satisfying $\TTT_{p{\restriction}X}\subseteq \TTT\subseteq \TTT_{k{\restriction}X}$, the space $C_\TTT(M)$ is strongly Gelfand--Phillips;
\item[{\rm(ii)}] there is  a locally convex  topology $\TTT$ on $C(M)$ such that $\TTT_{p{\restriction}X}\subseteq \TTT\subseteq \TTT_{k{\restriction}X}$ and the space $C_\TTT(M)$ is strongly Gelfand--Phillips;
\item[{\rm(iii)}] $X$ contains a compact countable subspace $K$ of finite scattered height such that for every compact set $F\subseteq M$ the complement $F\cap X\setminus K$ is finite.
\end{enumerate}
If $X$ is $k$-dense and sequentially closed in $M$, then the conditions {\rm(i)--(iii)} are equivalent to
\begin{enumerate}
\item[{\rm(iv)}] for every locally convex topology $\TTT$ on $C(M)$ satisfying $\TTT_{p{\restriction}X}\subseteq \TTT\subseteq \TTT_{k}$, the space $C_\TTT(M)$ is strongly Gelfand--Phillips;
\item[{\rm(v)}] there is  a locally convex  topology $\TTT$ on $C(M)$ such that $\TTT_{p{\restriction}X}\subseteq \TTT\subseteq \TTT_k$ and the space $C_\TTT(M)$ is strongly Gelfand--Phillips.
\item[{\rm(vi)}] $X=M$ and $X$ contains a compact countable subspace $K$ of finite scattered height such that for every compact set $F\subseteq X$ the complement $F\setminus K$ is finite.
\end{enumerate}
If $X$ is $k$-dense in $M$, then the condition {\rm (v)} implies the condition
\begin{enumerate}
\item[{\rm(vii)}] $M$ contains a compact countable subspace $K$ of finite scattered height such that $M=X\cup K$ and for every compact set $ F\subseteq M$ the complement $F\setminus K$ is finite.
\end{enumerate}
If {\rm (iii)} holds, then a sequence $(\mu_n)_{n\in\w} \subseteq C_\TTT(X)'$ witnessing the  strong Gelfand--Phillips property of $C_\TTT(X)$ in {\rm(i)} and {\rm(iv)} can be chosen such that all $\mu_n$ have finite support $\supp(\mu_n)$ contained in $K$.
\end{theorem}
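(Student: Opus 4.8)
The plan is to prove the chain (iii)$\Rightarrow$(i)$\Rightarrow$(ii)$\Rightarrow$(iii) first, then handle the additional implications under the $k$-dense/sequentially closed hypotheses, and finally verify the "moreover" claim about supports (which will in fact come for free from the construction used in (iii)$\Rightarrow$(i)). The core of the argument is the implication (iii)$\Rightarrow$(i), and this is where I would do the real work. Assume $X$ contains a compact countable $K\subseteq X$ of finite scattered height, say $K^{(m)}=\emptyset$, such that $F\cap X\setminus K$ is finite for every compact $F\subseteq M$. The goal is to produce a single weak$^\ast$ null sequence $(\mu_n)_{n\in\w}$ in $C(M)'$ — with each $\mu_n$ supported in $K$ — that witnesses the strong $(GP)$ property for every $\TTT$ between $\TTT_{p{\restriction}X}$ and $\TTT_{k{\restriction}X}$. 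Here I would exploit the analogous construction from the $C(K)$-case (Theorem \ref{t:Banach-C(K)-strong-GP}): since $K$ is countable compact of finite scattered height, $C(K)$ is isomorphic to $c_0$, and there is a weak$^\ast$ null sequence in $C(K)'$ (built by induction on the scattered levels $K^{(0)}\supseteq K^{(1)}\supseteq\cdots$, using point-mass-type functionals and finite differences at each level) that "sees" every bounded non-precompact subset of $C(K)$. Pulling these functionals back along the restriction map $C(M)\to C(K)$ gives functionals $\mu_n\in C(M)'$ with $\supp(\mu_n)\subseteq K$, hence continuous in $\TTT_{p{\restriction}X}$ and a fortiori in every $\TTT\supseteq\TTT_{p{\restriction}X}$.

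The key verification is then: if $B\subseteq C_\TTT(M)$ is barrel-bounded but not barrel-precompact, then $\|\mu_n\|_B\not\to 0$. By Lemma \ref{l:JNP-norming}(ii) applied on the relevant compact pieces (or by a direct computation of barrels in $C_\TTT(M)$ when $\TTT_{p{\restriction}X}\subseteq\TTT\subseteq\TTT_{k{\restriction}X}$), barrel-boundedness of $B$ forces $B$ to be bounded in $C_{k{\restriction}X}(M)=C_k(M)$ (using that $M$ is a $\mu$-space and Lemma \ref{l:Ck=Cb}, \ref{l:Ck=Cb}), i.e. $B$ is uniformly bounded on compacta; and barrel-non-precompactness, via Proposition \ref{p:lcs-precompact}, yields an infinite barrel-separated $D\subseteq B$. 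Now I would use the condition on $K$: any compact $F\subseteq M$ meets $X$ outside $K$ in a finite set, so "essentially all" of the separation of $D$ must already be detected on $K$ — more precisely, the restriction map $C(M)\to C(K)$ sends $D$ to a bounded, non-precompact subset of $C(K)\cong c_0$ (if it sent $D$ into a precompact set, then combining with the finiteness of $F\cap X\setminus K$ one shows $D$ would be barrel-precompact in $C_\TTT(M)$, a contradiction). Then the defining property of the sequence in $C(K)'$ gives $\|\mu_n\|_{R(D)}\not\to 0$, hence $\|\mu_n\|_D\not\to 0$, hence $\|\mu_n\|_B\not\to 0$. This is the step I expect to be the main obstacle: carefully transferring "non-barrel-precompact in $C_\TTT(M)$" to "non-precompact in $C(K)$" using only the hypothesis that compact sets meet $X\setminus K$ finitely, and doing so uniformly over all admissible $\TTT$.

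For (i)$\Rightarrow$(ii) there is nothing to prove. For (ii)$\Rightarrow$(iii) I would argue contrapositively: if no such $K$ exists, build a barrel-bounded, barrel-separated (hence non-barrel-precompact) subset $D\subseteq C_\TTT(M)$ on which every weak$^\ast$ null sequence in $C(M)'$ converges uniformly to $0$. Here I would split into cases according to how condition (iii) fails. If $X$ has no compact countable subspace of finite scattered height containing "most" functionally bounded behavior, one can use either (a) a functionally bounded set that is not contained in any such $K$, producing via Tychonoff separation a discrete family of bump functions witnessing non-$(GP)$, mimicking the $C(K)$-obstruction, or (b) if the scattered-height condition fails, reuse the fact from Theorem \ref{t:Banach-C(K)-strong-GP} that $C(K)$ for $K$ of infinite scattered height (or uncountable, or non-compact) is not strongly Gelfand--Phillips, and pull such an obstruction back to a complemented-like copy inside $C_\TTT(M)$ using Lemma \ref{l:almost-trivial} and the restriction maps. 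The point that makes this case analysis tractable is that every weak$^\ast$ null sequence $(\eta_n)$ in $C_\TTT(M)'$ has, by Lemma \ref{l:3.3}, a common functionally bounded support $S=\bigcup_n\supp(\eta_n)$, and by Lemma \ref{l:3.2} each $\eta_n$ is already determined by the values of functions on $\overline S\cap X$; so if the obstructing $D$ is chosen to "live at infinity" relative to every functionally bounded set (which is exactly what the failure of (iii) permits), then $\|\eta_n\|_D\to0$ automatically.

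Finally, under the extra hypotheses: if $X$ is $k$-dense in $M$, the implication (v)$\Rightarrow$(vii) follows by observing that $k$-density lets one replace the family of compact subsets of $X$ by compact subsets of $M$ whose trace on $X$ is dense, so that $\TTT_{k{\restriction}X}$ and $\TTT_k$ interact controllably, and then running the (ii)$\Rightarrow$(iii) argument with $M$ in place of $X$ to extract $K\subseteq M$ with $M=X\cup K$. If moreover $X$ is sequentially closed in $M$, then the countable compact $K$ of finite scattered height — being a limit of sequences from $X$ — must lie in $X$, forcing $M=X$ and upgrading (vii) to (vi); and (vi)$\Leftrightarrow$(iii)$\Leftrightarrow$(iv) then follow since $\TTT_k=\TTT_{k{\restriction}X}$ when $X=M$. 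The "moreover" statement about $\supp(\mu_n)\subseteq K$ is already built into the construction in (iii)$\Rightarrow$(i): the $\mu_n$ were defined as pullbacks of functionals on $C(K)$, so each has finite support inside $K$ by construction, and the same sequence works for every admissible $\TTT$ in (i) and (iv). I expect the verification that this single sequence simultaneously handles the whole interval of topologies — rather than a $\TTT$-dependent one — to require the uniform estimate from the (iii)$\Rightarrow$(i) step above, which is why I flag that as the crux.
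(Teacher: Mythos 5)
Your plan for (iii)$\Rightarrow$(i) is essentially the paper's: pull back a witnessing sequence from $C_p(K)$ (strongly Gelfand--Phillips because $C(K)\cong c_0$ and $C_p(K)'$ is dense in $\ell_1(K)$) along the restriction operator, and use the finiteness of $F\cap X\setminus K$ to transfer non-barrel-precompactness down to $C(K)$. You correctly flag the transfer step as the crux, but you leave out the one point where it actually becomes delicate: a barrel $D$ in $C_\TTT(M)$ only gives you $[S;\e]\subseteq D$ for some compact $S\subseteq M$ (via barrelledness of $C_k(M)$), whereas the finiteness hypothesis speaks about $S\cap X\setminus K$. One needs $[S\cap X;\e]\subseteq D$, and when $X$ is not $k$-dense this requires a separate argument (the paper uses $\TTT$-closedness of $D$ plus a Tietze--Urysohn gluing to approximate any $f\in[S\cap X;\e]$ by elements of $[S;\e]$ in the topology $\TTT_{k{\restriction}X}$). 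Without this, the reduction to the finite-dimensional space $C(S\cap X\setminus K)$ does not go through.

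The more serious gap is in (ii)$\Rightarrow$(iii). Your contrapositive case analysis never explains how to rule out the possibility that the witnessing sequence $(\mu_n)$ has functionally bounded support whose compact closure $\bar S$ is uncountable, or countable of infinite scattered height: the negation of (iii) is quantified over all candidate $K$, and each weak$^\ast$ null sequence brings its own $\bar S$, so you must prove properties \emph{of that particular} $\bar S$ rather than construct one universal bad set. The paper does this directly: writing $\mu_n=\lambda_n\circ R$ for the restriction $R:C(M)\to C(K)$, it shows the operator $\Lambda:C(K)\to c_0$, $f\mapsto(\lambda_n(f))_n$, has finite-dimensional kernel and is an isomorphic embedding (by lifting bounded non-precompact sets of $C(K)$ back to barrel-bounded non-barrel-precompact sets of $C_\TTT(M)$ via Tietze extension), and only then invokes Theorem \ref{t:Banach-C(K)-strong-GP} to get countability and finite scattered height. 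Your cited tool for ``pulling back obstructions,'' Lemma \ref{l:almost-trivial}, concerns separating points of a finite-dimensional subspace by finitely many evaluations and cannot do this job. Finally, you never address why the compact set $K$ can be taken \emph{inside} $X$ (as (iii) demands, in contrast to (vii)): the supports lie in $X$ by Lemma \ref{l:3.2}, but their closure in $M$ need not; the paper's Claim \ref{cl:3.8} derives $K\subseteq X$ from the density of $C_\TTT(M)'$-induced functionals, which live in $\ell_1(K\cap X)$, inside $\ell_1(K)$ --- a step absent from your outline.
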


\begin{proof} The implications (i)$\Rightarrow$(ii), (iv)$\Rightarrow$(v), and (vi)$\Ra$(iii) are trivial.
\smallskip

(ii)$\Rightarrow$(iii) and (v)$\Rightarrow$(vi,vii). Assume that for some locally convex topology $\TTT$ on $C(M)$ with $\TTT_{p{\restriction}X}\subseteq\TTT\subseteq \TTT_k$, the locally convex space $C_\TTT(X)$ is strongly Gelfand--Phillips. We also assume that either $X$ is $k$-dense in $M$ or $\TTT\subseteq \TTT_{k{\restriction}X}$.

By the strong Gelfand--Phillips property of $C_\TTT(M)$, there exists a null sequence $(\mu_n)_{n\in\w}$ in $(C_\TTT(M))'_{w^\ast}$ such that $\|\mu_n\|_B\not\to 0$ for any barrel-bounded set $B\subseteq C_\TTT(M)$, which is not barrel-precompact. The continuity of the identity operator $\CC(M)\to C_{\TTT}(M)$ implies that the sequence $(\mu_n)_{n\in\w}$ is  null also in $\CC(M)'_{w^\ast}$.

 By Lemma~\ref{l:3.3}, the set $S:=\bigcup_{n\in\w}\supp(\mu_n)$ is functionally bounded and hence has compact closure $\bar S$ in the $\mu$-space $M$.

\begin{claim}\label{cl:finite}
For every compact set $F\subseteq M$ the complement $F\cap X\setminus \bar S$ is finite.
\end{claim}

\begin{proof} Replacing $F$ by the closure of $F\cap X$ in $M$, we can assume that $X\cap F$ is dense in $F$. Consider the bounded subset $B=\{f\in C(M):f{\restriction}_{\bar S}=0$ and $\|f\|_M\le 1\}$ of the locally convex space $C_k(M)$. Since $M$ is a $\mu$-space, the locally convex space $\CC(M)$ is barrelled by the Nachbin--Sirota Theorem \cite[11.7.5]{Jar}. Then $B$ is barrel-bounded in $\CC(M)$ and, by the continuity of the identity operator $\CC(M)\to C_\TTT(M)$, the set $B$ is barrel-bounded in $C_{\TTT}(M)$.  Applying Lemma~\ref{l:3.2} and taking into account that every function $f\in B$ is zero on the compact set $\bar S\supseteq \bigcup_{n\in\w}\supp(\mu_n)$, we conclude that $\|\mu_n\|_{B}=0$ for every $n\in\w$. Now the choice of the sequence $(\mu_n)_{n\in\w}$ ensures that the set $B$ is barrel-precompact in $C_\TTT(M)$. Observe that the set $[F;1]=[F\cap X;1]$ is a barrel in $C_{p{\restriction}X}(M)$ and hence a barrel in $C_\TTT(M)$. Since the set $B$ is barrel-precompact, for every $\e>0$ there exists a finite set $A_\e\subseteq B$ such that $B\subseteq A_\e +\e\cdot [F;1]$. Then for the restriction operator $T:C(M)\to C(F)$,
$T:f\mapsto f{\restriction}_F$, we have $T(B)\subseteq T(A_\e)+\e{\cdot} T([F;1])$. The Tietze--Urysohn Extension Theorem implies that $T([F;1])$ coincides with the closed unit ball  of the Banach space $C(F)$ and $T(B)$ coincides with the closed unit ball of the Banach subspace $L=\{f\in C(F):f{\restriction}_{F\cap \bar S}=0\}$ of $C(F)$. Now we see that the closed unit ball $T(B)$ of the Banach space $L$ is precompact in the Banach space $C(F)$ and hence the Banach space $L$ is finite-dimensional, which implies that the set $F\setminus \bar S$ is finite and so is the set $F\cap X\setminus\bar S\subseteq F\setminus\bar S$.
\end{proof}

\begin{claim}\label{cl:finite2}
If $X$ is $k$-dense in $M$, then $M=X\cup\bar S$.
\end{claim}

\begin{proof} By the $k$-density of $X$ in $M$,  every point $y\in M$ is contained in a compact set $F\subseteq M$ such that the set $F\cap X$ is dense in $K$. By Claim~\ref{cl:finite}, the set $F\cap X\setminus\bar S$ is finite and hence the set $(F\cap X)\cup\bar S$ is compact. Since the latter set is dense in $F\cup\bar S$, we obtain that $y\in F\cup\bar S=(F\cap X)\cup\bar S\subseteq X\cup\bar S$.
\end{proof}

If $\TTT\subseteq\TTT_{k{\restriction}X}$, then Lemma~\ref{l:3.2} implies that $S=\bigcup_{n\in\w}\supp(\mu_n)\subseteq X$ and hence $\bar S\cap X\supseteq S$ is dense in $\bar S$. In this case we put $K=\bar S$. If  $\TTT\not\subseteq\TTT_{k{\restriction}X}$, then $X$ is $k$-dense in $M$. In this case we can find a compact set $K\subseteq M$ such that $\bar S\subseteq K$ and $K\cap X$ is dense in $K$. In both cases we have a compact set $K\subseteq M$ such that $S\subseteq K$ and $K\cap X$ is dense in $K$. Since $\bar S\subseteq K$, Claim~\ref{cl:finite} ensures that $F\cap X\setminus K$ is finite for any compact set $F\subseteq M$. Moreover, if $X$ is $k$-dense in $M$, then $M=X\cup K$ by Claim~\ref{cl:finite2}.

It remains to prove that $K$ is countable  and has finite scattered height. We shall derive this from Theorem~\ref{t:Banach-C(K)-strong-GP} by showing that the Banach space $C(K)=\CC(K)$ is isomorphic to a subspace of the Banach space $c_0$.

Let $R:\CC(M)\to C(K)$, $R:f\mapsto f{\restriction}_K$, be the restriction operator. The Tietze--Urysohn Extension Theorem implies that this operator is surjective. For every $n\in\w$, the inclusion $\supp(\mu_n)\subseteq K$ and Lemma~\ref{l:3.2} imply the existence of a linear continuous functional $\lambda_n\in C(K)'$ such that $\mu_n=\lambda_n\circ R$. The surjectivity of the operator $R$ ensures that the sequence $(\lambda_n)_{n\in\w}$ is null in $C(K)'_{w^\ast}$. Consider the operator
\[
\Lambda:C(K)\to c_0, \quad \Lambda(f):=\big(\lambda_n(f)\big)_{n\in\w},
\]
which is continuous by the Uniform Boundedness Principle.

\begin{claim}
The kernel of the operator $\Lambda:C(K)\to c_0$ is finite-dimensional.
\end{claim}

\begin{proof} Let $Z=\{f\in C(K):\Lambda(f)=0\}$ be the kernel of $\Lambda$.
Since the locally convex space $\CC(M)$ is barrelled, the bounded subset $P=\{f\in \CC(M):\|f\|_X\le 1$ and $f{\restriction}_K\in Z\}$ of $\CC(M)$ is barrel-bounded in $\CC(M)$ and hence barrel-bounded in $C_\TTT(M)$ (as $\TTT\subseteq\TTT_k$). Observe that for every $f\in P$ we have $\mu_n(f)=\lambda_n(f{\restriction}_K)=0$ for all $n\in\w$.
Consequently, $\|\mu_n\|_P=0$ for all $n\in\w$. Now the choice of the sequence $(\mu_n)_{n\in\w}$ ensures that the barrel-bounded set $P$ is barrel-precompact in $C_\TTT(M)$. Observe that the set $[K;1]=[K\cap X;1]$ is a barrell in $C_{p{\restriction}X}(M)$ and hence in $C_\TTT(M)$. Since $P$ is barrel-precompact in $C_\TTT(M)$, for every $\e>0$ there exists a finite set $A_\e\subseteq P$ such that $P\subseteq A_\e+\e{\cdot} [K;1]$ and hence $R(P)\subseteq P(A_\e)+\e{\cdot} R([K;1])$. By the Tietze-Urysohn Extension Theorem, the set $R([K;1])$ coincides with the closed unit ball of the Banach space $C(K)$ and the set $R(P)$ coincides with the closed unit ball of the Banach subspace $Z$ of $C(K)$. Now we see that the closed unit ball $R(P)$ of the Banach space $Z$ is precompact in the Banach space $C(K)$, which implies that the space $Z$ is finite-dimensional.
\end{proof}

Applying Lemma~\ref{l:almost-trivial},  we can append to the sequence $(\mu_n)_{n\in\w}$ finitely many Dirac functionals supported at points of the set $K\cap X$, which separate points of the finite-dimensional space $Z$,  and assume that the operator $\Lambda:C(K)\to c_0$ is injective.

\begin{claim} \label{claim:sGP-1}
$\Lambda$ is an isomorphic embedding.
\end{claim}

\begin{proof}
In the opposite case, we could find a sequence of functions $F=\{f_n\}_{n\in\w}\subseteq C(K)$ such that $\|f_n\|_{K}=1$ for all $n\in\w$ and $\Lambda(f_n)\to 0$ in the Banach space $c_0$. Let $(e'_n)_{n\in\w}$ be the sequence of coordinate functionals in $c_0$. Since the set $F_0=\{0\}\cup\Lambda(F)$ is compact in $c_0$, Proposition \ref{p:precompact-c0} implies $\|e_n'\|_{F_0}\to 0$.

By the Tietze--Urysohn Theorem, for every $n\in\w$ there exists a function $\hat f_n\in C(M)$ such that $\hat f_n{\restriction}_{K}=f_n$ and $\|\hat f_n\|_{M}=1$. It follows that the set $\hat F=\{\hat f_n:n\in\w\}$ is bounded in $\CC(M)$. Since $M$ is a $\mu$-space, the locally convex space $\CC(M)$ is barreled. Then the bounded set $\hat F$ is barrel-bounded in $\CC(M)$ and also in $C_\TTT(M)$.
For every $n,m\in\w$, we have $\mu_n(\hat f_m)=\lambda_n(f_m)$ and hence
\[
\|\mu_n\|_{\hat F}=\sup_{m\in\w}|\mu_n(\hat f_m)|=\sup_{m\in\w}|\lambda_n(f_m)|=\|e'_n\|_{F_0}\to 0.
\]
The choice of the sequence $(\mu_n)_{n\in\w}$ implies that the barrel-bounded set $\hat F$ is barrel-precompact in $C_\Tau(M)$. Observe that the set $[K;1]=[K\cap X;1]$ is a barrel in $C_{p{\restriction}X}(M)$ and hence is $C_\TTT(M)$. For every $\e>0$, by the barrel-precompactness of $\hat F$, there exists a finite set $A_\e\subseteq \hat F$ such that $\hat F\subseteq A_\e+\e{\cdot} [K;1]$ and hence $F=R(\hat F)\subseteq R(A_\e)+\e{\cdot} R([K;1])$. The Tietze--Urysohn Theorem ensures that $R([K;1])$ coincides with the closed unit ball of the Banach space $C(K)$. Now we see that the set $F=R(\hat F)$ is precompact in the Banach space $C(K)$ and hence the sequence $(f_n)_{n\in\w}$ contains a subsequence $(f_{n_k})_{k\in\w}$ that converges to some function $f_\infty\in C(K)$ with $\|f_\infty\|_K=1$. On the other hand, $\Lambda(f_\infty)=\lim_{n\to\infty}\Lambda(f_{n_k})=0$, which contradicts the injectivity of $\Lambda$. This contradiction shows that the Banach space $C(K)$ is isomorphic to a subspace of $c_0$.
\end{proof}

By Theorem \ref{t:Banach-C(K)-strong-GP} and Claim \ref{claim:sGP-1}, the compact space $K$ is countable and has finite scattered height.
This completes the proof of the implication (v)$\Ra$(vii).
\smallskip

The following claim completes the proof of the implication (ii)$\Ra$(iii).  

\begin{claim}\label{cl:3.8}
If $\TTT\subseteq\TTT_{k{\restriction}X}$, then the function space $E=C_{k{\restriction}X\cap K}(K)$ is strongly Gelfand--Phillips and $K\subseteq X$.
\end{claim}

\begin{proof}
The continuity of the functionals $\mu_n$ in the topology $\TTT\subseteq\TTT_{k{\restriction}X}$ implies the continuity of the functionals $\lambda_n$ in the topology $\TTT_{k{\restriction}X\cap K}$ on the function space $C(K)$. Then $(\lambda_n)_{n\in\w}$ is a null sequence in the dual $E'_{w^\ast}$. We claim that $\|\lambda_n\|_B\not\to0$ for any barrel-bounded set $B\subseteq E$, which is not barrel-precompact. The continuity of the identity operator $C_{k{\restriction}X\cap K}(K)\to C_{p{\restriction}X\cap K}(K)$ implies that $B$ is barrel-bounded in the space $C_{p{\restriction}X\cap K}(K)$. The density of $K\cap X$ in $K$ implies that the barrel-bounded set $B$ is bounded in the Banach space $\CC(K)$. Since $B$ is not barrel-precompact in $E$, there exists a barrel $D$ in $E$ such that $B\not\subseteq A+D$ for every finite set $A\subseteq B$. Then we can choose a sequence of functions $\{f_n\}_{n\in\w}\subseteq B$ such that $f_n\notin f_k+D$ for every numbers $k<n$. By the Tietze--Urysohn Theorem, for every $n\in\w$ there exists a function $\hat f_n\in C(M)$ such that $\hat f_n{\restriction}_K=f_n$ and $\|\hat f_n\|_M=\|f_n\|_K$. Since $\sup_{n\in\w}\|\hat f_n\|_M=\sup_{n\in\w}\|f_n\|_K<\infty$, the set $\{\hat f_n\}_{n\in\w}$ is barrel-bounded in $\CC(M)$ and hence is barrel-bounded in $C_\TTT(M)$.

Since $D$ is a barrel in the Banach space $\CC(K)$, there exists $\e>0$ such that $[K;\e]\subseteq D$. It follows from $f_n\notin f_k+[K;\e]_{C(K)}$ that $\hat f_n\notin \hat f_k+[K;\e]_{C(M)}$ for every $k<n$. Since the set $K\cap X$ is dense in $K$, the set $[K;1]_{C(M)}=[K\cap X;1]_{C(M)}$ is a barrel in $C_{p{\restriction}X}(M)$ and hence a barrel in $C_\TTT(M)$. Now we see that the set $\hat B=\{\hat f_n:n\in\w\}$ is barrel-bounded and not barrel-precompact in $C_\TTT(M)$. The choice of the sequence $(\mu_n)_{n\in\w}$ ensures that $\|\mu_n\|_{\hat B}\not\to 0$. By Lemma~\ref{l:3.2}, the inclusion  $\supp(\mu_n)\subseteq K$ and  equality $\hat f_m{\restriction}_K=f_m$ imply that $\lambda_n(f_m)=\mu_n(\hat f_m)$ for any $n,m\in\w$. Then $\|\lambda_n\|_B\ge \|\mu_n\|_{\hat B}$ and hence $\|\lambda_n\|\not\to 0$, witnessing that the space $E$ is strongly Gelfand--Phillips.

Since $K\cap X$ is dense in $K$,  the span $L$ of $K\cap X$ in $E'$ is norming, and hence, by Lemma \ref{l:JNP-norming}, the space  $E$ is $\beta$-Banach with $E_\beta=\CC(K)$. By Theorem~\ref{t:strong-GP}, the dual space $E'$ is dense in the dual Banach space $\CC(K)'$, which is isometric to the Banach space $\ell_1(K)$  because $K$ is countable. On the other hand, by Lemma~\ref{l:3.2}, each functional $\mu\in E'$ has compact support in the set $X\cap K$, which implies that $E'\subseteq \ell_1(K\cap X)$. The density of $E'$ in $\CC(K)'$ ensures that $K\cap X=K$ and hence $K\subseteq X$.
\end{proof}

If the space $X$ is sequentially closed in $M$, then the dense subset $K\cap X$ of the metrizable compact space $K$ is closed in $K$ and hence $K=\overline{K\cap X}=K\cap X\subseteq X$. This completes the proof of the implication (v)$\Ra$(vi).
\smallskip

(iii)$\Rightarrow$(i,iv) Assume that $X$ contains a compact countable subspace $K$ of finite scattered height such that for every compact set $S\subseteq X$ the complement $S\cap X\setminus K$ is finite. Let $\TTT$ be any locally convex topology on $C(M)$ such that $\tau_{p{\restriction}X}\subseteq \TTT\subseteq\TTT_k$. If $X$ is not $\kappa$-dense in $M$, then we shall also assume that $\TTT\subseteq\TTT_{k{\restriction}X}$.

By Theorem \ref{t:Banach-C(K)-strong-GP}, the Banach space $C(K)$ is isomorphic to $c_0$. Observe that $C_p(K)'$ consists of finitely supported functionals and is dense in the dual Banach space $C(K)'$, which is isometric to $\ell_1(K)$. By Lemma \ref{l:JNP-norming} and Theorem~\ref{t:strong-GP}, the function space $C_p(K)$ is strongly Gelfand--Phillips. Consequently, there exists a null sequence $(\mu_n)_{n\in\w}\in C_p(K)'_{w^\ast}$ such that $\|\mu_n\|_B\not\to 0$ for any barrel-bounded subset $B\subseteq C_p(K)$ which is not barrel-precompact. The measures $\mu_n$ have finite support in $K\subseteq X\subseteq M$, and hence can be identified with finitely supported linear functionals on $C_{p{\restriction}X}(M)$. The continuity of the restriction map $C_{p{\restriction}X}(M)\to C_p(K)$ implies that $(\mu_n)_{n\in\w}$ is a null sequence in $C_{p{\restriction}X}(M)'_{w^\ast}$.

Since the identity operator $C_\TTT(M)\to C_{p{\restriction}X}(M)$ is continuous, the sequence $(\mu_n)_{n\in\w}$ remains null in $C_\TTT(M)'_{w^\ast}$. We claim that $\|\mu_n\|_{B}\not\to 0$ for any barrel-bounded subset $B\subseteq C_\TTT(M)$, which is not barrel-precompact. To derive a contradiction, assume that $\|\mu_n\|_{B}\to 0$. Since $B$ is not barrel-precompact, there exists a barrel $D\subseteq C_\TTT(M)$ such that $B\not\subseteq A+D$ for any finite set $A\subseteq B$. In this case we can inductively construct a sequence of functions $\{f_n\}_{n\in\w}\subseteq B$ such that $f_n\notin f_k+D$ for any numbers $k<n$.

By the continuity of the identity operator $\CC(M)\to C_\TTT(M)$, the set $D$ is also a barrel in $\CC(M)$. Since $M$ is a $\mu$-space, the space $\CC(M)$ is barelled according to the Nachbin--Shirota Theorem \cite[11.7.5]{Jar}. Then $D$ is a neighborhood of zero in $\CC(M)$ and hence $[S;\e]\subseteq D$ for some compact set $S\subseteq M$ and some $\e>0$. Replacing $S$ by $S\cup K$, we can assume that $K\subseteq S$.

\begin{claim}
$[S\cap X;\e]\subseteq D$.
\end{claim}

\begin{proof}
If $X$ is $k$-dense in $M$, then $S$ is contained in a compact set $\hat S\subseteq M$ such that $\hat S\cap X$ is dense in $\hat S$. On the other hand, the set $\hat S\cap X\setminus K$ is finite and hence the set $\hat S\cap X$ is compact and $S\subseteq \hat S=\overline{\hat S\cap X}=\hat S\cap X\subseteq X$. Then $[S\cap X;\e]=[S;\e]\subseteq D$.

So, assume that $X$ is not $k$-dense in $M$. In this case $\TTT\subseteq\TTT_{k{\restriction}X}$. Suppose for a contradiction that $[S\cap X;\e]\setminus D$ contains some function $f$.  Since the barrel $D$ is closed in the topology $\TTT\subseteq\TTT_{k{\restriction}X}$, there exist $\delta>0$ and a compact set $C\subseteq X$ such that $(f+[C;\delta])\cap D=\emptyset$. If follows from $f\in [S\cap X;\e]$ that $f(C\cap S)\subseteq f(X\cap S)\subseteq[-\e,\e]$. By the Tietze--Urysohn Theorem, there exists a continuous function $g:M\to [-\e,\e]$ such that $g(x)=f(x)$ for every $x\in C\cap S$. Define the function $\varphi:C\cup S\to\ff$ by the formula
\[
\varphi(x)=\begin{cases}
f(x)&\mbox{if $x\in C$};\\
g(x)&\mbox{if $x\in S$};
\end{cases}
\]
and observe that it is well-defined and continuous. By the Tietze--Urysohn Theorem, the function $\varphi$ can be extended to a bounded continuous function $\psi:M\to\ff$. Then $\psi\in (f+[C;\delta])\cap [S;\e]\subseteq (f+[C;\delta])\cap D$, which contradicts the choice of $C$ and $\delta$. This contradiction shows that $[S\cap X;\e]\subseteq D$.
\end{proof}

Since $[S\cap X;\e]\subseteq D$, we can replace the set $S$ by $\overline{S\cap X}$ and assume that $S\cap X$ is dense in $S$. Since the identity operator $C_\TTT(M)\to C_{p{\restriction}X}(M)$ is continuous, the set $B$ is barrel-bounded in $C_{p{\restriction}X}(M)$. Consider the restriction operator $R:C_{p{\restriction}X}(M)\to C_p(K)$ and observe that $R(B)$ is barrel-bounded in $C_p(K)$. By the barrelledness of the Banach space $C(K)$, the barrel-bounded subset $R(B)$ of $C_p(K)$ is bounded in $C(K)$. Since $\bigcup_{n\in\w}\supp(\mu_n)\subseteq K$, we obtain $\|\mu_n\|_{R(B)}= \|\mu_n\|_{B}\to 0$. The choice of the sequence $\{\mu_n\}_{n\in\w}\subseteq C(K)'_{w^\ast}$ ensures that the set $R(B)$ is barrel-precompact and hence $R(B)\subseteq R(P)+[K;\tfrac12\e]\subseteq C(K)$ for some finite set $P\subseteq B$. It follows that $B\subseteq P+[K;\tfrac{1}{2}\e]\subseteq C(M)$, and hence for every $n\in\w$ there exists a function $g_n\in P$ such that $f_n-g_n\in [K;\tfrac{1}{2}\e]$. Since the set $P$ is finite, for some $g\in P$ the set $\Omega=\{n\in\w:g_n=g\}$ is infinite. For any two numbers $n<m$ in $\Omega$ we have
\[
f_m-f_n=(f_m-g)-(f_n-g)\in [K;\tfrac{1}{2}\e]+[K;\tfrac{1}{2}\e]\subseteq [K;\e].
\]
It follows from $f_m-f_n\notin [S;\e]$ and $K\subseteq S$ that $f_m-f_n\notin [S\setminus K;\e]=[S\cap X\setminus K;\e]$. This means that the bounded subset $\{f_n{\restriction}_{S\setminus K}\}_{n\in\Omega}$ of the finite-dimensional space $C(S\cap X\setminus K)$ is not precompact, which is not possible. This contradiction shows that $\|\mu_n\|_B\not\to 0$. Therefore the sequence $(\mu_n)$  witnesses that the space $C_\TTT(M)$ is strongly Gelfand--Phillips. This completes the proof of the implications (iii)$\Ra$(i,iv).
%
\end{proof}

\begin{example} {\em
The compact space $M=\w+1=\w\cup\{\w\}$ endowed with the order topology and its $k$-dense subspace $X=\w$ satisfy the condition {\rm (vii)} of Theorem~\ref{t:main3} but not the conditions {\rm(iii)} and {\rm(vi)}. This example shows that {\rm (vii)} is not equivalent to any of the conditions {\rm (i)--(vi)}. Theorem~\ref{t:main3} implies that the locally convex space $C_{p{\restriction}\w}(\w+1)$ is not strongly Gelfand--Phillips in spite of the fact that the Banach space $C(\w+1)=\big(C_{p{\restriction}\w}(\w+1)\big)_{\!\beta}$ is isomorphic to $c_0$.\qed}
\end{example}

Theorem~\ref{t:main3} implies the following results.

\begin{theorem} \label{t:sGP-C}
For a Tychonoff space $X$ the following assertions are equivalent:
\begin{enumerate}
\item[{\rm(i)}] for every locally convex topology $\TTT$ on $C(X)$ satisfying $\TTT_p\subseteq \TTT\subseteq\TTT_k$, the space $C_\TTT(X)$ is strongly Gelfand--Phillips;
\item[{\rm(ii)}] there is  a locally convex vector topology $\TTT$ on $C(X)$ such that  $\TTT_p\subseteq \TTT\subseteq \TTT_k$ and the space $C_\TTT(X)$ is strongly Gelfand--Phillips;
\item[{\rm(iii)}] $X$ is a $\mu$-space containing a compact countable subspace $K\subseteq X$ of finite scattered height such that for every compact set $S\subseteq X$ the complement $S\setminus K$ is finite.
\end{enumerate}
If {\rm (i)--(iii)} hold, a sequence $(\mu_n)_{n\in\w} \subseteq C_\TTT(X)'$ witnessing the  strong Gelfand--Phillips property of $C_\TTT(X)$  can be chosen such that all $\mu_n$ have finite support contained in $K$.
\end{theorem}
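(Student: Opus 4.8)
The plan is to derive Theorem~\ref{t:sGP-C} as a special case of Theorem~\ref{t:main3} by choosing the ambient $\mu$-space $M$ appropriately and transporting the three conditions. Since Theorem~\ref{t:main3} requires $M$ to be a $\mu$-space with a dense subspace $X$, the natural move is to take $M=\mu X$, the Dieudonn\'e completion of $X$, which is a $\mu$-space containing $X$ as a dense subspace (by \cite[8.5.8, 8.5.13]{Eng}). Under this choice the topologies $\TTT_{p\restriction X}$ and $\TTT_{k\restriction X}$ on $C(\mu X)$ restrict, via the restriction isomorphism $C(\mu X)\to C(X)$, to $\TTT_p$ and $\TTT_k$ on $C(X)$; and since every compact subset of $\mu X$ meets $X$ in a set whose closure is again compact, $X$ is $k$-dense in $\mu X$. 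So conditions (iv)/(vi)/(vii) of Theorem~\ref{t:main3} become available.

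The subtle point is that Theorem~\ref{t:sGP-C}(i)--(ii) speak about all topologies $\TTT$ with $\TTT_p\subseteq\TTT\subseteq\TTT_k$ on $C(X)$ \emph{without} assuming a priori that $X$ is a $\mu$-space, whereas Theorem~\ref{t:main3} presupposes a $\mu$-space. So the first step is: assuming (ii), show $X$ must be a $\mu$-space. Here I would argue that if $X$ is not a $\mu$-space, then $X$ carries a closed functionally bounded non-compact set $B$, and one can produce a barrel-bounded, non-barrel-precompact set in $C_\TTT(X)$ on which every $w^*$-null sequence in $C_\TTT(X)'$ converges uniformly to zero (using that supports of $\TTT_k$-continuous functionals are compact by Lemma~\ref{l:3.2}, combined with Lemma~\ref{l:3.3} to bound the union of supports, plus Lemma~\ref{l:Ck=Cb} which fails exactly when $X$ is not a $\mu$-space); this contradicts the strong Gelfand--Phillips property. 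Once $X$ is known to be a $\mu$-space we may set $M=\mu X=X$ (as $\TTT_k=\TTT_{k\restriction X}$ and $X$ is $k$-dense and sequentially closed in itself), and apply the equivalence (iv)$\Leftrightarrow$(v)$\Leftrightarrow$(vi) of Theorem~\ref{t:main3} directly: (i)$\Leftrightarrow$(iv), (ii)$\Leftrightarrow$(v), and (iii) of Theorem~\ref{t:sGP-C} is precisely ``$X$ is a $\mu$-space'' plus (vi) of Theorem~\ref{t:main3}.

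Concretely the chain is: (iii)$\Rightarrow$(i): if $X$ is a $\mu$-space with the stated compact countable $K$, apply Theorem~\ref{t:main3}(vi)$\Rightarrow$(iv) with $M=X$; (i)$\Rightarrow$(ii) is trivial; (ii)$\Rightarrow$(iii): from (ii) first deduce $X$ is a $\mu$-space as above, then invoke Theorem~\ref{t:main3}(v)$\Rightarrow$(vi) (legitimate since $X$, being a $\mu$-space, is sequentially closed — indeed closed — in $M=X$, and trivially $k$-dense) to get the required $K$; combine with the $\mu$-space conclusion to obtain (iii). The final ``moreover'' clause about $(\mu_n)_{n\in\w}$ having finite support in $K$ is inherited verbatim from the corresponding clause of Theorem~\ref{t:main3}.

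The main obstacle I anticipate is the implication ``(ii) $\Rightarrow$ $X$ is a $\mu$-space'': Theorem~\ref{t:main3} is stated only for $\mu$-spaces, so this step cannot simply quote it and must be argued by hand. The key is that a strongly Gelfand--Phillips space must have \emph{enough} $w^*$-null functionals to detect every barrel-bounded non-barrel-precompact set, but if $X$ has a non-compact closed functionally bounded set $B$, the functions vanishing on a neighborhood of each compact set still separate points badly enough that the set $\{f\in C(X):\|f\|_X\le 1,\ f\restriction_B=0\}$ (a barrel-bounded set in $C_\TTT(X)$ that fails to be barrel-precompact by a Tietze--Urysohn argument on $B$, since $B$ is not compact) has $\|\mu_n\|$ on it equal to $0$ for every $\TTT_k$-continuous $\mu_n$, because $\mathrm{supp}(\mu_n)$ is compact hence disjoint from parts of $\overline B$ — contradiction. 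Making this precise will essentially replicate Claim~\ref{cl:finite} of Theorem~\ref{t:main3}, so the real work is already packaged there; I would either cite that internal argument or restate it briefly.
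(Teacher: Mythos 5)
Your overall architecture (reduce everything to Theorem~\ref{t:main3}, with (iii)$\Rightarrow$(i) via $M=X$ and the ``moreover'' clause inherited verbatim) matches the paper, but the step you yourself flag as the main obstacle --- ``(ii) $\Rightarrow$ $X$ is a $\mu$-space'' --- is where your proposal has a genuine gap, and the sketch you give for it does not work. First, the concrete claim is wrong: for the set $P=\{f\in C(X):\|f\|_X\le 1,\ f{\restriction}_B=0\}$ you assert $\|\mu_n\|_P=0$ ``because $\supp(\mu_n)$ is compact hence disjoint from parts of $\overline B$''; but $\|\mu_n\|_P=0$ requires $P\subseteq[\supp(\mu_n);0]$, i.e.\ $\supp(\mu_n)\subseteq B$ --- disjointness gives nothing, since functions vanishing on $B$ are unconstrained on $\supp(\mu_n)$. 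Second, and more structurally, you cannot ``essentially replicate Claim~\ref{cl:finite}'' inside $X$ when $X$ is not yet known to be a $\mu$-space: that claim uses precisely the $\mu$-property of the ambient space (to get that $\overline{\bigcup_n\supp(\mu_n)}$ is \emph{compact}, to invoke Nachbin--Shirota barrelledness of $C_k(M)$, and to apply Tietze--Urysohn to compact sets). Even if you run the argument with the correct set (functions vanishing on the closure of the union of supports), in a non-$\mu$-space that closure need not be compact, Tietze extension from a closed non-compact functionally bounded $B$ is unavailable, and the most you could extract is ``$B\setminus\overline S$ is finite for every closed functionally bounded $B$,'' which does not yield compactness of $\overline S$ --- so the argument is circular.

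The paper resolves this in the opposite order. Using Hewitt's theorem \cite[3.11.16]{Eng}, it embeds $X$ densely into a $\mu$-space $M$ for which the restriction operator $C(M)\to C(X)$ is \emph{bijective}, transports $\TTT$ to a topology $\tau$ on $C(M)$ with $\TTT_{p{\restriction}X}\subseteq\tau\subseteq\TTT_{k{\restriction}X}$, and applies the implication (ii)$\Rightarrow$(iii) of Theorem~\ref{t:main3} there to produce the compact countable $K\subseteq X$ with $F\cap X\setminus K$ finite for every compact $F\subseteq M$. Only \emph{then} is the $\mu$-property of $X$ deduced, as a consequence of the existence of $K$: a functionally bounded $F\subseteq X$ is functionally bounded in $M$, so its closure $\overline F$ in $M$ is compact, $\overline F\cap X\setminus K$ is finite, and hence $(\overline F\cap X)\cup K$ is a compact subset of $X$ containing $F$. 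I recommend you restructure your (ii)$\Rightarrow$(iii) along these lines rather than trying to prove the $\mu$-property up front. (A minor additional point: your justification that $X$ is $k$-dense in $\mu X$ is also not valid --- $k$-density requires a compact $S\supseteq F$ with $S\cap X$ dense in $S$, not merely that $\overline{F\cap X}$ is compact --- but this is harmless since after the reduction to $M=X$ you never need it.)
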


\begin{proof}
The implication (i)$\Rightarrow$(ii) is trivial.
\smallskip

To prove that (ii)$\Ra$(iii), assume that $\Tau$ is a locally convex topology on $C(X)$ such that $\TTT_p\subseteq \TTT\subseteq\TTT_k$ and the lcs $C_\TTT(X)$ is strongly Gelfand--Phillips. By the classical result of Hewitt \cite[3.11.16]{Eng}, $X$ is a dense subspace of some $\mu$-space $M$ such that the restriction operator $C(M)\to C(X)$, $f\mapsto f{\restriction}_X$, is bijective. If $X$ is a $\mu$-space, then we can (and will) assume that $M=X$. Let $\tau$ be a unique locally convex topology on the function space $C(M)$ such that the restriction operator $C_\tau(M)\to C_\TTT(X)$ is a topological isomorphism. Then the spaces $C_{p{\restriction}X}(M), C_\tau(M)$ and $C_{k{\restriction}X}(M)$ are topologically isomorphic to the spaces $C_p(X)$, $C_\TTT(X)$ and $\CC(X)$, respectively. By Theorem~\ref{t:main3}, $X$ contains a compact countable set $K$ of finite scattered height such that for any compact subset $F\subseteq M$ the set $F\cap X\setminus K$ is finite. In particular, for any compact subset $S\subseteq X$, the complement $S\setminus K=S\cap {\color{red}X}\setminus K$ is finite. It remains to prove that $X$ is a $\mu$-space. As the restriction operator $C(M)\to C(X)$ is bijective, every functionally bounded set $F$ in $X$ is functionally bounded in $M$. Since $M$ is a $\mu$-space, $F$ has compact closure $\bar F$ in $M$. Since the set $\bar F\cap X\setminus  K$ is finite, the set $(\bar F\cap X)\cup K\subseteq X$ is compact. Now we see that the functionally bounded set $F$ is contained in the compact subset $(\bar F\cap X)\cup K$ of $X$, which means that $X$ is a $\mu$-space.
\smallskip

The implication (iii)$\Ra$(i) follows from Theorem~\ref{t:main3} applied to the $\mu$-space $M=X$ and its subset $X$.
\end{proof}

By Lemma \ref{l:Ck=Cb}, if $X$ is a $\mu$-space, then $\TTT_k=\TTT_b$.
This fact and Theorem \ref{t:sGP-C} immediately imply the next result.

\begin{corollary} \label{c:sGP-b}
For a $\mu$-space $X$, the following assertions are equivalent:
\begin{enumerate}
\item[{\rm(i)}] for every locally convex topology $\TTT$ on $C(X)$ satisfying $\TTT_p\subseteq \TTT\subseteq\TTT_b$, the space $C_\TTT(X)$ is strongly Gelfand--Phillips;
\item[{\rm(ii)}] there is  a locally convex vector topology $\TTT$ on $C(X)$ such that  $\TTT_p\subseteq \TTT\subseteq \TTT_b$ and the space $C_\TTT(X)$ is strongly Gelfand--Phillips;
\item[{\rm(iii)}] $X$ contains a compact countable subspace $K\subseteq X$ of finite scattered height such that for every compact set $S\subseteq X$ the complement $S\setminus K$ is finite.
\end{enumerate}
If {\rm (i)--(iii)} hold, a sequence $(\mu_n)_{n\in\w} \subseteq C_\TTT(X)'$ witnessing the  strong Gelfand--Phillips property of $C_\TTT(X)$  can be chosen such that all $\mu_n$ have finite support contained in $K$.
\end{corollary}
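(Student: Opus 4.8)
The plan is to derive the corollary directly from Theorem~\ref{t:sGP-C} together with Lemma~\ref{l:Ck=Cb}, with essentially no new work. First I would invoke Lemma~\ref{l:Ck=Cb}: since $X$ is assumed to be a $\mu$-space, that lemma gives $\CC(X)=C_b(X)$, i.e. the compact-open topology $\TTT_k$ on $C(X)$ coincides with the topology $\TTT_b$ of uniform convergence on functionally bounded subsets of $X$. Consequently the family of locally convex topologies $\TTT$ satisfying $\TTT_p\subseteq\TTT\subseteq\TTT_b$ is literally the same family as the one satisfying $\TTT_p\subseteq\TTT\subseteq\TTT_k$ that appears in Theorem~\ref{t:sGP-C}. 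Hence assertions~(i) and~(ii) of the corollary are verbatim restatements of assertions~(i) and~(ii) of Theorem~\ref{t:sGP-C}, so they are equivalent to one another and to assertion~(iii) of that theorem by Theorem~\ref{t:sGP-C}.

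It then remains to match assertion~(iii) of Theorem~\ref{t:sGP-C} with assertion~(iii) of the corollary. The former states that $X$ is a $\mu$-space containing a compact countable subspace $K\subseteq X$ of finite scattered height such that $S\setminus K$ is finite for every compact $S\subseteq X$; since $X$ is a $\mu$-space by hypothesis, the first clause is automatically satisfied, and the condition reduces precisely to assertion~(iii) of the corollary. Finally, the supplementary statement that a witnessing sequence $(\mu_n)_{n\in\w}\subseteq C_\TTT(X)'$ may be chosen with every $\mu_n$ having finite support contained in $K$ is inherited verbatim from the corresponding supplementary statement in Theorem~\ref{t:sGP-C}. There is no genuine obstacle here: the entire argument is a translation of Theorem~\ref{t:sGP-C} along the identity $\TTT_k=\TTT_b$ furnished by Lemma~\ref{l:Ck=Cb}, the only point worth recording being that the $\mu$-space clause in condition~(iii) of Theorem~\ref{t:sGP-C} may be dropped here because it has been absorbed into the standing hypothesis on $X$.
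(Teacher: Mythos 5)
Your proposal is correct and coincides with the paper's own derivation: the paper also obtains this corollary by noting that Lemma~\ref{l:Ck=Cb} gives $\TTT_k=\TTT_b$ for a $\mu$-space $X$ and then applying Theorem~\ref{t:sGP-C}, with the $\mu$-space clause of condition~(iii) absorbed into the standing hypothesis. Nothing further is needed.
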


A topological space $X$ is defined to be {\em bounded-finite} if every functionally bounded set in $X$ is finite. It is clear that each bounded-finite space is a $\mu$-space. Corollary~\ref{c:sGP-b} implies the following helpful assertion.

\begin{corollary} \label{c:C0-sGP}
For any bounded-finite Tychonoff space $X$, the function space $C_p(X)=\CC(X)=C_b(X)$ is strongly Gelfand--Phillips.
\end{corollary}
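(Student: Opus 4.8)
The plan is to derive the statement as an immediate application of Corollary~\ref{c:sGP-b}. First I would record the trivial observations that make the three topologies coincide: if $X$ is bounded-finite, then the family of functionally bounded subsets of $X$ is exactly the family of finite subsets of $X$; since every finite set is compact and every compact set is functionally bounded, the families of finite, compact, and functionally bounded subsets of $X$ all coincide. Hence $\TTT_p=\TTT_k=\TTT_b$ on $C(X)$, so $C_p(X)=\CC(X)=C_b(X)$, and $X$ is (trivially) a $\mu$-space, so that Corollary~\ref{c:sGP-b} is applicable.

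Next I would verify condition (iii) of Corollary~\ref{c:sGP-b}, taking $K=\emptyset$ as the witnessing subspace: the empty space is compact, countable, and of scattered height $0$, hence of finite scattered height, and for every compact set $S\subseteq X$ the complement $S\setminus K=S$ is finite because $X$ is bounded-finite. Thus all three equivalent conditions of Corollary~\ref{c:sGP-b} hold; in particular, for every locally convex topology $\TTT$ with $\TTT_p\subseteq\TTT\subseteq\TTT_b$ the space $C_\TTT(X)$ is strongly Gelfand--Phillips, and since $\TTT_p=\TTT_b$ this says precisely that $C_p(X)=\CC(X)=C_b(X)$ is strongly Gelfand--Phillips, with a witnessing sequence $(\mu_n)_{n\in\w}$ of finitely supported functionals.

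Since the whole argument is a two-line deduction from the already established Corollary~\ref{c:sGP-b}, there is no real obstacle here. The only point that deserves a word of care is the legitimacy of $K=\emptyset$ as a witness: one should note that the empty compact space does have finite scattered height. Alternatively one may take $K$ to be any singleton subset of $X$, which is compact, countable, and scattered of height $1$, and the argument goes through verbatim.
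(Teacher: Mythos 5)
Your proposal is correct and follows exactly the route the paper intends: the paper derives this corollary directly from Corollary~\ref{c:sGP-b} after noting that every bounded-finite space is a $\mu$-space, and your verification of condition (iii) (with $K=\emptyset$ or a singleton, using that every compact subset of a bounded-finite space is finite) together with the observation that $\TTT_p=\TTT_k=\TTT_b$ supplies precisely the details the paper leaves implicit.
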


In the next corollary of Theorem~\ref{t:main3} we characterize Tychonoff spaces $X$ for which the space of bounded functions $C^b(X)$ endowed with a locally convex vector topology $\TTT$ between $\TTT_p$ and $\TTT_n$ has the strong Gelfand--Phillips property, where $\TTT_n$ denotes the topology on $C^b(X)$ generated by the norm $\|\cdot\|_X$.

\begin{theorem}\label{t:Cb-sGP}
For any Tychonoff space $X$, the following assertions are equivalent:
\begin{enumerate}
\item[{\rm(i)}] for every locally convex topology $\TTT$ on $C^b(X)$ satisfying $\TTT_p\subseteq \TTT\subseteq \TTT_n$, the space $C_\TTT^b(X)$ is strongly Gelfand--Phillips;
\item[{\rm(ii)}] there is  a locally convex topology $\TTT$ on $C^b(X)$ such that $\TTT_p\subseteq \TTT\subseteq \TTT_n$ and the space $C_\TTT^b(X)$ is strongly Gelfand--Phillips property;
\item[{\rm(iii)}] $X$ is a  compact countable space of finite scattered height.
\end{enumerate}
\end{theorem}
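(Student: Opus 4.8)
The plan is to reduce the statement to the Banach-space characterizations in Theorems \ref{t:Banach-strong-GP} and \ref{t:Banach-C(K)-strong-GP}, by identifying $C^b(X)$ with $C(\beta X)$. The implication (i)$\Rightarrow$(ii) is trivial, so the content is in (ii)$\Rightarrow$(iii) and (iii)$\Rightarrow$(i). (Recall that in this paper all locally convex spaces are infinite-dimensional, so $C^b(X)$ being infinite-dimensional forces $X$, and hence $\beta X$, to be infinite.)

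For (ii)$\Rightarrow$(iii), fix $\TTT$ with $\TTT_p\subseteq\TTT\subseteq\TTT_n$ such that $C^b_\TTT(X)$ is strongly Gelfand--Phillips. The continuous-extension map $f\mapsto\beta f$ is a linear bijection of $C^b(X)$ onto $C(\beta X)$ carrying the norm $\|\cdot\|_X$ onto the sup-norm of $C(\beta X)$ (by density of $X$ in $\beta X$) and $\TTT_p$ onto $\TTT_{p{\restriction}X}$, so $\TTT$ corresponds to a topology on $C(\beta X)$ between $\TTT_{p{\restriction}X}$ and $\TTT_n=\TTT_k$. Since the subspace $L:=C_p(X)'=\spn\{\delta_x:x\in X\}$ of $C(\beta X)'$ is norming for the Banach space $C^b_n(X)=(C(\beta X),\|\cdot\|)$ and $\sigma(C^b(X),L)=\TTT_p\subseteq\TTT$, Lemma \ref{l:JNP-norming} gives $(C^b_\TTT(X))_\beta=C^b_n(X)$; in particular $C^b_\TTT(X)$ is $\beta$-Banach. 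As $(C^b_\TTT(X))_\beta=C^b_n(X)$ is a Banach space, hence barrelled, Proposition \ref{p:sGP-hereditary}(iii) shows that the Banach space $C^b_n(X)=C(\beta X)$ is strongly Gelfand--Phillips, so by Theorem \ref{t:Banach-strong-GP} it embeds into $c_0$. Since $\beta X$ is an infinite compact (hence pseudocompact) space, Theorem \ref{t:Banach-C(K)-strong-GP} yields that $\beta X$ is countable, compact and of finite scattered height.

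It remains to prove that $X=\beta X$, which will give (iii). Being countable, compact and Hausdorff, $\beta X$ is metrizable, and hence so is its subspace $X$. If $X$ were not compact, it would fail to be countably compact and would therefore contain a closed discrete copy $D\cong\w$ of the naturals; as metrizable spaces are collectionwise normal, $D$ is $C^*$-embedded in $X$, so the closure of $D$ in $\beta X$ would be homeomorphic to $\beta\w$ — an uncountable space sitting inside the countable space $\beta X$, which is absurd. Hence $X$ is compact, and being dense in $\beta X$ it equals $\beta X$; thus $X$ is a countable compact space of finite scattered height.

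For (iii)$\Rightarrow$(i), let $X$ be a countable compact space of finite scattered height. Then $X$ is a $\mu$-space, $C^b(X)=C(X)$, and on $C(X)$ one has $\TTT_n=\TTT_k$. Taking $K:=X$ — a compact countable subspace of $X$ of finite scattered height for which $S\setminus K=\varnothing$ is finite for every compact $S\subseteq X$ — we see that condition (iii) of Theorem \ref{t:sGP-C} is satisfied, so Theorem \ref{t:sGP-C}(i) gives that every locally convex topology $\TTT$ on $C(X)=C^b(X)$ with $\TTT_p\subseteq\TTT\subseteq\TTT_k=\TTT_n$ makes $C_\TTT(X)=C^b_\TTT(X)$ strongly Gelfand--Phillips. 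The one nonroutine point is the implication ``$\beta X$ countable $\Rightarrow$ $X$ compact'' in the previous paragraph; everything else is an assembly of the already-established results. An alternative route avoiding Theorems \ref{t:Banach-strong-GP}--\ref{t:Banach-C(K)-strong-GP} is to apply Theorem \ref{t:main3} to the $\mu$-space $M=\beta X$, in which $X$ is $k$-dense, using the implication (v)$\Rightarrow$(vii) together with the same argument excluding $X\subsetneq\beta X$.
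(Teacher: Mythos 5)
Your proof is correct, and for the direction (ii)$\Rightarrow$(iii) it takes a genuinely different route from the paper. The paper transfers $\TTT$ to a topology $\tau$ on $C(\beta X)$ with $\TTT_{p{\restriction}X}\subseteq\tau\subseteq\TTT_k$, notes that $X$ is $k$-dense in the compact space $\beta X$, and invokes the implication (v)$\Rightarrow$(vii) of Theorem~\ref{t:main3} to produce a compact countable set $K$ of finite scattered height with $\beta X=X\cup K$ and $\beta X\setminus K$ finite. You instead bypass Theorem~\ref{t:main3} entirely in this direction: you observe that the span of the Dirac functionals $\{\delta_x:x\in X\}$ is norming for the Banach space $C^b_n(X)\cong C(\beta X)$, so Lemma~\ref{l:JNP-norming} gives $(C^b_\TTT(X))_\beta=C^b_n(X)$, Proposition~\ref{p:sGP-hereditary}(iii) pushes the strong Gelfand--Phillips property up to the Banach space $C(\beta X)$, and Theorem~\ref{t:Banach-C(K)-strong-GP} then yields that $\beta X$ is countable, compact and of finite scattered height. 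This is a cleaner reduction to the already-established Banach-space classification and avoids re-running the heavy machinery of Theorem~\ref{t:main3}; the paper's route, on the other hand, keeps the whole proof inside the framework of Theorem~\ref{t:main3} and produces the set $K$ directly as (a compact set containing) the closure of the supports of the witnessing functionals. Both proofs conclude identically: $\beta X$ countable forces metrizability, and the $\beta\w$-in-the-closure argument (Engelking, Corollary 3.6.8) excludes $X\subsetneq\beta X$. Your (iii)$\Rightarrow$(i) is the same as the paper's, merely routed through Theorem~\ref{t:sGP-C} rather than Theorem~\ref{t:main3} directly.
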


\begin{proof}
The implication (i)$\Rightarrow$(ii) is trivial.
\smallskip

(ii)$\Rightarrow$(iii) Let $\TTT$ be a locally convex topology on $C^b(X)$ such that $\TTT_p\subseteq \TTT\subseteq \TTT_n$ and the space $C_\TTT^b(X)$ is strongly Gelfand--Phillips. Let  $M=\beta X$ be the Stone-\v Cech compactification of $X$. It is clear that the restriction map $R:C(M)\to C^b(X)$, $R:f\mapsto f{\restriction}_X$, is bijective. Let $\tau$ be the unique locally convex topology on $C(M)$ such that the restriction operator $R:C_\tau(M)\to C^b_\TTT(X)$ is a topological isomorphism. The inclusions $\TTT_p\subseteq \TTT\subseteq\TTT_n$ imply that $\TTT_{p{\restriction}X}\subseteq\tau\subseteq\TTT_k$.  Since the space $M=\beta X$ is compact, the dense subspace $X$ of $M$ is  $k$-dense in $M$. Then the implication (v)$\Ra$(vii) in Theorem~\ref{t:main3} yields a compact countable subspace $K\subseteq M$ of finite scattered height such that $M=X\cup K$ and for every compact set $S\subseteq M$ the complement  $S\setminus K\subseteq X$ is finite. In particular,  for $S=M$ we obtain that the set $M\SM K=X\SM K$ is finite and hence the compact space $M=(M\SM K)\cup K$ is countable and metrizable.  We claim that $X$ is closed in $M$. In the opposite case, we can find a point $x_\infty\in M\SM X$ and a one-to-one sequence $(x_n)_{n\in\w}\subseteq X$ converging to $x_\infty$. Therefore the set $D=\{x_n\}_{n\in\w}$ is closed and discrete in $X$. Since $X$ is metrizable, it is normal and hence, by Corollary 3.6.8 of \cite{Eng}, the closure $\overline{D}$ of $D$ in $M=\beta X$ is homeomorphic to $\beta D=\beta\w$ which is impossible since $\beta\w$ is not metrizable. This contradiction shows that $X$ is closed in $M$ and hence $M=X$ by the density of $X$ in $M$. Since $M\setminus K$ is finite, the space $M=X$ is compact, countable and has finite scattered height.
\smallskip

(iii)$\Rightarrow$(i) If $X$ is a countable compact space of  finite scattered height, then $X$ is a $\mu$-space, $C^b_\TTT(X)=C_\TTT(X)$ and the assertion follows from Theorem \ref{t:main3}.
\end{proof}

As an immediate corollary of Theorem~\ref{t:Cb-sGP}  we obtain the following complement of Theorem \ref{t:Banach-C(K)-strong-GP}.

\begin{corollary}\label{c:Cb-sGP}
For a pseudocompact space $X$, the following assertions are equivalent:
\begin{enumerate}
\item[{\rm(i)}] for every locally convex topology $\TTT$ on $C(X)$ satisfying $\TTT_{p}\subseteq \TTT\subseteq \TTT_b=\TTT_n$, the space $C_\TTT(X)$ has the strong Gelfand--Phillips property;
\item[{\rm(ii)}] there is  a locally convex  topology $\TTT$ on $C(X)$ such that $\TTT_{p}\subseteq \TTT\subseteq \TTT_b=\TTT_n$ and the space $C_\TTT(X)$ has the strong Gelfand--Phillips property;
\item[{\rm(iii)}] $X$ is a countable compact space of finite scattered height.
\end{enumerate}
\end{corollary}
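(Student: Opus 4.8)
The plan is to obtain Corollary~\ref{c:Cb-sGP} as an immediate specialisation of Theorem~\ref{t:Cb-sGP}, once the pseudocompactness of $X$ is used to identify the ambient objects appearing in the two statements. First I would record two routine identifications. Since $X$ is pseudocompact, $X$ is functionally bounded in itself, so $\|f\|_X<\infty$ for every $f\in C(X)$; hence $C(X)=C^b(X)$ as vector spaces. Second, the topology $\TTT_b$ of uniform convergence on functionally bounded subsets of $X$ coincides with the norm topology $\TTT_n$ generated by $\|\cdot\|_X$: on the one hand $[X;\e]$ is a $\TTT_b$-neighbourhood of zero because $X$ itself is functionally bounded, so $\TTT_n\subseteq\TTT_b$; on the other hand, for every functionally bounded $B\subseteq X$ and every $\e>0$ we have $[X;\e]\subseteq[B;\e]$, so $\TTT_b\subseteq\TTT_n$. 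Thus $\TTT_b=\TTT_n$, which is precisely the identification tacitly written in the statement of the corollary.

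With these two identifications in hand, a locally convex topology $\TTT$ on $C(X)$ satisfying $\TTT_p\subseteq\TTT\subseteq\TTT_b=\TTT_n$ is literally the same thing as a locally convex topology on $C^b(X)$ satisfying $\TTT_p\subseteq\TTT\subseteq\TTT_n$, and $C_\TTT(X)=C^b_\TTT(X)$ for every such $\TTT$. Consequently, condition~(i) (resp.\ (ii)) of Corollary~\ref{c:Cb-sGP} is, word for word, condition~(i) (resp.\ (ii)) of Theorem~\ref{t:Cb-sGP} applied to the Tychonoff space $X$, and condition~(iii) is verbatim the same in both statements. Hence the equivalence (i)$\Leftrightarrow$(ii)$\Leftrightarrow$(iii) follows at once from Theorem~\ref{t:Cb-sGP}. (As a sanity check on direction (iii)$\Rightarrow$(i): a countable compact space $X$ of finite scattered height is a $\mu$-space, one may take $K=X$, so that $S\setminus K=\emptyset$ is finite for every compact $S\subseteq X$; thus the conclusion is also contained in Corollary~\ref{c:sGP-b}.)

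There is essentially no obstacle here beyond the pseudocompact bookkeeping just described; the entire mathematical content has already been carried out in Theorem~\ref{t:Cb-sGP} (and, upstream, in Theorem~\ref{t:main3} and Theorem~\ref{t:Banach-C(K)-strong-GP}), so the write-up will be short. The one point deserving an explicit sentence is the equality $\TTT_b=\TTT_n$ for pseudocompact $X$, which is exactly the reason this corollary is a genuine complement of Theorem~\ref{t:Banach-C(K)-strong-GP}: it upgrades the norm-topology statement there to the whole range of intermediate topologies between $\TTT_p$ and $\TTT_n$.
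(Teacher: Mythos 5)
Your proposal is correct and matches the paper's intent exactly: the paper states this as an immediate consequence of Theorem~\ref{t:Cb-sGP}, giving no further argument, and your write-up simply supplies the two routine identifications ($C(X)=C^b(X)$ and $\TTT_b=\TTT_n$ for pseudocompact $X$) that make the specialisation literal. Nothing is missing.
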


\section{Examples} \label{sec:exa-sGP}


In this section we show that the class of all locally convex spaces with  the strong Gelfand--Phillips property behaves badly with respect to standard operations as taking completions, infinite products or (closed) subspaces. We also construct a Banach space that cannot be embedded into a strongly Gelfand--Phillips locally convex space.

We start from the operation of taking infinite power of a locally convex space. By Proposition \ref{p:finite-prod-sGP} the finite product of strongly Gelfand--Phillips spaces is strongly Gelfand--Phillips. But for infinite products this is not true in general as the next example shows. 

\begin{example} \label{exa:c-product-sGP}
The Banach space $c_0$ is strongly Gelfand--Phillips but its countable power $(c_0)^\w$ is not strongly Gelfand--Phillips.
\end{example}

\begin{proof} By Theorem~\ref{t:Banach-C(K)-strong-GP}, the Banach space $c_0$ is strongly Gelfand--Phillips. To show that $(c_0)^\w$ is not strongly Gelfand--Phillips, we recall first that the Banach space $c_0$ is isomorphic to the Banach space $C(K)$ for any countable compact space $K$ with a unique non-isolated point. Then the countable power $(c_0)^\w$ is isomorphic to the function space $C_k(K\times\w)$ where $\w$ is endowed with the discrete topology. By Theorem~\ref{t:sGP-C}, the function space $C_k(K\times \w)$ is not strongly Gelfand--Phillips.
\end{proof}

Let us recall that a locally convex space $E$ is {\em feral} if every bounded subset of $E$ is finite-dimensional. Observe that if a normed space $E$ is feral, then the closed unit ball of $E$ is finite-dimensional and hence $E$ is finite-dimensional as well. We shall consider a wider class of locally convex spaces introduced in \cite{BG-b-feral}:  a locally convex space $E$ is called {\em $b$-feral} if every barrel-bounded subset of $E$ is finite-dimensional. Clearly, every feral spaces is $b$-feral, but the converse is not true in general, see Example \ref{exa:b-feral-normed} below.
The next simple assertion is proved in  \cite{BG-b-feral}, we give its proof to make the paper more self-contained.

\begin{proposition}[\cite{BG-b-feral}] \label{p:semi-M-sGP}
Every $b$-feral space $E$ is strongly Gelfand--Phillips.
\end{proposition}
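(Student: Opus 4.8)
The statement to prove is that every $b$-feral locally convex space $E$ is strongly Gelfand--Phillips. Recall that $E$ is $b$-feral if every barrel-bounded subset of $E$ is finite-dimensional. The key observation is that a $b$-feral space contains \emph{no} barrel-bounded set which is not barrel-precompact: if $B\subseteq E$ is barrel-bounded, then by $b$-ferality its closed linear span $L$ is finite-dimensional, $B$ is a bounded subset of a finite-dimensional space, hence $B$ is relatively compact in $L$, hence precompact in $E$, hence barrel-precompact (precompactness with respect to the coarser topology $\beta(E,E')$ follows from precompactness in the original topology of $L$, or more directly: any barrel of $E$ meets $L$ in a barrel, which is a $0$-neighborhood of the finite-dimensional space $L$, and a bounded subset of a finite-dimensional space is covered by finitely many translates of any $0$-neighborhood).

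The plan is then simply this. First I would record the elementary fact just described: in a $b$-feral space every barrel-bounded subset is barrel-precompact. Second, I would invoke Definition~\ref{def:GP-strong}: to establish the strong Gelfand--Phillips property one must produce a weak$^\ast$ null sequence $\{\chi_n\}_{n\in\w}$ in $E'$ with $\|\chi_n\|_B\not\to 0$ for every barrel-bounded, non-barrel-precompact $B$. Since there are no such sets $B$ in a $b$-feral space, the condition is satisfied \emph{vacuously}: any weak$^\ast$ null sequence in $E'$ works, and such a sequence exists trivially (for instance the constant zero sequence, or indeed we need not even exhibit one beyond noting the empty quantification). Hence $E$ is strongly Gelfand--Phillips.

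There is essentially no obstacle here; the content is entirely in the vacuous-quantifier observation, which is why the authors call it a "simple assertion". The only point requiring a sentence of care is the verification that a bounded subset of a finite-dimensional subspace $L$ of $E$ is barrel-precompact in $E$, i.e.\ precompact for $\beta(E,E')$. For this one uses that $L\cap U$ is absorbing, convex, and closed in $L$ for every barrel $U\subseteq E$, hence (finite dimension) a $0$-neighborhood in $L$, so a bounded — thus totally bounded — subset of $L$ is covered by finitely many translates of $L\cap U\subseteq U$; this gives precompactness of $B$ for the topology $\beta(E,E')$, that is, barrel-precompactness in $E$. With that lemma in hand the proof of Proposition~\ref{p:semi-M-sGP} is two lines.
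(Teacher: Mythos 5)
Your proof is correct and takes essentially the same route as the paper: the paper's argument is precisely that every barrel-bounded set, being finite-dimensional, is barrel-precompact, and that the constant zero sequence then witnesses the strong Gelfand--Phillips property vacuously. The only difference is that you spell out why a barrel-bounded subset of a finite-dimensional subspace is barrel-precompact in $E$, a step the paper leaves implicit.
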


\begin{proof}
By definition every barrel-bounded subset of $E$ is finite-dimensional and hence is barrel-precompact. Then the constant sequence of zero functionals on $E$ witnesses that $E$ is strongly Gelfand--Phillips.
\end{proof}

Although the $b$-feralness of $E$ in the next example follows from a general assertion proved in \cite{BG-JNP}, we provide a simple direct proof for the reader convenience and to make the article self-contained.

\begin{example} \label{exa:b-feral-normed}
Let $E$ be the subspace of the Banach space $\ell_1$ consisting of all eventually zero sequences. The normed space $E$ is $b$-feral (but not feral) and hence it has the strong Gelfand--Phillips property. However, the completion $\ell_1$ of $E$ is not strongly Gelfand--Phillips.
\end{example}

\begin{proof}
Suppose for a contradiction that there is an infinite-dimensional barrel-bounded subset $A$ of $E$. So there are a strictly increasing sequence $(n_k)_{k\in\w}$ of natural numbers and a  sequence $\{x_k\}_{k\in\w}\subseteq A$ such that $\supp(x_k)\subseteq \{0,\dots, n_{k+1}-1\}$ and $a_{n_k}:=|x_k(n_k)|\not=0$. For every $n\in \w\SM\{n_k\}_{k\in\w}$ we put $a_n:=1$. Consider the set $U:=E\cap \prod_{n\in\w} \big[ -\tfrac{a_n}{n+1},\tfrac{a_n}{n+1}\big]$. It is clear that $U$ is a barrel in $E$. However, for every $\lambda\in\w$ there is $k\in\w$ such that $\lambda<n_k$ and hence $x_k\not\in \lambda U$. Thus $A$ is not barrel-bounded, a contradiction. Thus $E$ is $b$-feral. Being an infinite-dimensional normed space, $E$ cannot be feral.

By Proposition \ref{p:semi-M-sGP}, the space $E$ has the strong Gelfand--Phillips property. However, the completion $\ell_1$ of $E$ is not  strongly Gelfand--Phillips  by Theorem \ref{t:Banach-strong-GP} (or Theorem \ref{t:strong-GP}).
\end{proof}

Strongly Gelfand--Phillips spaces can contain dense subspaces without the strong Gelfand--Phillips property as the following example shows. In this example the ordinal $\w$ is endowed with the discrete topology.

\begin{example} \label{exa:dense-subspace-sGP}
The space $C_p(\w)=\mathbb{F}^\w$ is strongly Gelfand--Phillips  {\rm by  Corollary~\ref{c:C0-sGP}, but} its dense subspace $C_p^b(\w)$ is not strongly Gelfand--Phillips, {\rm by Theorem \ref{t:Cb-sGP}.}
\end{example}


A topological space $X$ is called {\em compact-finite} if every compact subspace of $X$ is finite. A map $f:Y\to Z$ is called {\em hereditarily quotient} if for any subspace $B\subseteq Z$ the map $f{\restriction}_B:B\to Z$ is quotient. By \cite[2.4.F(a)]{Eng}, a map $f:Y\to Z$ is hereditarily quotient if and only if for any $z\in Z$ and any open set $U\subseteq Y$ containing $f^{-1}(z)$ the image $f(U)$ is a neighborhood of $z$.

Let $X$ be a $T_1$-space. The {\em compact-finite resolution} of $X$ at a point $x\in X$ is the space
\[
\mathcal{R}_x(X)=\{x\}\cup\big((X\setminus\{x\})\times\w\big)
\]
endowed with the topology consisting of the sets $U\subseteq \mathcal{R}_x(X)$ such that if $x\in U$, then the set
\[
\{x\}\cup\big\{y\in X: (\{y\}\times\w)\setminus U \; \mbox{ is finite}\big\}
\]
is a neighborhood of $x$ in $X$. It follows that $\mathcal{R}_x(X)\setminus \{x\}$ is an open discrete subspace of $\mathcal{R}_x(X)$. The space $\mathcal{R}_x(X)$ is discrete if and only if $x$ is an isolated point of $X$. If the point $x$ is non-isolated in $X$, then $x$ is a unique non-isolated points of $\mathcal{R}_x(X)$. The definition of the topology on the space $\mathcal{R}_x(X)$ implies that the map $q_x:\mathcal{R}_x(X)\to X$ defined by
\[
q_x(z)=\begin{cases}
x&\mbox{if $z=x$};\\
y&\mbox{if $z=(y,n)\in (X\setminus\{x\})\times\w$};
\end{cases}
\]
is continuous and open at the point $x\in\mathcal{R}_x(X)$ (recall that a function $f:Y\to Z$ between topological spaces is called {\em open at} a point $y\in Y$ if for any neighborhood $O_y$ of the point $y$ in $Y$ the image $f(O_y)$ is a neighborhood of the point $f(y)$ in $Z$).
The {\em compact-finite resolution} of the space $X$ is the topological sum
\[
\mathcal{R}(X)=\bigoplus_{x\in X}\mathcal{R}_x(X)=\bigcup_{x\in X}\mathcal{R}_x(X)\times\{x\}.
\]

Example \ref{exa:closed-non-sGP} below shows that a  strongly Gelfand--Phillips space may contain a {\em closed} subspace which does not have the strong Gelfand--Phillips property. To construct this example we need the following theorem which is of independent interest.

\begin{theorem}\label{t:T1-quotient-resol}
Let $X$ be a $T_1$-space. Then
\begin{enumerate}
\item[{\rm(i)}] $\mathcal{R}(X)$ is a zero-dimensional, paracompact  and compact-finite $\mu$-space;
\item[{\rm(ii)}]  the map $q:\mathcal{R}(X)\to X$, $ q:(z,x)\mapsto q_x(z)$,  is  hereditarily quotient;
\item[{\rm(iii)}] $C_p(X)$ is a closed subspace of the space $C_p\big(\mathcal{R}(X)\big)$.
\end{enumerate}
\end{theorem}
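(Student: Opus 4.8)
The plan is to verify the three assertions more or less independently, using the concrete description of the topology on $\mathcal R_x(X)$ and on the topological sum $\mathcal R(X)=\bigoplus_{x\in X}\mathcal R_x(X)$.

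\emph{Part (i).} Since $\mathcal R(X)$ is a topological sum of the spaces $\mathcal R_x(X)$, it suffices to check that each $\mathcal R_x(X)$ is zero-dimensional, paracompact, compact-finite, and a $\mu$-space; all four properties are preserved by arbitrary topological sums. Each $\mathcal R_x(X)$ has a single (possibly isolated) point $x$ whose neighborhoods are described combinatorially, while $\mathcal R_x(X)\setminus\{x\}$ is open and discrete. First I would exhibit a base of clopen sets: a basic neighborhood of $x$ of the form $\{x\}\cup\big((Y\setminus\{x\})\times\w\big)$ with $Y$ a neighborhood of $x$ in $X$ is clopen, and every point of $(X\setminus\{x\})\times\w$ is isolated; this gives zero-dimensionality. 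For compact-finiteness: if $C\subseteq\mathcal R_x(X)$ is compact and infinite, then either $C\subseteq(X\setminus\{x\})\times\w$, which is discrete, so $C$ is finite — contradiction — or $x\in C$, and then, choosing for each $y$ with $\{y\}\times\w$ meeting $C$ a point of $C$ in $\{y\}\times\w$ together with the basic neighborhood $\{x\}\cup((X\setminus\{x\})\times\w)$ of $x$, one produces an open cover of $C$ with no finite subcover (the isolated points cannot be covered by finitely many of the chosen singletons together with a single neighborhood of $x$ unless only finitely many are present); the precise bookkeeping shows $C$ must be finite. Since every compact subset is finite, $\mathcal R_x(X)$ is compact-finite, hence trivially a $\mu$-space once we know it is a $\mu$-space, i.e.\ functionally bounded sets have compact closure; but in fact I would argue that functionally bounded subsets of $\mathcal R_x(X)$ are finite: using the clopen partition of $\mathcal R_x(X)\setminus\{x\}$ into the columns $\{y\}\times\w$ one builds an unbounded continuous function on any infinite subset of $\mathcal R_x(X)\setminus\{x\}$, and the definition of neighborhoods of $x$ shows any infinite subset meeting infinitely many columns in a bounded way can likewise be separated. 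Paracompactness follows since $\mathcal R_x(X)$ is zero-dimensional and, being a space with at most one non-isolated point, is even a $P$-like space that is easily seen to be paracompact (every open cover is refined by the clopen partition into isolated points plus one basic neighborhood of $x$). Collecting these and using that sums of paracompact (resp.\ zero-dimensional, compact-finite, $\mu$-) spaces have the same property completes (i).

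\emph{Part (ii).} By the criterion quoted from \cite[2.4.F(a)]{Eng}, it suffices to show that for every $z\in X$ and every open $U\subseteq\mathcal R(X)$ with $q^{-1}(z)\subseteq U$, the image $q(U)$ is a neighborhood of $z$ in $X$. Now $q^{-1}(z)$ lives inside the summand $\mathcal R_z(X)$ (precisely it is $\{z\}\cup(\{z\}\times\w)$ — wait, more carefully, $q^{-1}(z)$ consists of the point $(z,z)$ and all points $((y,n),x)$ with $q_x((y,n))=y=z$, i.e.\ $x\neq z$ and the column over $z$ in each summand $\mathcal R_x(X)$, together with $(z,z)$). The key point is that $q$ is open \emph{at the single point} $z$ viewed in $\mathcal R_z(X)$: if $U$ contains $(z,z)$ then, by the definition of the topology at $z$, the set $\{z\}\cup\{y\in X:(\{y\}\times\w)\setminus U \text{ finite}\}$ is a neighborhood of $z$ in $X$, and this set is contained in $q(U)$. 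Hence $q(U)$ is a neighborhood of $z$, as required. I would write this out paying attention to the indexing of the summands.

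\emph{Part (iii).} The map $q$ is continuous and surjective, so $q^\ast\colon C_p(X)\to C_p(\mathcal R(X))$, $f\mapsto f\circ q$, is a continuous linear injection, and it is a topological embedding because $q$ is surjective (evaluation at $(z,z)$ on the image recovers evaluation at $z$). It remains to see that the image is closed. The image is exactly the set of $g\in C(\mathcal R(X))$ that are constant on each fibre $q^{-1}(z)$, i.e.\ satisfy $g(z,z)=g((y,n),x)$ whenever $q_x((y,n))=z$; these are intersections of kernels of differences of evaluation functionals, hence closed in $C_p(\mathcal R(X))$. So it remains only to check that a $g$ which is constant on fibres and continuous on $\mathcal R(X)$ pushes down to a \emph{continuous} $f$ on $X$ with $f\circ q=g$ — and this is precisely where hereditary quotientness from (ii) is used: since $q$ is (hereditarily) quotient, a fibrewise-constant continuous $g$ induces a continuous $f$. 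Thus $q^\ast\big(C_p(X)\big)$ equals the closed set of fibrewise-constant functions, proving (iii).

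The main obstacle I anticipate is Part (i): carefully proving that $\mathcal R_x(X)$ is a $\mu$-space (indeed that its functionally bounded sets are finite) and paracompact requires handling the combinatorial definition of neighborhoods of $x$ with some care, in particular constructing, from an infinite functionally bounded set, an unbounded continuous function using the clopen column decomposition. Parts (ii) and (iii) are then short once the hereditary-quotient criterion is invoked.
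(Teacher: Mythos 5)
Your overall strategy matches the paper's: reduce everything to the summands $\mathcal R_x(X)$, use that each has at most one non-isolated point to get zero-dimensionality and paracompactness, prove compact-finiteness by isolating $x$ inside a compact set, deduce (ii) from openness of $q_x$ at the point $x$, and deduce (iii) from the fact that the adjoint of a quotient map is an embedding of $C_p$-spaces onto the closed set of fibrewise-constant functions (which you reprove correctly where the paper simply cites it). Two remarks on the places where your execution drifts from the paper. First, for the $\mu$-space property the paper just invokes paracompact $\Rightarrow$ Dieudonn\'e complete $\Rightarrow$ $\mu$-space, which spares the direct argument that functionally bounded sets are finite; your direct route works, but as sketched it is delicate: the unbounded continuous function must be supported on one selected point from each of infinitely many distinct columns, not on whole columns, because a function that is large on all of $\{y_k\}\times\w$ for infinitely many $y_k$ need not be continuous at $x$ when $X\setminus\{y_k:k\in\w\}$ fails to be a neighborhood of $x$ in $X$. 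Second, your compact-finiteness argument contains a slip: the set $\{x\}\cup\big((X\setminus\{x\})\times\w\big)$ is all of $\mathcal R_x(X)$, so the cover you describe trivially admits a finite subcover. The clean version (and the paper's) is: if $C$ is compact with $x\in C$, then each column $\{y\}\times\w$ is closed and discrete, so it meets $C$ finitely, whence $\{x\}\cup\big(\mathcal R_x(X)\setminus C\big)$ is open by the definition of the neighborhoods of $x$; thus $x$ is isolated in $C$, so $C$ is discrete and compact, hence finite. With these repairs your proof is correct and essentially the paper's.
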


\begin{proof}
(i) Since for every $x\in X$, the space $\mathcal{R}_x(X)$ has only one non-isolated point, it is zero-dimensional and paracompact. Therefore $\mathcal{R}(X)$ is zero-dimensional and paracompact as well. Being paracompact, $\mathcal{R}(X)$ is a $\mu$-space.

To show that $\mathcal{R}(X)$ is compact-finite, it suffices to check that the space $\mathcal{R}_x(X)$  is compact-finite for every $x\in X$. Fix a compact subset $K\subseteq \mathcal{R}_x(X)$. Replacing $K$ by $K\cup\{x\}$, we can assume that $x\in K$. Observe that for every $y\in X\SM\{x\}$ the subspace $\{y\}\times\w$ is closed and discrete in $\mathcal{R}_x(X)$ and hence $(\{y\}\times \w)\cap K$ is finite, being a closed discrete subspace of the compact space $K$. Now the definition of the topology on $\mathcal{R}_x(X)$ ensures that the set $U=\{x\}\cup(\mathcal{R}_x(X)\SM K)$ is a neighborhood of $x$. So, $x$ is an isolated point of $K$ and hence all points of the compact space $K$ are isolated, which implies that $K$ is finite.
\smallskip

(ii) To prove that  the map $q:\mathcal{R}(X)\to X$, $ q:(z,x)\mapsto q_x(z)$,  is  hereditarily quotient, fix an arbitrary $x\in X$ and take any open set $U\subseteq \mathcal{R}(X)$ containing the preimage $q^{-1}(x)$. We should prove that $q(U)$ is a neighborhood of $x$ in $X$. Since $(x,x)\in (\mathcal{R}_x(X)\times\{x\})\cap q^{-1}(x)\subseteq U$, there exists an open neighborhood $V\subseteq \mathcal{R}_x(X)$ of $x$ such that $V\times\{x\}\subseteq U$. Since the map $q_x:\mathcal{R}_x(X)\to X$ is open at $x$, the image $q(U)\supseteq q_x(V)$ is a neighborhood of $x$ in $X$.
\smallskip

(iii) It is well known that if $q: Y\to Z$ is a quotient map, then the adjoint map $q^\ast : C_p(Z)\to C_p(Y)$, $q^\ast:f\mapsto f\circ q$, is an embedding onto a closed subspace. This result and (ii) imply that  $C_p(X)$ is a closed subspace of the space $C_p\big(\mathcal{R}(X)\big)$.
\end{proof}

\begin{example} \label{exa:closed-non-sGP}
Let $K$ be a countable compact space of infinite scattered height. Then:
\begin{enumerate}
\item[{\rm(i)}] $C_p(K)$ is not strongly Gelfand--Phillips;
\item[{\rm(ii)}] $C_p\big(\mathcal{R}(K)\big)$ is a separable metrizable strongly Gelfand--Phillips space;
\item[{\rm(iii)}] $C_p(K)$ is a closed subspace of the space $C_p\big(\mathcal{R}(K)\big)$.
\end{enumerate}
\end{example}

\begin{proof}
The assertion (i) follows from Theorem \ref{t:Banach-C(K)-strong-GP}, and (iii) follows from Theorem \ref{t:T1-quotient-resol}(iii).

(ii) Since $K$ is countable, the space $\mathcal{R}(K)$ is countable as well. Therefore $C_p\big(\mathcal{R}(K)\big)$ is a separable metrizable space. It is strongly Gelfand--Phillips by (i) of Theorem \ref{t:T1-quotient-resol} and Corollary \ref{c:C0-sGP}.
\end{proof}

\begin{proposition}\label{p:4.7}
Every locally convex space $E$ endowed with the weak topology is topologically isomorphic to a closed subspace of some strongly Gelfand--Phillips barrelled locally convex space.
\end{proposition}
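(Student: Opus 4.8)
The plan is to realize $E_w$ as a closed subspace of $C_p\big(\mathcal{R}(Y)\big)$, where $Y:=E'_{w^\ast}$ is the weak$^\ast$ dual of $E$ and $\mathcal{R}(Y)$ is its compact-finite resolution. Since $E'$ separates the points of $E$, the space $Y$ is Hausdorff, hence Tychonoff and in particular $T_1$, so Theorem~\ref{t:T1-quotient-resol} applies to $Y$. By Theorem~\ref{t:T1-quotient-resol}(i), $\mathcal{R}(Y)$ is a paracompact, compact-finite $\mu$-space; as every functionally bounded subset of a $\mu$-space has compact closure and compact subsets of $\mathcal{R}(Y)$ are finite, the space $\mathcal{R}(Y)$ is bounded-finite. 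Hence $C_p\big(\mathcal{R}(Y)\big)$ is strongly Gelfand--Phillips by Corollary~\ref{c:C0-sGP}; moreover, since all compact subsets of $\mathcal{R}(Y)$ are finite we have $\TTT_k=\TTT_p$ on $C\big(\mathcal{R}(Y)\big)$, so $C_p\big(\mathcal{R}(Y)\big)=\CC\big(\mathcal{R}(Y)\big)$ is barrelled by the Nachbin--Shirota Theorem (applicable because $\mathcal{R}(Y)$ is a $\mu$-space). Finally, by Theorem~\ref{t:T1-quotient-resol}(iii), $C_p(Y)$ is a closed subspace of $C_p\big(\mathcal{R}(Y)\big)$. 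Therefore it remains to identify $E_w$ with a closed subspace of $C_p(Y)=C_p(E'_{w^\ast})$.

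For this last step I would use the evaluation map $J\colon E_w\to C_p(E'_{w^\ast})$, $J(x)\colon\chi\mapsto \chi(x)$. It is linear and injective because $E'$ separates the points of $E$, and it is a topological embedding, since for a net $(x_\alpha)$ in $E$ and $x\in E$ one has $x_\alpha\to x$ in $\sigma(E,E')$ if and only if $\chi(x_\alpha)\to\chi(x)$ for every $\chi\in E'$, i.e. if and only if $J(x_\alpha)\to J(x)$ in $C_p(E'_{w^\ast})$. Its image $J(E_w)$ coincides with the set $\Lambda$ of all $\IF$-linear functions belonging to $C_p(E'_{w^\ast})$: the inclusion $J(E_w)\subseteq\Lambda$ is clear, and conversely every continuous $\IF$-linear functional on $E'_{w^\ast}$ belongs, by the standard duality $\big(E'_{w^\ast}\big)'=E$, to $E$ and so is of the form $J(x)$. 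Writing $\delta_\chi\colon f\mapsto f(\chi)$ for the evaluation functional at $\chi\in E'$, which is continuous on $C_p(E'_{w^\ast})$, we get
\[
\Lambda=\bigcap_{\chi,\psi\in E'}\ker\big(\delta_{\chi+\psi}-\delta_\chi-\delta_\psi\big)\cap\bigcap_{\lambda\in\IF,\,\chi\in E'}\ker\big(\delta_{\lambda\chi}-\lambda\delta_\chi\big),
\]
an intersection of closed linear subspaces of $C_p(E'_{w^\ast})$; hence $\Lambda$ is closed. Thus $J$ is a topological isomorphism of $E_w$ onto the closed subspace $\Lambda$ of $C_p(E'_{w^\ast})$, and composing $J$ with the closed embedding $C_p(E'_{w^\ast})\hookrightarrow C_p\big(\mathcal{R}(E'_{w^\ast})\big)$ presents $E_w$ as a closed subspace of the barrelled, strongly Gelfand--Phillips space $C_p\big(\mathcal{R}(E'_{w^\ast})\big)$.

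The only genuinely nonroutine point is the choice of the Tychonoff space carrying a \emph{closed} copy of $E_w$ inside its pointwise function space; the weak$^\ast$ dual $E'_{w^\ast}$ works because linearity is a closed condition in $C_p$ --- it is cut out by the vanishing of continuous combinations of evaluation functionals --- and because weak$^\ast$-continuous linear functionals on $E'$ exhaust $E$. All remaining ingredients (bounded-finiteness, barrelledness and the strong Gelfand--Phillips property of $C_p\big(\mathcal{R}(Y)\big)$, as well as the closedness of $C_p(Y)$ in it) follow immediately from Theorem~\ref{t:T1-quotient-resol}, Corollary~\ref{c:C0-sGP} and the Nachbin--Shirota Theorem.
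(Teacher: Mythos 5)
Your proof is correct and follows essentially the same route as the paper's: embed $E_w$ into $C_p(E'_{w^\ast})$ via the evaluation map and then use Theorem~\ref{t:T1-quotient-resol}, Corollary~\ref{c:C0-sGP} and the Nachbin--Shirota theorem to realize $C_p(E'_{w^\ast})$ as a closed subspace of the barrelled, strongly Gelfand--Phillips space $C_p\big(\mathcal{R}(E'_{w^\ast})\big)$. Your extra verification that the image of the evaluation map is \emph{closed} in $C_p(E'_{w^\ast})$ --- being the set of $\IF$-linear functions, cut out by continuous combinations of evaluation functionals, and coinciding with $E$ by the duality $\big(E'_{w^\ast}\big)'=E$ --- is a detail the paper's proof leaves implicit, and it is precisely what is needed to upgrade ``subspace'' to ``closed subspace'' in the statement.
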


\begin{proof} Let $E'$ be the dual space of $E$, endowed with the weak$^*$ topology. Observe that the map $\delta:E_w\to C_p(E')$ assigning to every $x\in X$ the function $\delta_x:E'\to \IF$, $\delta_x:f\mapsto f(x)$, is an isomorphic topological embedding. By  Theorem~\ref{t:T1-quotient-resol}, the compact-finite resolution $\mathcal R(E')$ of $E'$ is a compact-finite $\mu$-space and the space $C_p(E')$ is isomorphic to a closed linear subspace of $C_p(\mathcal R(E'))$. By Corollary~\ref{c:C0-sGP}, the function space $C_p(\mathcal R(E'))$ is strongly Gelfand-Phillips. Since $\mathcal R(E')$ is a compact-finite $\mu$-space, its function space $C_k(\mathcal R(E'))=C_p(\mathcal R(E'))$ is barrelled by the Nachbin--Sirota Theorem. Therefore, the space $E_w$ is topologically isomorphic to a subspace of the strongly Gelfand--Phillips barreled space $C_k(\mathcal R(E'))=C_p(\mathcal R(E'))$.
\end{proof}

Example \ref{exa:closed-non-sGP} and Proposition~\ref{p:4.7} motivate the question of whether every locally convex space embeds into a locally convex space with the strong Gelfand--Phillips property. This question has a negative answer as the following example shows. Recall that a locally convex space $E$ is {\em injective} in the class of locally convex spaces if $E$ is complemented in every locally convex space that contains $E$ as a subspace. It is well known that the Banach space $\ell_\infty$ is injective, see Theorems 10.1.2 and 10.1.3 in \cite{NaB}.

\begin{example} \label{exa:l-infty-not-sGP}
The Banach space $\ell_\infty$ does not embed into a strongly Gelfand--Phillips space.
\end{example}

\begin{proof}
Suppose for a contradiction that $\ell_\infty$ embeds into a strongly Gelfand--Phillips space $E$.  Since $\ell_\infty$ is injective, $\ell_\infty$ is complemented in $E$. Then, by Propositions \ref{p:beta-emb}(i) and \ref{p:sGP-hereditary}(ii), $\ell_\infty=C(\beta \w)$ is strongly Gelfand--Phillips. But this contradicts Theorem \ref{t:Banach-strong-GP}.
\end{proof}



\newpage

\end{document}